\newdimen\XCoord
\newdimen\TCoord
\definecolor{olive}{rgb}{0.5,0.5,0}
\newcommand{\rev}[1]{{{#1}}}
\newcommand{\union}{\mathop{\bigcup}}
\newcommand{\vL}{\varLambda}
\newcommand{\zout}{z^K_{\mathrm{o}}}
\newcommand{\zin}{z^K_{\mathrm{i}}}
\newcommand{\dist}{\mathop{\mathrm{dist}}}
\newcommand{\diag}{\mathop{\mathrm{diag}}}
\newcommand{\tin}{\tau_{\mathrm{i}}}
\newcommand{\tout}{\tau_{\mathrm{o}}}
\newcommand{\htin}{\hat{\tau}_{\mathrm{i}}}
\newcommand{\htout}{\hat{\tau}_{\mathrm{o}}}
\newcommand{\gi}{g_{\mathrm{i}}}
\newcommand{\go}{g_{\mathrm{o}}}
\newcommand{\hgi}{\hat g_{\mathrm{i}}}
\newcommand{\hgo}{\hat g_{\mathrm{o}}}
\newcommand{\thetao}{\theta_{\!\mathrm{o}}}
\newcommand{\ii}{\hat{\imath}}
\newcommand{\hn}{\hat n}
\newcommand{\hw}{\hat w}
\newcommand{\hx}{{\hat x}}
\newcommand{\htt}{{\hat t}}
\newcommand{\hK}{\hat K}
\newcommand{\hz}{\hat z}
\newcommand{\bW}{\breve W}
\newcommand{\bV}{\breve V}
\newcommand{\bA}{\breve A}
\newcommand{\bD}{\breve D}
\newcommand{\bw}{\breve w}
\newcommand{\bv}{\breve v}
\newcommand{\bu}{\breve u}
\newcommand{\bq}{\breve q}
\newcommand{\bz}{\breve z}
\newcommand{\btin}{\breve{\tau}_{\mathrm{i}}}
\newcommand{\btout}{\breve{\tau}_{\mathrm{o}}}
\newcommand{\rprp}[1]{{#1}^\perp}
\newcommand{\lprp}[1]{\sideset{^\perp}{}{\mathop{#1}}}
\newcommand{\Babuska}{Babu{\v{s}}ka}
\newcommand{\ip}[1]{\langle {#1} \rangle}
\newcommand{\Dd}{\mathscr{D}}
\newcommand{\om}{\varOmega}
\newcommand{\oh}{\varOmega_h}
\def\d{\partial}
\newcommand{\RRR}{\mathbb{R}}
\newcommand{\ZZZ}{\mathbb{Z}}
\newcommand{\RRRmm}{\mathbb{R}^{m\times m}}
\newcommand{\dout}{\d_{\mathrm{o}}}
\newcommand{\din}{\d_{\mathrm{i}}}
\newcommand{\Gout}{\varGamma_{\mathrm{o}}}
\newcommand{\Gin}{\varGamma_{\mathrm{i}}}
\newcommand{\Gio}{\varGamma_{\mathrm{io}}}
\newcommand{\hGio}{\hat\varGamma_{\mathrm{io}}}
\newcommand{\db}{\d_{\mathrm{b}}}
\newcommand{\dc}{\d_{\mathrm{c}}}
\newcommand{\ran}{\mathop{\mathrm{ran}}}
\newcommand{\D}{\mathcal{D}}
\renewcommand{\L}{\mathcal{L}}
\newcommand{\At}{\tilde{A}}
\newcommand{\Wt}{\tilde{W}}
\newcommand{\Vsp}{\rprp{(V^*)}}
\newcommand{\Vp}{\rprp{V}}
\newcommand{\dom}{\mathrm{dom}}
\newtheorem{theorem}{Theorem}[section]
\newtheorem{lemma}[theorem]{Lemma}
\newtheorem{proposition}[theorem]{Proposition}
\theoremstyle{remark}
\newtheorem{remark}[theorem]{Remark}
\newtheorem{definition}[theorem]{Definition}
\numberwithin{equation}{section}
\title[Space-time tent pitching and traces]{
A tent pitching scheme
\\
motivated by Friedrichs theory
}
\author[Gopalakrishnan]{Jay Gopalakrishnan}
\address{PO Box 751, Portland State University, Portland, OR 97207-0751.}
\author[Monk]{Peter Monk}
\address{Department of Mathematics, University of Delaware, Newark, DE.}
\author[Sepulveda]{Paulina Sep{\'u}lveda}
\address{PO Box 751, Portland State University, Portland, OR 97207-0751.}
\thanks{This work was partially supported by the NSF grants 
DMS-1318916 and DMS-1216620 and the AFOSR grant FA9550-12-1-0484.}
\keywords{hyperbolic, wave equation, inflow, trace, space-time element, weak formulation, boundary operator, cone}
\subjclass[2010]{35L50,65M60}
\begin{document}

\begin{abstract}
  Certain Friedrichs systems can be posed on Hilbert spaces normed
  with a graph norm.  Functions in such spaces arising from advective
  problems are found to have traces with a weak continuity property at
  points where the inflow and outflow boundaries meet. Motivated by
  this continuity property, an explicit space-time finite element
  scheme of the tent pitching type, with spaces that conform to the
  continuity property, is designed. Numerical results for a model
  one-dimensional wave propagation problem are presented.

%%% Local Variables:
%%% mode: latex
%%% TeX-master: "tentpitchElsevier"
%%% End:

\end{abstract}

\maketitle

\graphicspath{{./figs/}}

\section{Introduction}

A commonly used approach for constructing numerical methods to solve
time-dependent problems is {based on the method of lines, where a
  discretization of all space derivatives is followed by a
  discretization of time derivatives.  The resulting methods are
  called implicit or explicit depending on whether one can advance in
  time with or without solving a spatially global problem.  The
  study in this paper targets a different class of methods referred to
  as {\em locally implicit} space-time finite element methods, which
  advance in time using calculations that are local within space-time
  regions of simulation. Examples of such methods are provided by
  ``tent pitching'' schemes, which mesh the space-time region using
  tent-shaped subdomains and advance in time by varying amounts at
  different points in space.}

Ideas to advance a numerical solution in time by local operations
  in space time regions were explored even as early
  as~\cite{Oden69}. Recurrence relations on
  multiple slabs of rectangular space-time elements were considered
  in~\cite{Kaczk75}, whose ideas were generalized to non-rectangular
  space-time elements for beams \rev{ and plates} in~\cite{Bajer86}. These works are not
  so related to the current work as some of the more modern
  references.  Closest in ancestry to the method we shall consider is
  found in~\cite{Richt94a} where it was called explicit space-time
  elements. The space-time discontinuous Galerkin (SDG) method was
  announced almost at the same time in~\cite{LowriRoeLeer95} and
  continues to see active
  development~\cite{MilleHaber08,PalanHaberJerra04,YinAcharSobh00}.
  Against this backdrop, we highlight two papers that brought tent
  pitching ideas into the numerical analysis
  community~\cite{FalkRicht99,MonkRicht05}. The questions we choose to
  ask in this work have been heavily influenced by these two works. We
  should note that the name ``tent pitching'' has been traditionally
  used for meshing schemes that advance a space-time
  front~\cite{ErickGuoySulli05,UngorSheff02}, but in this paper tent
  pitching refers to the discretization scheme together with all the
  required meshing.

\begin{figure}     % made using code/figure0.m 
  \begin{tabular}{c|c|c}
    \includegraphics[trim=0in 0in 0in 0.7in,clip,width=0.3\textwidth]{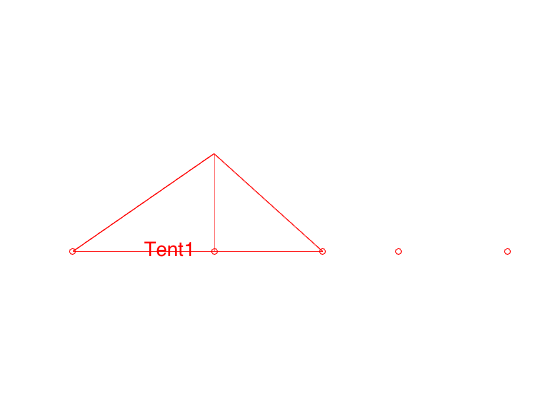}
    & 
    \includegraphics[trim=0in 0in 0in 0.7in,clip,width=0.3\textwidth]{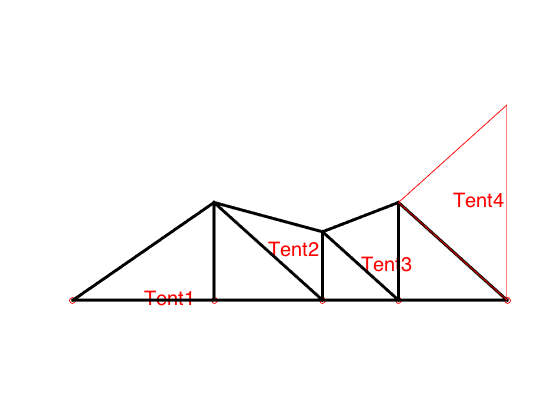}    
    & 
    \includegraphics[trim=0in 0in 0in 0.7in,clip,width=0.3\textwidth]{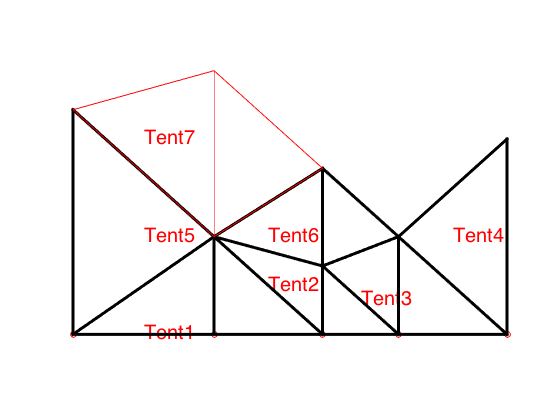}    
    \\
    \includegraphics[trim=0in 0in 0in 0.7in,clip,width=0.3\textwidth]{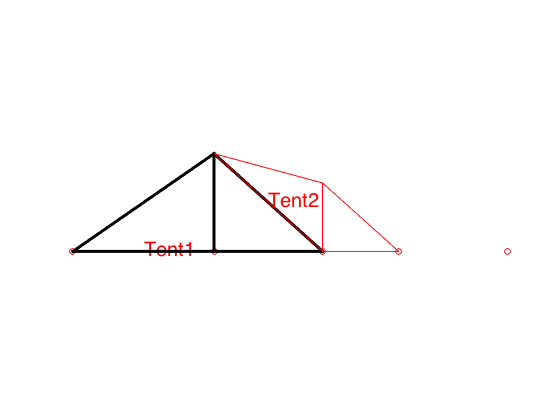}
    & 
    \includegraphics[trim=0in 0in 0in 0.7in,clip,width=0.3\textwidth]{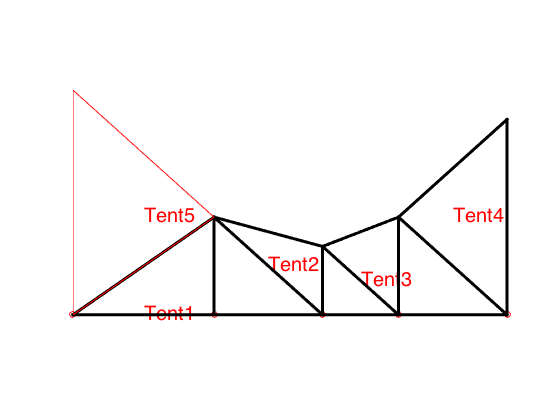}    
    & 
    \includegraphics[trim=0in 0in 0in 0.7in,clip,width=0.3\textwidth]{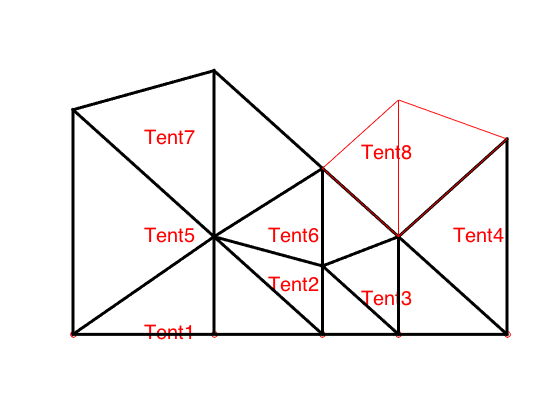}    
    \\
    \includegraphics[trim=0in 0in 0in 0.7in,clip,width=0.3\textwidth]{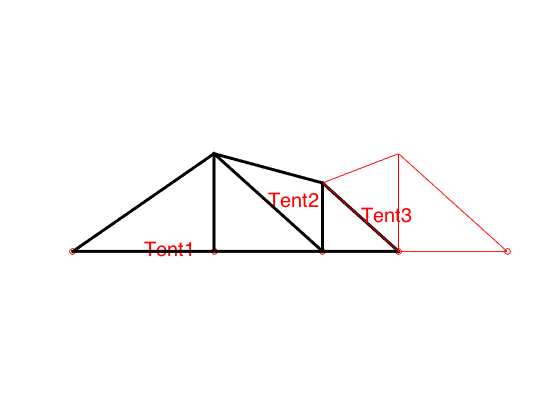}    
    & 
    \includegraphics[trim=0in 0in 0in 0.7in,clip,width=0.3\textwidth]{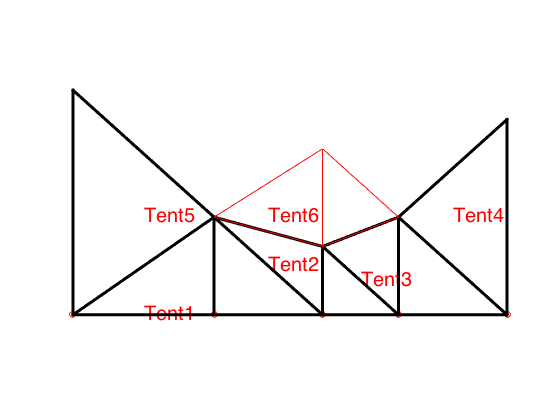}    
    & 
    \includegraphics[trim=0in 0in 0in 0.7in,clip,width=0.3\textwidth]{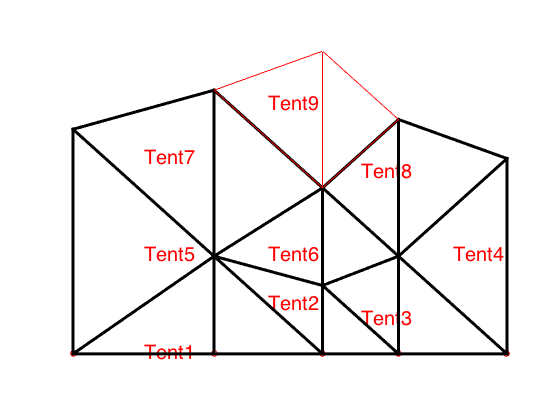}    
  \end{tabular}
  \caption{Tent pitching (read column by column)}
  \label{fig:tents}
\end{figure}

{To give an overview of what is involved in a tent pitching
  scheme}, consider the case of a hyperbolic problem posed in one
space dimension with time as the second dimension. Given a spatial
mesh, we pitch a tent by erecting a tent pole (vertically in time) at
a vertex, as in Figure~\ref{fig:tents}. (Precise definitions of
``tents'' etc.\ appear later -- see Definition~\ref{defn:tent}.)  In
the plots of Figure~\ref{fig:tents}, the horizontal and vertical
dimensions are space and time, respectively.  The height of the tent
pole must be chosen small enough in relation to the hyperbolic
propagation speed, so that the domain of dependence of all points in
the tent remains within the tent's footprint. We then use the given
initial data to solve, by some numerical scheme, the hyperbolic
problem restricted to the tent. Proceeding to the next vertex where
the second tent is pitched in Figure~\ref{fig:tents}, we find that the
initial data combined with the solution in the previous tent, provides
inflow data to solve the hyperbolic problem there. Solution on the
newer tents proceeds similarly. This shows the sense in which tent
pitching schemes are locally implicit: they only involve solving local
problems tent by tent.

{Having explained tent pitching schemes in general, we should now
  emphasize that the main result of this paper is not a new tent
  pitching scheme (although one is included to show relevance).
  Rather, this paper is mainly concerned with answering a few
  theoretical questions {\em motivated} by tent pitching
  schemes. Indeed, our main result is a characterization of traces of
  a Friedrichs space on a tent-shaped domain and builds on the recent
  advances in Friedrichs
  theory~\cite{AntonBuraz10,ErnGuerm06,ErnGuermCapla07,Fried58}.  To
  explain the Friedrichs connection, we should first note that all the
  previous tent pitching schemes} use non-conforming space-time
discontinuous Galerkin discretizations.  Design of tent pitching
methods within a conforming setting, while holding the promise of
locally adaptive time marching with fewer unknowns, pose interesting
questions: What is the weak formulation that the tent pitching scheme
should conform to?  What are the spaces?  What are the finite element
subspaces one should use?  {These questions form the motivation
  for this study and while attempting to answer them, Friedrichs
  spaces and their traces appear naturally, as we shall see.}  While
we are far from answering the above questions for a general Friedrichs
system, our modest aim in this paper is to provide some answers for a
few simple problems in one space dimension.

Accordingly, there are two parts to this paper. {The first and the
  main part of the paper consists of
Sections~\ref{sec:friedrichs}--\ref{sec:examples}. While results of
Sections~\ref{sec:friedrichs} and~\ref{sec:weak-form} are applicable to
any abstract Friedrichs system, Section \ref{sec:examples} focusses
mainly on an advection example and its implications for hyperbolic
systems.  This leads to observations on the traces of certain
Friedrichs spaces. The theory} clarifies a weak continuity property of
the traces at the points where inflow and outflow part of boundaries
(defined precisely later) meet. It is relevant in the tent pitching
context because in the tent-shaped domains used in tent pitching
schemes, inflow and outflow boundaries always meet. The second part of
the paper, consisting of Sections~\ref{sec:wave} and~\ref{sec:scheme},
designs an explicit space-time finite element scheme of the tent
pitching type using the spaces and weak formulations motivated by the
first part. The method we construct is a low order method that works
on unstructured grids.  On uniform grids, comparison with a standard
low order finite difference method does not reveal any striking
advantages for the new method, as we will see in
Section~\ref{sec:numerical}. Yet we hope that this study will pave the
way to a better understanding of conforming tent pitching
discretizations, the spaces involved, and eventually lead to high
order methods on unstructured grids for multidimensional problems. We
begin with some preliminaries on Friedrichs systems in the next
section.

\section{Friedrichs systems} \label{sec:friedrichs}

Our approach is influenced by the modern take on the classical
work of Friedrichs~\cite{Fried58}, as presented
in~\cite{ErnGuerm06,ErnGuerm06a,ErnGuermCapla07}. Let $L$ be a Hilbert
space over $\RRR$ with inner product $(\cdot,\cdot)_L$ and norm $\|
\cdot\|_L$, and let $\D$ be a dense subspace of $L$.  Suppose $A$ and
$\At$ are linear maps from $\D$ into $L$ satisfying
\begin{subequations}
  \label{eq:T}
\begin{gather}
  \label{eq:T1}
  (A\phi,\psi)_L  = (\phi,\At\psi)_L,\qquad  \forall \phi,\psi \in \D,
  \\   \label{eq:T2}
  \exists \; c>0: \quad \| (A + \At) \phi \|_L \le c \| \phi\|_L, \qquad
  \forall \phi \in \D.
\end{gather}
\end{subequations}
Let $W_0$ be the \rev{completion} of $\D$ in the norm $\| \phi \|_W = \left( \|
  \phi\|_L^2 + \| A \phi\|_L^2\right)^{1/2}$. Then, with $L$ as a
pivot Hilbert space, identified to be the same as its dual $L'$, we
have $\D \subseteq W_0 \subseteq L \equiv L' \subseteq W_0'$. It is
now standard to extend $A$ and $\At$ as bounded linear operators from
$W_0$ into $L$, i.e., $A,\At \in \L( W_0,L)$. Moreover, it is also
well-known that Assumption~\eqref{eq:T} implies that they can be further
extended to $A,\At \in \L( L,W_0')$ via
\begin{equation}
  \label{eq:11}
\ip{A \ell, w_0}_{W_0} = (\ell, \At w_0)_L,\qquad
\ip{ \At \ell, w_0}_{W_0} = (\ell, A w_0)_L,
\qquad\forall \ell\in L, \;w_0\in W_0.
\end{equation}
Here and throughout, we use $\ip{\cdot,\cdot}_X$ to denote the duality
pairing in $X$. Next, defining $W = \{ v \in L: A v \in L\}$, we
observe that $W_0\subseteq W$ and that $W$ normed with $\| \cdot\|_W$
defined above is a Hilbert space.  Hereon, the norm on any normed
linear space $X$ will be denoted by $\|\cdot\|_X$.

An important ingredient in Friedrichs theory is the ``boundary''
operator $D$ in $\L(W,W')$ defined by
\begin{equation}
  \label{eq:D}
  \ip{D u,v}_W = (A u,v)_L - (u,\At v)_L  \qquad \forall u,v\in W.
\end{equation}
This is an abstraction of an integration by parts identity. For any
operator $B \in \L(W,W')$, we define $B^* \in \L(W,W')$ by $\ip{B^*
  u,v}_W = \ip{ Bv,u}_W$ for all $u,v\in W$. For subspaces $S \subseteq W$ and $R \subseteq W'$,
define the right and left annihilators by
\begin{align*}
  \rprp S & = \{ w'\in W': \ip{ w',s}_W=0\text{ for all } s\in S\},
  \\
  \lprp R & = \{ w\in W: \ip{ s',w}_W=0\text{ for all }s'\in R\}.
\end{align*}
These results are well known~\cite{ErnGuermCapla07}:

\bigskip

\begin{proposition}
  \label{prop:collect}
  The following are consequences of Assumption~\eqref{eq:T}:
  \begin{enumerate}[(a)]
  \item \label{item:Dsym} $D^* = D$.
  \item \label{item:equiv} The norm $\| w \|_{\Wt} = (\| w\|_L^2 + \|
    \At w \|_L^2)^{1/2}$ {is equivalent to $\| w \|_W$ for all $w \in W$.}
  \item\label{item:ker} $\ker D = W_0$.
  \item\label{item:ran} $\ran D = W_0^\perp$.
  \end{enumerate}
\end{proposition}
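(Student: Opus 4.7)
My plan is to handle the four items in sequence, leveraging the adjoint identities \eqref{eq:T1} and \eqref{eq:11} together with one preliminary observation: the sum $A+\At$ extends to a \emph{bounded, self-adjoint} operator $\Sigma \in \L(L,L)$. Indeed, \eqref{eq:T1} applied with the roles of $\phi$ and $\psi$ swapped makes $A+\At$ symmetric on $\D$, and the uniform bound \eqref{eq:T2} combined with density of $\D$ in $L$ extends it by continuity to a bounded symmetric (hence self-adjoint) $\Sigma$ on all of $L$.

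With $\Sigma$ in hand, items (a) and (b) are short. For (a), rewriting $Au = \Sigma u - \At u$ and $\At v = \Sigma v - Av$ in the definition \eqref{eq:D} and cancelling the symmetric piece via $(\Sigma u,v)_L = (u,\Sigma v)_L$ turns $\ip{Du,v}_W = (Au,v)_L - (u,\At v)_L$ into $(u,Av)_L - (\At u,v)_L$, which is $\ip{Dv,u}_W$ by $L$-symmetry; so $D^*=D$. For (b), the identity $\At w = \Sigma w - Aw$ and the triangle inequality give $\|\At w\|_L \le \|\Sigma\|\,\|w\|_L + \|Aw\|_L$, hence $\|w\|_{\Wt} \lesssim \|w\|_W$; interchanging the roles of $A$ and $\At$ yields the reverse estimate.

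For (c), the inclusion $W_0 \subseteq \ker D$ is straightforward from \eqref{eq:11}. Read with $\ell \in W$ and $w_0 \in W_0$, both $A\ell$ and $\At w_0$ lie in $L$ (the second by (b) applied to $w_0$), so the duality pairings in \eqref{eq:11} collapse to $L$-inner products, giving $(A\ell,w_0)_L = (\ell,\At w_0)_L$ and $(\At\ell,w_0)_L = (\ell,Aw_0)_L$. Substituting these into \eqref{eq:D} with first argument $w_0 \in W_0$ makes $\ip{Dw_0,v}_W = 0$ for every $v\in W$, proving $W_0 \subseteq \ker D$. The opposite inclusion $\ker D \subseteq W_0$ I would defer and obtain as a byproduct of (d).

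For (d), the inclusion $\ran D \subseteq \rprp{W_0}$ is immediate from the half of (c) just proved together with $D=D^*$: $\ip{Du,w_0}_W = \ip{Dw_0,u}_W = 0$ for $u\in W, w_0\in W_0$. The heart of the matter is the opposite inclusion, which I would attack \emph{simultaneously} with the remaining half of (c) by establishing a single inf-sup-type lower bound
\[
  \|Du\|_{W'} \;\gtrsim\; \dist_W(u,W_0) \qquad \forall\,u\in W.
\]
Such a bound settles everything at once: its right-hand side vanishes only for $u\in W_0$, giving $\ker D \subseteq W_0$ and completing (c); it forces $\ran D$ to be closed in $W'$ by a standard Cauchy-sequence-modulo-$W_0$ argument; and then the identity $\overline{\ran D} = \rprp{(\ker D^*)} = \rprp{(\ker D)} = \rprp{W_0}$ (valid because $D=D^*$ and $W_0$ is closed in $W$) upgrades to equality. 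I expect the inf-sup estimate to be the real obstacle, where more of the Friedrichs structure must be used than just the self-adjointness of $\Sigma$; the verifications in (a), (b), and the easy halves of (c)--(d) are essentially bookkeeping around $\Sigma$ and \eqref{eq:11}.
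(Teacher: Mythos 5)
The paper itself offers no proof of this proposition (it is quoted as known, with a citation to Ern--Guermond--Caplain), so your argument has to stand entirely on its own. The easy portions are in order: your observation that $A+\At$ extends to a bounded self-adjoint $\Sigma$ on $L$, the resulting proofs of (a) and (b), and the inclusion $W_0\subseteq\ker D$ are all correct, modulo the routine verification (which you gloss) that the identity $A\ell+\At\ell=\Sigma\ell$ persists for the \emph{extended} operators, i.e.\ holds in $W_0'$ for every $\ell\in L$, by \eqref{eq:11} and continuity from $\D$ to $W_0$.

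The genuine gap is the core of (c)--(d). The reverse inclusion $\ker D\subseteq W_0$ and the surjectivity $\ran D\supseteq \rprp{W_0}$ are the only nontrivial content of the proposition, and you reduce them to the bound $\|Du\|_{W'}\gtrsim\dist(u,W_0)$, which you then explicitly leave unproved as ``the real obstacle.'' Given the parts you did prove, that bound is essentially equivalent to the conclusion itself (if $\ker D=W_0$ and $\ran D=\rprp{W_0}$, the induced map $W/W_0\to\rprp{W_0}$ is a bounded bijection, hence bounded below by the open mapping theorem; conversely the bound yields both facts), so proposing it without a proof strategy is circular as a plan. The standard way to close the gap runs in the opposite direction and is constructive: given $g\in\rprp{W_0}$, let $z\in W$ be its Riesz representative, $(z,v)_L+(Az,Av)_L=\ip{g,v}_W$ for all $v\in W$. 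Testing with $v\in\D$ and using \eqref{eq:11} and density of $\D$ in $W_0$ gives $\At(Az)=-z$ in $W_0'$; hence $Az$ lies in the graph space of $\At$, which by (the proof of) (b) is $W$. Then, using (a), $\ip{D(Az),v}_W=\ip{Dv,Az}_W=(Av,Az)_L+(v,z)_L=\ip{g,v}_W$, so $D(Az)=g$ and $\ran D=\rprp{W_0}$. Finally $\ker D\subseteq{}^{\perp}(\ran D)={}^{\perp}(\rprp{W_0})=W_0$ completes (c), and your quantitative estimate, if wanted, follows as a corollary rather than serving as an ingredient. (An equivalent route: view $(A,\D)$ as an unbounded operator on $L$; its adjoint is $(\At,W)$, and von Neumann's biadjoint-equals-closure theorem gives $(\At,W)^*=(A,W_0)$, which is precisely $\ker D=W_0$.)
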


We will henceforth tacitly assume~\eqref{eq:T} throughout this
section.  In the traditional Friedrichs theory, another ``boundary
operator'' $M$, also in $\L(W,W')$, plays a leading role. This is a
generalization of certain matrices used by Friedrichs~\cite{Fried58}
to impose boundary conditions.  In the generalization of Friedrichs
theory to the Hilbert space setting, as described
in~\cite{ErnGuermCapla07}, the operator $M$ is assumed to satisfy
\begin{subequations}
  \label{eq:M}
  \begin{gather}
    \ip{ M w, w}_W \ge 0, \quad \forall w\in W,
    \\
    W  = \ker(D-M) + \ker (D+M).
  \end{gather}
\end{subequations}

The theory in~\cite{ErnGuermCapla07} addresses the unique
solvability of two problems: The first is to find a $u\in W,$ given
any $f\in L$, satisfying $ A u = f$ (typically a partial differential
equation), and $ (D-M)u = 0$ (typically a boundary condition).
% \begin{align*}
%    A u & = f && \text{(typically a partial differential equation),} \\
%    (D-M)u & = 0 && \text{(typically a boundary condition)}.
% \end{align*}
The second problem is the ``dual'' problem of solving $\At u = f$ satisfying
$(D+M^*)u=0$.  These two problems are uniquely solvable if and only if
the following two conditions hold, respectively:
\begin{subequations}
  \label{eq:A}
  \begin{align}
    \label{eq:A1}
    A &: \ker(D-M) \to L\;  \text{ is a bijection, and }
    \\
    \label{eq:A2}
    \At & : \ker(D+M^*) \to L\; \text{ is a bijection}.
  \end{align}
\end{subequations}
Some sufficient conditions for~\eqref{eq:A} to hold can be found
in~\cite{ErnGuerm06,ErnGuermCapla07}.

In~\cite{ErnGuermCapla07}, an intrinsic approach without the operator
$M$ was discovered. It uses the double cones
\begin{align*}
  C^+ & = \{ w \in W: \ip{ D w,w}_W \ge 0 \},
  \\
  C^- & = \{ w \in W: \ip{ D w,w}_W \le 0 \}.
\end{align*}
The intrinsic approach replaces~\eqref{eq:M} by the following
assumption on two subspaces of $W$ denoted by $V$ and $V^*$:
\begin{subequations}
  \label{eq:V}
  \begin{align}
    \label{eq:V1}
    V & \subseteq C^+, && V^* \subseteq C^-,
    \\
    \label{eq:V2}
    V & = \lprp{D(V^*)}, && V^* = \lprp{D(V)}.
  \end{align}
\end{subequations}
Clearly, \eqref{eq:V2} implies that both $V$ and $V^*$ are closed, and moreover,
\begin{equation}
  \label{eq:3}
  \ker D = W_0 \subseteq V \cap V^*.
\end{equation}
Note that the reflexivity of Hilbert spaces
and~\eqref{eq:V2} imply that
\begin{gather}
\label{eq:10}
\rprp{(V^*)}   = \rprp{ (\lprp{D(V)})} = D(V), \\
\nonumber
\rprp V       = \rprp{ (\lprp{D(V^*)} )} = D(V^*).
\end{gather}

The theory in~\cite{ErnGuermCapla07} provides sufficient conditions
for unique solvability of two problems: The first is to find a $u\in
W,$ given any $f\in L$, satisfying
\begin{subequations}
  \label{eq:Au=f}
\begin{align}
  \label{eq:Au=f:pde}
   A u & = f && \text{(typically a partial differential equation),} \\
   \label{eq:Au=f:bc}
   u & \in V && \text{(typically a boundary condition)}.
\end{align}
\end{subequations}
The second is the ``dual'' problem of solving for a $u \in V^*$
satisfying $\At u = f$.  These two problems are uniquely solvable if
and only if
\begin{subequations}
  \label{eq:A-V}
  \begin{align}
    \label{eq:A1-V}
    A &: V \to L\;  \text{ is a bijection, and }
    \\
    \label{eq:A2-V}
    \At & : V^*\to L\; \text{ is a bijection}.
  \end{align}
\end{subequations}
\rev{A coercivity condition on $A + \At$ is sufficient
  for~\eqref{eq:A-V} to hold, as proved
  in~\cite{ErnGuermCapla07}. However, for operators like the transient
  wave operator considered later, $A + \At$ is zero and cannot be
  coercive.}

\rev{Hence, our first point of departure from~\cite{ErnGuermCapla07}
  is the introduction of another simple sufficient condition for
  unique solvability.  It requires that the operator $A$ be bounded
  from below, a condition which is often easy to verify for
  time-dependent problems (see e.g.~\cite{WieneWohlm13}). Although the
  next theorem requires both $A$ and $\At$ to be bounded below, one of
  these conditions can be easily removed in most applications, as
  detailed in Remark~\ref{rem:closed}.}

\begin{theorem}
  \label{thm:bddbelow}
  Suppose~\eqref{eq:T} and~\eqref{eq:V} hold.  If there is a constant  $c>0$
  such that $A : V \to L$ % and $\At : V^* \to L$
  satisfies
  \begin{subequations}
    \label{eq:AtAbelow}
    \begin{align}
      \label{eq:Abdbelow}
      \| A u \|_L & \ge c \| u \|_W  && \forall u \in V,
      \text{ and}
      \\
      \| \At u \|_L & \ge c \| u \|_W  && \forall u \in V^*,
      \label{eq:Atbdbelow}
    \end{align}
  \end{subequations}
    % \begin{align}
    % \label{eq:Abdbelow}
    %   \rev{\| A u \|_L }
    %   & \ge c \| u \|_W  && \forall u \in V,
    % \end{align}
  then~\eqref{eq:A-V} holds.
\end{theorem}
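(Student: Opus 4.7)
The plan is to establish the bijectivity of $A:V\to L$ stated in~\eqref{eq:A1-V}; since the hypotheses~\eqref{eq:T}, \eqref{eq:V} and~\eqref{eq:AtAbelow} are all symmetric under the interchange $A\leftrightarrow\At$, $V\leftrightarrow V^*$, the bijectivity~\eqref{eq:A2-V} of $\At:V^*\to L$ will follow by applying the same argument with the roles reversed. The lower bound~\eqref{eq:Abdbelow} immediately gives injectivity of $A|_V$; together with the fact that $V$ is closed in $W$ (from~\eqref{eq:V2}), it makes $A|_V$ a bounded-below continuous linear map between Hilbert spaces, so its range is automatically closed in $L$. The only remaining task is therefore to show that this range is dense in $L$.

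For density, I will pick any $g\in L$ with $(Au,g)_L=0$ for all $u\in V$ and argue $g=0$; the idea is to upgrade $g$ step by step until it sits in $V^*$ with $\At g=0$, at which point~\eqref{eq:Atbdbelow} forces it to vanish. First, since $W_0\subseteq V$ by~\eqref{eq:3}, the orthogonality in particular holds on $W_0$, and the extension formula~\eqref{eq:11} rewrites it as $\ip{\At g,u}_{W_0}=0$ for every $u\in W_0$; thus $\At g=0$ as an element of $W_0'$. Reading this as $\At g\in L$ (equal to the zero element), the $\At$-analog of Proposition~\ref{prop:collect}(b) lets me conclude $g\in W$. Once $g\in W$, the definition~\eqref{eq:D} of $D$ yields, for every $u\in V$,
\[
\ip{Du,g}_W=(Au,g)_L-(u,\At g)_L=0-0=0,
\]
so $g\in\lprp{D(V)}=V^*$ by~\eqref{eq:V2}. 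Applying~\eqref{eq:Atbdbelow} to this $g\in V^*$ with $\At g=0$ then gives $\|g\|_W=0$, and hence $g=0$.

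The main point requiring care is the inference ``$\At g=0$ in $W_0'$ implies $g\in W$'': since $W$ is defined in the excerpt through $A$ rather than through $\At$, this is not immediate, and I expect to invoke the symmetric counterpart of Proposition~\ref{prop:collect}(b), which identifies $W$ with $\{v\in L:\At v\in L\}$ and holds because the defining assumption~\eqref{eq:T} is itself symmetric in $A$ and $\At$. The remaining steps are purely formal, relying only on the density of $\D$ in $W_0$, the extension identity~\eqref{eq:11}, the definition of $D$ in~\eqref{eq:D}, and the annihilator identities in~\eqref{eq:V2}. Running the same argument with the roles of $A,V$ and $\At,V^*$ swapped then delivers~\eqref{eq:A2-V} and completes the plan.
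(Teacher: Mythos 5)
Your proposal is correct and follows essentially the same route as the paper's proof: injectivity and closed range of $A|_V$ from~\eqref{eq:Abdbelow}, then reduction to showing that any $g\in L$ orthogonal to $A(V)$ vanishes, which you do exactly as the paper does — via~\eqref{eq:11} and density to get $\At g=0$ and $g\in W$, then~\eqref{eq:D} and $\lprp{D(V)}=V^*$ to place $g$ in $V^*$, and finally~\eqref{eq:Atbdbelow} to kill it, with the symmetric argument giving~\eqref{eq:A2-V}. Your explicit appeal to the $\At$-characterization of $W$ (the counterpart of Proposition~\ref{prop:collect}(b)) just spells out a step the paper leaves implicit.
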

\begin{proof}
Inequality~\eqref{eq:Abdbelow} implies that $A: V \to L $ is
  injective and has closed range. Hence $A$ is a bijection if its
  adjoint is injective, i.e., if
  \begin{equation}
    \label{eq:adjinj}
    \{ \ell \in L: \;
    (Av,\ell)_L =0 \text{ for all }
    v \in
    V\} = \{0 \}.
  \end{equation}
  To prove~\eqref{eq:adjinj}, consider an $\ell$ satisfying
  $(Av,\ell)_L =0$ for all $v \in V$. Then, for all $w_0\in W_0
  \subseteq V$, we have by~\eqref{eq:11} that $\ip{ \At
    \ell,w_0}_{W_0}=0$, from which it follows, by the density of $W_0$
  in $L$, that $\At \ell =0$ and $\ell \in W.$ Hence we may
  apply~\eqref{eq:D}, which yields
  \begin{equation}
    \label{eq:1}
    \ip{ D v, \ell}_W = (Av, \ell)_L - (v,\At \ell)_L=0, \quad \forall v \in V.
  \end{equation}
  Thus $\ell \in \lprp{D(V)} = V^*$, and so~\eqref{eq:Atbdbelow} implies
  $\ell=0$. This proves~\eqref{eq:adjinj}.

  That $\At$ is a bijection is proved similarly.
\end{proof}

\begin{remark} \label{rem:closed} %
  \rev{Note that under the assumptions of Theorem~\ref{thm:bddbelow},
    if~\eqref{eq:Abdbelow} holds and $\At$ is injective,
    then~\eqref{eq:Atbdbelow} holds with the same constant $c$.  This
    is most easily seen by viewing $A$ as a closed (possibly
    unbounded) operator on $L$ with $\dom(A) = V$.  From~\eqref{eq:3},
    we know that $\D \subset W_0 \subset V \subset L$. Since $\D$ is
    dense in $L$, the domain of $A$ is dense in $L$. Hence the adjoint
    $A'$ is a well defined closed operator on $L$ satisfying
    $(Av,s)_L = (v,A's)_L $ for all $v \in \dom(A)$ and
    $s \in \dom(A')$.  By definition, $\dom(A')$ consists of all
    $s \in L$ for which there exists an $\ell \in L$ with the property
    $(s,A v)_L=(\ell,v)_L$ for all $v \in \dom(A)=V$. In particular,
    whenever $s \in \dom(A')$, by~\eqref{eq:11},
    $(s,A w_0)_L= \ip{ \At s, w_0}_{W_0} = (\ell, w_0)_L$ for all
    $w_0 \in W_0 \subset V$, so $\At s = \ell$ and consequently
    $s \in W$. Thus, whenever $s \in \dom(A')$, both $(s, Av)_L$ and
    $(\At s, v)_L$ coincide with $(\ell,v)_L$ for all $v \in V$, which
    by~\eqref{eq:D}, implies that $\ip{D v, s}_W =0 $, which in turn
    implies that $ s \in V^* = \lprp{D(V)}$ because of~\eqref{eq:V},  i.e.,
    $\dom(A') \subseteq V^*$. Combining with the easily provable reverse
    inclusion, $\dom (A') = V^*$. Next, we claim that
    $A'$ is the same as $\At$: Indeed,
    $ (Av, s)_L - (v,A's)_L = \ip{ D v, s}_W = 0$ for all
    $ v \in \dom(A)=V$ and $s \in \dom(A')=V^*=\lprp{D(V)}$, thus
    proving the claim.  Now, since~\eqref{eq:Abdbelow} implies that
    the $\ran(A)$ is closed, by the Closed Range Theorem for closed
    operators~\cite{Kato95}, we conclude that $\ran(A') = \ran(\At)$
    is closed. Hence if $\At$ is also injective, then by standard
    arguments,~\eqref{eq:Atbdbelow} follows.}

\end{remark}

To summarize, we have discussed two known approaches to abstract Friedrichs
systems and introduced a new sufficient condition for unique
  solvability of Friedrichs problems.  The first approach 
via~\eqref{eq:M} is closer to the classical theory 
(the $M$-approach) while the second is the approach
via~\eqref{eq:V} (the $V$-approach). Whether these two approaches are
equivalent is a natural question. It was shown
in~\cite{ErnGuermCapla07} that if an operator $M$ exists that
satisfies~\eqref{eq:M}, then $V=\ker(D-M)$ and $V^*=\ker(D+M^*)$
satisfies~\eqref{eq:V}. The converse remained unknown until it was
proven in~\cite{AntonBuraz10}. In the remainder of this paper, we will
use only the $V$-approach.

\section{A weak formulation with boundary fluxes}  \label{sec:weak-form}

Consider the following abstract boundary value problem: Given $f\in L$
and $g \in W$, find $u \in W$ satisfying
\begin{subequations}
  \label{eq:bvp-V}
  \begin{align}
    \label{eq:bvp1-V}
    A u & = f, \\
    \label{eq:bvp2-V}
    u-g & \in V.
  \end{align}
\end{subequations}
Space-time Friedrichs systems with non-homogeneous conditions on
space-time boundaries (which includes initial conditions) can be
abstracted into this form.

To derive a  weak formulation, we multiply~\eqref{eq:bvp1-V} by a
test function $v \in W$ and use~\eqref{eq:D}, to obtain
$
(u,\At v)_L + \ip{ D u,v}_W = (f,v)_L.
$
This implies 
\begin{equation}
  \label{eq:2}
(u,\At v)_L + \ip{ D(u-g),v}_W = F(v),
\end{equation}
where
\begin{equation}
  \label{eq:F-V}
  F(v) =(f,v)_L - \ip{D g,v}_W.
\end{equation}
Now, we let $D(u-g)$ in~\eqref{eq:2} be an independent ``flux''
variable $q$. This leads us to formulate the following variational
problem:
\begin{equation}
  \label{eq:uwp-V}
  \begin{aligned}
    \text{\em Find $u\in L$} &\text{ \em and  $q \in \Vsp$ such that}
    \\
    \qquad &(u,\At v)_L + \ip{ q,v }_W = F(v),\qquad \forall v \in W.
  \end{aligned}
\end{equation}
The bilinear form on the left hand side will be denoted by
$b((u,q),v)$. Our approach to the construction and analysis of this
weak formulation is close (but not identical) to the approach 
in~\cite{Bui-TDemkoGhatt13}. % (but unfortunately
% there is an error in their proof)

A similar derivation for the adjoint problem
of finding a $\tilde u \in W$, given $f\in L$ and $g \in W$, such that
\begin{subequations}
  \label{eq:bvp-V-adj}
  \begin{align}
    \label{eq:bvp1-V-adj}
    \At \tilde u & = f, \\
    \label{eq:bvp2-V-adj}
    \tilde u-g & \in V^*,
  \end{align}
\end{subequations}
suggests the following dual weak formulation:
\begin{equation}
  \label{eq:uwp-V-adj}
  \begin{aligned}
    \text{\em Find $\tilde u\in L$} &\text{ \em and $\tilde q \in
      \Vp$ such that}
    \\
    \qquad &(\tilde u, A v)_L - \ip{ \tilde q,v }_W = \tilde F(v),\qquad
    \forall v \in W,
  \end{aligned}
\end{equation}
where
\begin{equation}
  \label{eq:Ft-V}
  \tilde F(v) = (f,v)_L + \ip{ D g,v}_W.
\end{equation}
The bilinear form on the left hand side will now be denoted by $\tilde
b((\tilde u,\tilde q),v)$.

In applications, the $\ip{\cdot,\cdot}_W$ terms can typically be
identified as boundary terms, so $q$ and $\tilde q$ can be interpreted
as boundary fluxes.  Finally, note that by virtue of~\eqref{eq:10}, we
can equivalently use $D(V)$ and $D(V^*)$ as the flux spaces
in~\eqref{eq:uwp-V} and~\eqref{eq:uwp-V-adj}, respectively.

\subsection{Wellposedness}

Next, we prove that the new weak formulation is well posed and is
equivalent to the classical formulation~\eqref{eq:bvp-V} in the
following sense.

\begin{theorem}
  \label{thm:wellposedness-V}
  Suppose~\eqref{eq:V} and~\eqref{eq:A-V} hold.
  Then the following statements hold:
  \begin{enumerate}[(a)]
  \item Given any $F\in W'$, there is a unique $(u,q)\in L \times \Vsp$
    that solves~\eqref{eq:uwp-V}.  Moreover, if $F$ is as
    in~\eqref{eq:F-V} for some given $f\in L$ and $g\in W$, then the
    solution $(u,q)$ of~\eqref{eq:uwp-V} satisfies
    \begin{equation}
      \label{eq:18}
    Au = f,\qquad
    u-g \in V, \qquad
    q = D(u-g).
    \end{equation}

 \item Given any $\tilde F\in W'$, there is a unique $(\tilde u,\tilde
  q)\in L \times \Vp$ that solves~\eqref{eq:uwp-V-adj}.  Moreover,
  if $\tilde F$ is as in~\eqref{eq:Ft-V} for some given $f\in L$ and $g\in W$,
  then the solution $(\tilde u,\tilde q)$ of~\eqref{eq:uwp-V-adj} satisfies
  \[
  \At \tilde u = f,\qquad
  \tilde u-g \in V^*,\qquad
  \tilde q = D(\tilde u - g).
  \]
  \end{enumerate}
\end{theorem}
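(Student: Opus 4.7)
My plan is to reduce part (a) to the statement that the linear map $\Phi: L \times \Vsp \to W'$ defined by $\Phi(u,q)(v) = (u,\At v)_L + \ip{q,v}_W$ is a bijection between Banach spaces; continuous dependence on $F$ then comes for free from the Open Mapping Theorem. The characterization in~\eqref{eq:18} will be proved as a separate consequence once the weak equation is in hand, and part (b) will be handled by the mirror-image argument.

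The central observation powering both injectivity and surjectivity of $\Phi$ is that testing with $v \in V^*$ annihilates $q$ (since $q \in \Vsp$) and reduces the weak equation to one involving $\At$ restricted to $V^*$, which is a bijection onto $L$ by~\eqref{eq:A2-V}. For injectivity I would set $\Phi(u,q)=0$, test against all $v \in V^*$ to force $u=0$, and then test against all of $W$ to force $q=0$. For surjectivity, given $F \in W'$, I would compose with the bounded inverse of $\At|_{V^*}$ to produce a bounded functional on $L$ and invoke Riesz to extract $u \in L$ with $F(v) = (u,\At v)_L$ on $V^*$; the residual $q(v) := F(v) - (u,\At v)_L$ is then a bounded functional on $W$ which vanishes on $V^*$, hence lies in $\Vsp$, and the weak equation holds by construction.

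For the strong-form characterization when $F$ is built from data $f,g$ as in~\eqref{eq:F-V}, I would first test with $w_0 \in W_0 \subseteq V^*$: by Proposition~\ref{prop:collect}(a),(c) both $\ip{q,w_0}_W$ and $\ip{Dg,w_0}_W$ vanish, and combining~\eqref{eq:11} with the injectivity of the embedding $L \hookrightarrow W_0'$ promotes the resulting identity $(u,\At w_0)_L = (f,w_0)_L$ to $u \in W$ with $Au=f$ in $L$. Then, for an arbitrary $v \in W$, the integration-by-parts identity~\eqref{eq:D} lets me rewrite $(u,\At v)_L = (f,v)_L - \ip{Du,v}_W$, and substituting into the weak equation forces $\ip{q,v}_W = \ip{D(u-g),v}_W$ for every $v \in W$, so $q = D(u-g)$. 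Finally, the identity $\Vsp = D(V)$ from~\eqref{eq:10} places $D(u-g)$ in $D(V)$, and since $\ker D = W_0 \subseteq V$, a one-line argument upgrades this to $u - g \in V$.

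Part (b) is handled by the same template with $(A,V)$ and $(\At,V^*)$ interchanged throughout, invoking~\eqref{eq:A1-V} and the companion identity $\Vp = D(V^*)$ in place of their counterparts. The step I expect to be most delicate is the promotion of the weak identity $(u,\At w_0)_L = (f,w_0)_L$ on $W_0$ to the strong statement that $u \in W$ and $Au = f$ in $L$; this requires carefully chaining the pivot identification $L \equiv L'$ with the extension $A \in \L(L,W_0')$ built in~\eqref{eq:11}. Everything else is routine once the two-step test-function strategy is adopted: test in $V^*$ to eliminate $q$, and test in $W_0$ to recover the PDE.
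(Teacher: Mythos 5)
Your argument is correct, but it reaches the conclusion by a genuinely different route than the paper. The paper proves part (a) by verifying the Babu\v{s}ka--Brezzi conditions: a uniqueness lemma and an inf-sup estimate for $b$, both of which rest on the bijectivity of $A:V\to L$ (i.e.\ on~\eqref{eq:A1-V}), the inf-sup constant being produced by solving $Aw=v$, $w\in V$ and taking the trial pair $(w+\At v, Dw)$. You instead prove directly that the operator $(u,q)\mapsto (u,\At\,\cdot\,)_L+\ip{q,\cdot}_W$ is a bijection of $L\times\Vsp$ onto $W'$, and both halves of your argument lean on~\eqref{eq:A2-V}: injectivity because $\At(V^*)=L$ forces $u=0$ after testing in $V^*$, and surjectivity by composing $F$ with the bounded inverse of $\At|_{V^*}$ (bounded inverse theorem on the closed subspace $V^*$), applying Riesz to get $u$, and observing that the residual functional annihilates $V^*$, hence lies in $\Vsp=\rprp{(V^*)}$ by definition. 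So the paper's part (a) uses only~\eqref{eq:A1-V} while yours uses only~\eqref{eq:A2-V}; under the theorem's hypotheses both are available, and your soft argument (Open Mapping Theorem for stability) is shorter, whereas the paper's inf-sup lemma yields an explicit stability constant and sets up the Petrov--Galerkin framework reused for the discrete scheme later. Your derivation of~\eqref{eq:18} matches the paper's up to the last step: the paper gets $u-g\in V$ from $0=\ip{q,v^*}_W=\ip{Dv^*,u-g}_W$ and $V=\lprp{D(V^*)}$, while you write $q=D(u-g)\in D(V)=\Vsp$, pick $z\in V$ with $Dz=q$, and use $\ker D=W_0\subseteq V$; both are valid. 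Two small touch-ups: the vanishing of $\ip{q,w_0}_W$ for $w_0\in W_0$ follows from $W_0\subseteq V^*$ (i.e.~\eqref{eq:3}), not from Proposition~\ref{prop:collect}(a),(c) alone (those handle $\ip{Dg,w_0}_W$), and the boundedness of $v\mapsto(u,\At v)_L$ on $W$, needed for your residual to lie in $W'$, should be credited to Proposition~\ref{prop:collect}(\ref{item:equiv}).
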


To prove this theorem, we will verify a uniqueness and an inf-sup
condition in the following lemmas.

\begin{lemma}[Uniqueness]
  \label{lem:uniqueness-V}
  Suppose~\eqref{eq:V} and~\eqref{eq:A1-V} hold. Then, whenever $u\in L$
  and $q\in \Vsp$ satisfies $b((u,q),v)=0$ for all $v \in W$, we have
  $(u,q)=0$.
\end{lemma}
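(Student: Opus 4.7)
The plan is to peel off information from the equation $b((u,q),v)=0$ in two stages: first by restricting $v$ to $W_0$ to promote $u$ from $L$ into $W$, and then using test functions from all of $W$ to identify $q$ with $Du$ and force $u\in V$, where the injectivity clause of~\eqref{eq:A1-V} finishes the job.

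First I would test with $v\in W_0$. Since $W_0=\ker D\subseteq V\cap V^*$ by~\eqref{eq:3}, and since $q\in\Vsp = \rprp{(V^*)}$, the pairing $\ip{q,v}_W$ vanishes on $W_0$. Thus
\[
(u,\At v)_L = 0,\qquad \forall v\in W_0.
\]
Using the extension identity~\eqref{eq:11}, this says $\ip{Au,v}_{W_0}=0$ for all $v\in W_0$, i.e.\ $Au=0$ in $W_0'$. Via the inclusion $L\hookrightarrow W_0'$ we conclude $u\in W$ and $Au=0$ in $L$.

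Now that $u\in W$, I can invoke the boundary operator identity~\eqref{eq:D}: for every $v\in W$,
\[
\ip{Du,v}_W = (Au,v)_L - (u,\At v)_L = -(u,\At v)_L.
\]
Substituting this into the hypothesis $b((u,q),v)=0$ yields $\ip{q-Du,v}_W=0$ for all $v\in W$, so $q=Du$ in $W'$. Consequently $Du=q\in\rprp{(V^*)}$, which, combined with the self-adjointness $D^*=D$ from Proposition~\ref{prop:collect}\eqref{item:Dsym}, gives $\ip{Dv^*,u}_W=\ip{Du,v^*}_W=0$ for every $v^*\in V^*$. By~\eqref{eq:V2} this is exactly the statement $u\in\lprp{D(V^*)}=V$.

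At this point $u\in V$ and $Au=0$, so the injectivity half of~\eqref{eq:A1-V} forces $u=0$, and then $q=Du=0$. The only potential obstacle is bookkeeping around the extension of $A$ from $W_0\to L$ to $L\to W_0'$ in the first step, and making sure that $q\in\Vsp$ really does annihilate $W_0$; both are handled transparently once one notes $W_0\subseteq V^*$ from~\eqref{eq:3}.
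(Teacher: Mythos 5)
Your proposal is correct and follows essentially the same route as the paper: test against $W_0$ (using $W_0\subseteq V^*$ so $q$ annihilates it) to get $Au=0$ and $u\in W$, then use~\eqref{eq:D} and $q\in\Vsp$ together with $D^*=D$ to place $u$ in $\lprp{D(V^*)}=V$, and finish with the injectivity part of~\eqref{eq:A1-V}. The only cosmetic difference is that you record the intermediate identity $q=Du$ explicitly (and deduce $q=0$ from it), whereas the paper concludes $q=0$ by substituting $u=0$ back into the weak equation; both are equivalent.
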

\begin{proof}
  Suppose
  \begin{equation}
    \label{eq:12-V}
    (u,\At v )_L +   \ip{ q,v }_W = 0 \qquad \forall v \in W.
  \end{equation}
  Since $q \in \Vsp$, we have $ \ip{ q, v }_W = 0 $ for all $v \in
  W_0$ due to~\eqref{eq:3}.  Hence, choosing $v=v_0\in W_0$
  in~\eqref{eq:12-V}, we conclude that $(u,\At v_0)_L=0$.  Hence,
  using~\eqref{eq:11}, we have $\ip{ A u, v_0}_{W_0}=0$ for all $v_0
  \in W_0$, which implies, by density, that $Au=0$ in $L$. In
  particular, this shows that $u$ is in $W$.  We may therefore
  apply~\eqref{eq:D} to~\eqref{eq:12-V} to get $ (A u, v)_L - \ip{ D
    u,v}_W +  \ip{q,v }_W = 0, $ for all $v \in W.$
  Since $Au=0$,
  \[
  \ip{D v,u}_W = \ip{ q,v}_W \quad\forall v \in W.
  \]
  Since $q \in \Vsp$, the right hand side vanishes for all $v \in
  V^*$, so $u \in \lprp{ D(V^*)}$. Hence by assumption~\eqref{eq:V},
  $u \in V$.  By~\eqref{eq:A1-V}, $u=0$. Using this
  in~\eqref{eq:12-V}, it also follows that $q=0$.
\end{proof}

\begin{lemma}[Inf-sup condition]
  \label{lem:infsup-V}
  Suppose~\eqref{eq:V} and~\eqref{eq:A1-V} hold. Then, there is a
  $C>0$ such that for all $v\in W$,
  \[
  C \| v \|_W\le
  \sup_{(u,q)\in L\times \Vsp}
  \frac{ | b( (u,q),v) |}{ \| (u,q) \|_{L\times W'} },
  \qquad \forall v \in W.
  \]
\end{lemma}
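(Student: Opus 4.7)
The plan is to construct, for each fixed $v \in W$, an explicit test pair $(u,q) \in L \times \Vsp$ whose ratio $|b((u,q),v)|/\|(u,q)\|_{L \times W'}$ is bounded below by a constant multiple of $\|v\|_W$. Since Proposition~\ref{prop:collect} gives the norm equivalence between $\|v\|_W$ and $(\|v\|_L^2 + \|\At v\|_L^2)^{1/2}$, it suffices to arrange that $b((u,q),v)$ simultaneously controls both $\|\At v\|_L^2$ and $\|v\|_L^2$, while the test pair has norm of order $(\|\At v\|_L^2 + \|v\|_L^2)^{1/2}$.

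My strategy is to take $u = \At v + \phi$ and $q = D\phi$, where $\phi \in V$ is constructed from $v$ as follows. Hypothesis~\eqref{eq:A1-V} says $A:V\to L$ is a bijection, so by the open mapping theorem there is a unique $\phi \in V$ with $A\phi = v$, satisfying $\|\phi\|_W \le C\|v\|_L$. By~\eqref{eq:10} we have $D(V) = \Vsp$, so $q := D\phi$ is indeed an admissible element of the flux space.

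Next, I would expand
\[
b((u,q),v) = (\At v + \phi,\, \At v)_L + \ip{D\phi,\, v}_W,
\]
and use~\eqref{eq:D} to rewrite the second summand as $(A\phi, v)_L - (\phi, \At v)_L$. The two $(\phi,\At v)_L$ contributions then cancel, and since $A\phi = v$, what remains is
\[
b((u,q),v) = \|\At v\|_L^2 + \|v\|_L^2.
\]
For the denominator, the triangle inequality, the boundedness of $D \in \L(W,W')$, and the bound $\|\phi\|_W \le C\|v\|_L$ combine to yield $\|u\|_L^2 + \|q\|_{W'}^2 \le C'\bigl(\|\At v\|_L^2 + \|v\|_L^2\bigr)$. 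Dividing and applying the equivalence from Proposition~\ref{prop:collect} then gives $|b((u,q),v)|/\|(u,q)\|_{L \times W'} \ge c\,\|v\|_W$, which is the desired inf-sup bound.

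The main subtlety is identifying the ansatz $u = \At v + \phi$, $q = D\phi$ so that the cross $(\phi,\At v)_L$ terms cancel exactly; once this pair is guessed, the remaining steps reduce to routine algebra. A minor technical point is that admissibility of $q$ relies on the identification $\Vsp = D(V)$ from~\eqref{eq:10}, which itself uses the closedness of the subspaces in~\eqref{eq:V}.
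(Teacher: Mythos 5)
Your proposal is correct and follows essentially the same route as the paper: the paper also takes $w\in V$ with $Aw=v$ and $\|w\|_W\le c\|v\|_L$ from~\eqref{eq:A1-V}, tests with $u=w+\At v$ and $q=Dw$, obtains $b((u,q),v)=\|v\|_L^2+\|\At v\|_L^2$ via~\eqref{eq:D}, and bounds the test pair using the boundedness of $D$ and Proposition~\ref{prop:collect}(\ref{item:equiv}). The only cosmetic difference is that you justify $q\in\Vsp$ via~\eqref{eq:10}, while the paper argues directly from $w\in V=\lprp{D(V^*)}$ and $D^*=D$; these are equivalent.
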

\begin{proof}
  By~\eqref{eq:A1-V}, there is a $c>0$ such that given any $v \in W$,
  there is a unique $w\in W$ satisfying
  \begin{subequations}
    \label{eq:4}
    \begin{align}
      \label{eq:4-1}
      A w& =v,
      \\
      \label{eq:4-2}
      w& \in V,
      \\
      \label{eq:4-3}
      \| w \|_W & \le c \| v \|_L.
  \end{align}
\end{subequations}
  Then, since $w \in V = \lprp{D(V^*)}$, we have $\ip{ D v^*,w}_W =0$
  for any $v^* \in V^*$. Therefore, $q = Dw$ is in $\Vsp$.
  Moreover,
  \begin{align*}
    \| v \|_L^2 + \| \At v\|_L^2
    & = ( A w, v)_L + (\At v,\At v)_L &&\text{by~\eqref{eq:4-1}}
    \\
    & = (w+\At v, \At v)_L + \ip{ D w,v}_W && \text{by~\eqref{eq:D}}
  \end{align*}
  Note that the right hand side equals $b( (w+\At v,q),v)$ as
  $q = Dw$. Continuing,
  \begin{align*}
    \| v \|_L^2 + \| \At v\|_L^2
    & = \frac{b( (w+\At v,q),v) }{ \| (w + \At v, q)\|_{L\times W'}  }
    \| (w + \At v, q)\|_{L\times W'}
    \\
    & \le \left(
      \sup_{(z,r)\in L\times \Vsp} \frac{ | b( (z,r),v) | }{ \| (z,r)\|_{L\times W'}  }
    \right)
    \| (w + \At v, q)\|_{L\times W'}.    
  \end{align*}

  By Proposition~\ref{prop:collect}(\ref{item:equiv})
  and~\eqref{eq:4-3}, $\| w + \At v \|_L\le \| w \|_L + \| v \|_{\tilde
    W}\le C \| v\|_W $ for a $C>0$ depending on $c$. Moreover, if $d$
  denotes the norm of $D$, then $\|q \|_{W'} \le d\| w \|_W\le c d
  \|v\|_W$. Using these estimates to bound $ \| (w + \At v,
  q)\|_{L\times W'}$, the lemma is proved.
\end{proof}

\begin{proof}[Proof of Theorem~\ref{thm:wellposedness-V}]
  Lemmas~\ref{lem:uniqueness-V} and~\ref{lem:infsup-V} verify the
  conditions of the \Babuska-Brezzi theory, from which the stated
  unique solvability follows.

  Now suppose $F$ is expressed in terms of $f$ and $g$ as
  in~\eqref{eq:F-V}. Then choosing $v=v_0\in W_0$ within the weak
  formulation, 
  \[
  (u,\At v_0 )_L +   \ip{ q,v_0 }_W = (f,v_0)_L -  \ip{ Dg,v_0}_W
  = (f,v_0)_L
  \]
  we obtain $(u,\At v_0)_L=\ip{ Au, v_0}_W = (f,v_0)_L$. This proves,
  by density, that $Au = f$ in $L$, and consequently $u\in W$. Then,
  returning to~\eqref{eq:uwp-V} and using~\eqref{eq:D} together with
  $Au =f$, we obtain $ \ip{ q,v}_W = \ip{ D (u-g),v}_W $ for all
  $v \in W$, i.e., $q = D(u-g)$.  Finally to show that
  $u-g \in V =\lprp{D(V^*)}$, consider an arbitrary $v^* \in V^*$.
  Then note that $q\in \Vsp$, so
  \[ 0=\ip{ q,v^* }_W=\ip{ D (u-g),v^* }_W =\ip{ D v^*, u-g }_W.
  \]
  This proves~\eqref{eq:18}. 
  The remaining statements are proved similarly using~\eqref{eq:A2-V}
  in place of~\eqref{eq:A1-V}.
\end{proof}

\section{Examples}   \label{sec:examples}

The assumptions on which the previous theory is based can be verified
for several examples. We begin with the simplest example in one space
dimension in \S~\ref{ssec:dt} where all the ideas are transparent. We
then generalize to the example of multidimensional advection in
\S~\ref{ssec:advect} and establish a new trace theorem for the
associated graph space. The final example in \S~\ref{ssec:1dgeneral}
considers a general symmetric hyperbolic system in one space dimension
and leads into the discussion on the wave equation in the subsequent
section.

\subsection{An example with no space derivatives}   \label{ssec:dt}

  We begin with a simple example in one space dimension that
  illustrates the essential points. Let $K$ denote the open triangle
  in space-time $(x,t) \in \RRR\times \RRR$, with vertices at $(x,t) =
  (0,0), (1,0),$ and $(1,1)$. Set
  \begin{equation}
    \label{eq:A=dt}
    L = L^2(K), \quad
    \D = \D(K), \quad
    A u = \frac{\d u }{\d t}
  \end{equation}
  (where $\D(K)$ denotes the set of compactly supported infinitely
  differentiable functions on~$K$).  Obviously, $\At = -\d_t$,
  so~\eqref{eq:T} is satisfied. We split the boundary of $K$ into
  an inflow, outflow, and a characteristic part:
  \begin{gather*}
    \din K   = \{ (x,t) \in \d K: t=0\}, \qquad 
    \dout K   = \{ (x,t) \in \d K: x=t\},\\
    \dc K   = \{ (x,t) \in \d K: x=1\}.
  \end{gather*}
  Because $\dist( \din K, \dout K) =0$, although the operator
  $D$ is defined on all $W$, we must be careful in speaking of traces
  of functions in $W$ on these boundary parts.  Indeed, 
$w (x,t) = x^{-1/2}$ is in $W$, but its restriction to
  $\din K$ is not in $L^2(\din K)$.

  To study this further, define the maps
  \[
  \tin : v(x,t) \mapsto v(x,0)
  \quad\text{and}\quad
  \tout : v(x,t) \mapsto v(x,x),
  \]
  whose application to any function gives its traces on $\din K$ and
  $\dout K$, respectively. These maps are obviously well defined for
  smooth functions. Below we prove that they extend to $W$. Let
  $L_w^2(S)$ denote the set of all measurable functions $s$ on $S$
  with finite $\int_S w s^2$.

  \begin{lemma}
    \label{lem:trace-dt}
    For the $W$ in this example, the following maps are
    continuous:
    \[
    \tin : W \to L_x^2(0,1), \quad
    \tout : W \to L_x^2(0,1),
    \quad\text{and}\quad
    \tin - \tout : W \to L_{1/x}^2(0,1),
    \]
    i.e., there is a constant $C_0>0$ such that
    \begin{equation}
      \label{eq:5}
      \int_0^1 x |\tin w |^2 \; dx
      +
      \int_0^1 x |\tout w |^2 \; dx
      +
      \int_0^1 \frac {|\tin w - \tout w |^2}{x} \; dx
      \le C_0 \| w \|_W^2
    \end{equation}
    for all $w \in W$.
  \end{lemma}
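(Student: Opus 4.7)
The plan is to prove \eqref{eq:5} first for classical smooth functions on $\bar K$ and then extend to all of $W$ by a density argument. Throughout, I exploit the fact that for the operator $A = \partial_t$ at hand, integration along vertical fibers $\{x\} \times (0,x)$ gives direct access to both trace values.

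First, for $w \in C^\infty(\bar K)$ and fixed $x \in (0,1)$, the fundamental theorem of calculus yields
\[
w(x,x) - w(x,0) = \int_0^x \partial_t w(x,t)\,dt.
\]
Squaring and invoking Cauchy--Schwarz gives $|w(x,x) - w(x,0)|^2 \le x \int_0^x |\partial_t w(x,t)|^2\,dt$; dividing by $x$ and integrating over $x \in (0,1)$ produces
\[
\int_0^1 \frac{|\tin w - \tout w|^2}{x}\,dx \;\le\; \|A w\|_L^2,
\]
which handles the third term. For the individual trace terms, I use the FTC in reverse: for $0 \le t \le x$,
\[
w(x,0) = w(x,t) - \int_0^t \partial_s w(x,s)\,ds,
\]
so $|w(x,0)|^2 \le 2|w(x,t)|^2 + 2x \int_0^x |\partial_s w(x,s)|^2\,ds$. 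Averaging in $t$ over $(0,x)$ and then integrating in $x \in (0,1)$, using $x^2 \le 1$, yields
\[
\int_0^1 x |\tin w|^2\,dx \;\le\; 2 \|w\|_L^2 + 2 \|Aw\|_L^2 \;\le\; 2 \|w\|_W^2.
\]
An identical argument starting from $w(x,x) = w(x,t) + \int_t^x \partial_s w(x,s)\,ds$ gives the matching bound for $\int_0^1 x |\tout w|^2\,dx$. Summing the three estimates produces \eqref{eq:5} for smooth $w$.

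The remaining step is to pass from $C^\infty(\bar K)$ to $W$. Because the target spaces $L^2_x(0,1)$ and $L^2_{1/x}(0,1)$ are Banach, once the estimate \eqref{eq:5} is established on a dense subspace of $W$ on which $\tin$ and $\tout$ coincide with the classical restriction, it extends by continuity and simultaneously defines $\tin, \tout$ on all of $W$. Density of $C^\infty(\bar K)$ in $W$ is standard for the graph space of a constant-coefficient first-order operator on a Lipschitz domain: given $w \in W$, one extends $w$ across the three sides of $\d K$ (for instance, by trivial extension along the $t$-direction after a change of variables that straightens each side) and then mollifies, obtaining an approximating sequence in $C^\infty(\bar K)$ that converges to $w$ in the $W$-norm.

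The only conceptual subtlety is ensuring that the weighted trace estimates proved for smooth functions pass to the limit on $\d K$; this is automatic since the right-hand side of \eqref{eq:5} involves only $\|w\|_W$, and the left-hand side is controlled by quantities that are lower semicontinuous with respect to strong convergence in $W$. The main potential obstacle is the density step, but it is routine for this operator; no nontrivial trace machinery is required beyond the fundamental theorem of calculus along vertical slices.
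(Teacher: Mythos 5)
Your proposal is correct and takes essentially the same route as the paper: both prove \eqref{eq:5} for smooth functions by the fundamental theorem of calculus along the vertical fibers $\{x\}\times(0,x)$ together with Cauchy--Schwarz (one estimate for each of the three terms), and then extend $\tin$, $\tout$ to all of $W$ by boundedness on a dense subspace of smooth functions. The only difference is in how the density of smooth functions up to the boundary in the graph space is obtained: the paper simply cites a known result (Jensen's thesis), while you sketch the standard translation-and-mollification argument; note only that the ``trivial extension'' phrasing should be replaced by translation (or dilation) plus mollification, since extension by zero across $\d K$ generally leaves the graph space.
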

  \begin{proof}
    A general density result in~\cite[Theorem~4]{Jense04} implies that
    $C^1(\bar K)$ is dense in $W$,
    so it suffices to prove~\eqref{eq:5} for all $w \in C^1(\bar K)$.
    Beginning with the fundamental theorem of calculus,
    \begin{align*}
      \tin w(x) = w(x,r) - \int_0^r \d_t w(x,s)\; ds,
    \end{align*}
    squaring, integrating over $r$, and overestimating,
    \begin{align*}
      x |\tin w(x)|^2
      & =
      \int_0^x |\tin w(x)|^2 \; dr
      \le 2 \int_0^x | w(x,r)|^2\; dr
       + 2 \int_0^x \! r\! \int_0^r |\d_t w(x,s)|^2 \; ds \, dr.
    \end{align*}
    Now integrating over $x$ and overestimating again,
    \begin{align*}
      \frac  1 2 \int_0^1 x |\tin w(x)|^2 \; dx
      & \le
        \int_0^1 \!\!\! \int_0^x | w(x,r)|^2\; dr\, dx
       +  \int_0^1\!\!\! \int_0^1 \! 1\! \int_0^x |\d_t w(x,s)|^2 \; ds \, dr\, dx
       \\
       & =  \| w \|_W^2.
    \end{align*}
    A similar argument shows that the same inequality holds with
    $\tin$ replaced by $\tout$.

    To complete the proof, we therefore only need to show that
    \begin{equation}
      \label{eq:6}
      \int_0^1 \frac {|\tin w - \tout w |^2}{x} \; dx
      \le \| w \|_W^2.
    \end{equation}
    But this follows from
    \[
    |\tin w(x) - \tout w(x)|^2 =
    \left|\int_0^x \d_t w(x,s) \; ds\right|^2
    \le
    x\int_0^x |\d_t w(x,s)|^2 \; ds,
    \]
    dividing through by $x$ and integrating over $x$.
  \end{proof}

  \begin{lemma}
    \label{lem:V-for-dt}
    Assumption~\eqref{eq:V} holds for this example after
    setting
    \begin{subequations}
      \label{eq:VV1*}
    \begin{align}
      V & = \{ w \in W: \tin w =0 \},
      \\
      V^* & = \{ w \in W: \tout w =0 \}.
    \end{align}
    \end{subequations}
  \end{lemma}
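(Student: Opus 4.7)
The plan is to reduce everything to an integration-by-parts identity for $\ip{Du,v}_W$ on $K$.  Continuity of $\tin$ and $\tout$ from Lemma~\ref{lem:trace-dt} already shows $V=\ker\tin$ and $V^*=\ker\tout$ are closed subspaces of $W$. For $u,v\in C^1(\bar K)$, the product rule together with the fact that $n_t=0$ on $\dc K$ gives, by the divergence theorem,
\begin{equation*}
\ip{Du,v}_W \;=\; \int_K \d_t(uv)\,dx\,dt \;=\; \int_0^1 \tout u\,\tout v\;dx \;-\; \int_0^1 \tin u\,\tin v\;dx,
\end{equation*}
where on $\dout K$ the factor $n_t=1/\sqrt2$ exactly cancels the arc-length factor $\sqrt2$.

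To extend this identity to all $u,v\in W$, I would rewrite the right-hand side as
\begin{equation*}
\tfrac12\!\int_0^1\! (\tout u-\tin u)(\tout v+\tin v)\,dx \;+\; \tfrac12\!\int_0^1\! (\tout u+\tin u)(\tout v-\tin v)\,dx,
\end{equation*}
in which each term pairs an $L_{1/x}^2$ factor against an $L_x^2$ factor, so~\eqref{eq:5} and Cauchy--Schwarz bound the right-hand side by $C\|u\|_W\|v\|_W$.  The density of $C^1(\bar K)$ in $W$ (the same result invoked in the proof of Lemma~\ref{lem:trace-dt}) then extends the identity.  Assumption~(V1) is now immediate: for $w\in V$ one has $\tin w=0$, hence $\tout w = \tout w - \tin w\in L_{1/x}^2(0,1)\subset L^2(0,1)$ and $\ip{Dw,w}_W=\int_0^1|\tout w|^2\,dx\ge 0$, so $V\subset C^+$; the argument for $V^*\subset C^-$ is symmetric.

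For~(V2), the inclusion $V\subseteq\lprp{D(V^*)}$ is immediate from the identity, since $\tin w=0$ and $\tout v^*=0$ make both terms vanish.  For the reverse inclusion, I would construct explicit test functions: given $\phi\in C_c^\infty((0,1))$, set $v_\phi(x,t)=\phi(x)(1-t/x)$, extended by zero where $\phi$ vanishes.  Because $\phi$ is supported away from $x=0$, both $v_\phi$ and $\d_t v_\phi = -\phi(x)/x$ are bounded on $K$, so $v_\phi\in W$; moreover $v_\phi(x,x)=0$, so $v_\phi\in V^*$ with $\tin v_\phi=\phi$.  Any $w\in \lprp{D(V^*)}$ then satisfies $0 = \ip{Dv_\phi,w}_W = -\int_0^1 \phi\,\tin w\,dx$, which forces $\tin w=0$ (since $\tin w\in L_x^2(0,1)$ is locally integrable on $(0,1)$, and $\phi$ ranges over a set dense in $L_{\mathrm{loc}}^2((0,1))$).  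The symmetric argument with $v_\phi(x,t)=\phi(x)(t/x)$, which has $\tout v_\phi=\phi$ and $\tin v_\phi=0$, gives $V^*=\lprp{D(V)}$.

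The main technical obstacle is the extension in the second paragraph: for general $u,v\in W$ the Lebesgue integrals $\int \tout u\,\tout v\,dx$ and $\int \tin u\,\tin v\,dx$ need not individually exist, and it is only their difference, viewed as a single bilinear form controlled by the $L_{1/x}^2$ estimate on $\tout-\tin$, that is well defined on $W\times W$.  Once this identity is in hand, both~(V1) and the bulk of~(V2) follow quickly.
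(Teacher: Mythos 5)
Your proposal is correct and takes essentially the same route as the paper: the same integration-by-parts identity for smooth functions, rewritten so that each boundary term pairs an $L^2_{1/x}$-type trace factor against an $L^2_x$-type one and then extended by the density of $C^1(\bar K)$ (the paper uses the two asymmetric rewritings \eqref{eq:7}--\eqref{eq:8} where you use a single symmetric one), with the reverse inclusions in \eqref{eq:V2} obtained by realizing arbitrary functions of $\D(0,1)$ as inflow (resp.\ outflow) traces of elements of $V^*$ (resp.\ $V$), a step the paper only asserts and you make explicit with $v_\phi(x,t)=\phi(x)(1-t/x)$. One small wording slip: in checking $V\subseteq\lprp{D(V^*)}$ with your symmetric identity, the two terms do not each vanish but rather cancel ($-\tfrac12\int\tin v^*\,\tout w + \tfrac12\int\tin v^*\,\tout w=0$); the conclusion is unchanged.
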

  \begin{proof}
    For $v,w \in C^1(\bar K)$, the definition of $D$ implies that
    \begin{align} \nonumber 
    \ip{ D w,v}_W
    & = \int_0^1\!\!\! \int_0^x (\d_t w) v + w (\d_t v) \; dt \,dx
      \\  \label{eq:9}
      & = \int_0^1 (\tout w)(\tout v)\; dx - \int_0^1 (\tin w) (\tin v) \; dx.
    % \\   \nonumber 
    % & =
    % \int_0^1 (\tout w)
    % (\tout v - \tin v)
    % +
    % (\tout w - \tin w) (\tin v) \; dx.
    \end{align}
    In order to apply the density argument, we rewrite this expression:
    \begin{align}
       \label{eq:7}
       \ip{ D w,v}_W
       & = \int_0^1 (x^{1/2}\tout w)
       \left( \frac{\tout v - \tin v}{x^{1/2}} \right) \; dx
       +
       \int_0^1 \left( \frac{\tout w 
         - \tin w}{x^{1/2}} \right) (x^{1/2}\tin v) \; dx.
    \end{align}
    Now, one can immediately verify using Cauchy-Schwarz inequality and
    Lemma~\ref{lem:trace-dt}, that both the integrals extend
    continuously to $W$. Hence~\eqref{eq:7} holds for all $v$ and $w$
    in $W$.  Similarly, the expression
    \begin{align}
      \label{eq:8}
      \ip{ D w,v}_W
      & = \int_0^1
      \left( \frac{\tout w - \tin w}{x^{1/2}} \right) (x^{1/2}\tout v) \; dx
      +
      \int_0^1 (x^{1/2}\tin w) \left( \frac{\tout v - \tin v}{x^{1/2}} \right) \; dx,
    \end{align}
    also holds for all $v$ and $w$ in $W$.

    Let us verify~\eqref{eq:V1}. For any $v \in V$, since $\tin v =0$,
    we have from~\eqref{eq:8} that
    \[
    \ip{ D v,v}_W = \int_0^1 \left( \frac{\tout v - 0}{x^{1/2}}
    \right) (x^{1/2}\tout v) \; dx \ge 0.
    \]
    Hence $V \subseteq C^+$. Similarly, $V^* \subseteq C^-$.

    To prove~\eqref{eq:V2}, let $v \in V$. Then using~\eqref{eq:7} and
    putting $\tin v=0$, we have,
    \[
    \ip{D v^*,v}_W =
    \int_0^1
    (x^{1/2} \tout v^*)
    \left( \frac{\tout v - 0}{x^{1/2}} \right) \; dx
    \]
    which vanishes for any $v^* \in V^*$. Hence $V \subseteq
    \lprp{D(V^*)}$. For the reverse inclusion, let $v^\perp \in
    \lprp{D(V^*)}$. Then, since $\tout v^* =0$ for all $v^* \in V^*$,
    we have from~\eqref{eq:7} that
    \[
    \ip{D v^*,v^\perp}_W =
    \int_0^1 \left( \frac{\tout v^* - \tin v^*}{x^{1/2}} \right) (x^{1/2}\tin v^\perp)
    \; dx
    =
    -\int_0^1  (\tin v^*) (\tin v^\perp)\; dx.
    \]
    Since all functions in $\D(0,1)$ can be written as $\tin v^*$ for
    some $v^* \in V^*$, this implies that $\tin v^\perp =0$ a.e.\ in
    $(0,1)$, so $v^\perp \in V$. Thus, $ V = \lprp{D(V^*)}$. A similar
    argument shows that $ V^* = \lprp{D(V)}.$
  \end{proof}

  \begin{remark}
    Note that although the two integrals in~\eqref{eq:9} need not
    generally exist for all $w,v \in W$, those in the
    identities~\eqref{eq:7} and \eqref{eq:8} exist for all
    $w,v \in W$.
  \end{remark}

  \begin{remark}
    It is proved in~\cite[Lemma~4.4]{ErnGuermCapla07} that if $V+V^*$
    is closed, then an $M$ that satisfies~\eqref{eq:M} can be
    constructed. They then write, ``it is not yet clear to us whether
    properties~\eqref{eq:V1}--\eqref{eq:V2} actually imply that $V +
    V^*$ is closed in $W$.'' This issue was settled
    in~\cite{AntonBuraz10} where they showed by a counterexample that
    \eqref{eq:V1}--\eqref{eq:V2} does not in general imply $V + V^*$
    is closed.  Our study above provides another simpler counterexample:
    Specifically, for $n \ge 2$, let $\chi_n$ denote the indicator
    function of the interval~$[1/n,1]$.  Then $v_n(x,t) = \chi_n(x)
    t/x$ is in $V$ and $v_n^* = \chi_n(x) (x-t)/x$ is in
    $V^*$. Clearly, as $n\to \infty$, the sequence $v_n + v_n^* =
    \chi_n \in V+V^*$ converges in $W$.  But its limit, the function
    $1$, is not in $V + V^*$. Indeed, if $1$ were to equal $v + v^*$
    for some $v\in V$ and $v^*\in V^*$, then by
    Lemma~\ref{lem:trace-dt},
    \[
    \int_0^1 \frac{1}{x}\; dx =
    \int_0^1 \frac{|\tin v + \tout v^*|^2}{x}\; dx
    \le
    2\int_0^1 \frac{|\tin v|^2}{x} + \frac{|\tout v^*|^2}{x} \; dx
    \le 2C \left( \| v \|_W^2+   \| v^* \|_W^2\right)
    \]
    which is impossible.
  \end{remark}

  \begin{lemma}
    \label{lem:bddbelow}
    The inequalities of~\eqref{eq:AtAbelow} hold for this example.
  \end{lemma}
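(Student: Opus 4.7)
The plan is to establish a simple Poincar\'e-type inequality of the form $\|u\|_L \le \gamma \|Au\|_L$ for $u \in V$ (and similarly for $V^*$), and then derive the lower bound $\|Au\|_L \ge c\|u\|_W$ by combining with the definition $\|u\|_W^2 = \|u\|_L^2 + \|Au\|_L^2$.

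First I would reduce to smooth functions. By the density result of Jensen already invoked in the proof of Lemma~\ref{lem:trace-dt}, $C^1(\bar K)$ is dense in $W$, and because the trace maps $\tin, \tout$ are continuous on $W$ (Lemma~\ref{lem:trace-dt}), the subspaces $V$ and $V^*$ are closed and a standard limit argument reduces the verification of \eqref{eq:AtAbelow} to $u \in V \cap C^1(\bar K)$ and $u \in V^* \cap C^1(\bar K)$ respectively.

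Next, for a smooth $u \in V$, I would use the vanishing of $\tin u$ together with the fundamental theorem of calculus to write
\begin{equation*}
u(x,t) = \int_0^t \d_t u(x,s)\; ds.
\end{equation*}
Cauchy--Schwarz gives $|u(x,t)|^2 \le t \int_0^t |\d_t u(x,s)|^2\, ds$. Integrating over $K = \{(x,t): 0<t<x<1\}$ and using that $t \le x \le 1$ throughout $K$, a routine exchange of order of integration yields $\|u\|_L^2 \le \tfrac{1}{2} \|Au\|_L^2$. Consequently,
\begin{equation*}
  \|u\|_W^2 = \|u\|_L^2 + \|Au\|_L^2 \le \tfrac{3}{2}\|Au\|_L^2,
\end{equation*}
which is~\eqref{eq:Abdbelow} with $c = \sqrt{2/3}$. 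For $u \in V^* \cap C^1(\bar K)$, an entirely analogous argument works: since $\tout u = 0$, one writes $u(x,t) = -\int_t^x \d_t u(x,s)\, ds$, uses $|u(x,t)|^2 \le (x-t)\int_t^x |\d_t u(x,s)|^2\, ds \le x \int_0^x |\d_t u(x,s)|^2\, ds$, and integrates. Since $\At = -\d_t$ has the same $L$-norm as $A$, this yields~\eqref{eq:Atbdbelow}.

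There is no real obstacle here: the geometry of $K$ (namely $0 \le t \le x \le 1$) is exactly what makes the Poincar\'e constants finite even though $\din K$ and $\dout K$ meet at the origin. The only mild care needed is to justify the density reduction, which is straightforward given continuity of the trace maps already established.
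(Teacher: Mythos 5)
Your core estimate is fine and is in the same spirit as the paper's: for a smooth $u$ vanishing on the inflow (resp.\ outflow) boundary, the fundamental theorem of calculus, Cauchy--Schwarz, and the geometry $0<t<x<1$ give $\|u\|_L^2\le\tfrac12\|\d_t u\|_L^2$, hence the lower bounds of~\eqref{eq:AtAbelow}; the paper instead differentiates $u^2$ and obtains $\|u\|_L\le 2\|Au\|_L$, so your version is, if anything, slightly more direct and gives a better constant.

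The gap is in the reduction to smooth functions. You justify it by saying that $C^1(\bar K)$ is dense in $W$, that $V$ and $V^*$ are closed, and that ``a standard limit argument'' then reduces matters to $V\cap C^1(\bar K)$. That inference is not valid: density of $C^1(\bar K)$ in $W$ together with closedness of $V$ does not imply that $V\cap C^1(\bar K)$ is dense in $V$ (a closed subspace can meet a dense subspace in something far from dense in it, even only in $\{0\}$). What is actually needed --- and what the paper spends the first half of its proof establishing --- is an explicit construction of smooth approximants \emph{inside} $V$: take $w_n\in C^1(\bar K)$ with $w_n\to v$ in $W$ and set $v_n(x,t)=w_n(x,t)-(\tin w_n)(x)$; then $v_n\in V\cap C^1(\bar K)$, the correction term is annihilated by $A=\d_t$, and $\|v_n-v\|_W\le\|w_n-v\|_W+\|\tin(w_n-v)\|_{L^2_x(0,1)}\to 0$ by the trace estimate of Lemma~\ref{lem:trace-dt} (similarly with $\tout$ for $V^*$). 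So the trace continuity you invoke is indeed the right tool, but it enters through this subtraction construction, not through an abstract closedness/density argument; as written, your density step is unproven.
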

  \begin{proof}
    Given any $v \in V$, by the density of $C^1(\bar K)$ in $W$, there
    is a sequence $w_n \in C^1(\bar K)$ converging to $v$ in $W$.  Let
    $v_n (x,t) = w_n(x,t) - (\tin w_n)(x)$. Clearly
    $v_n \in V \cap C^1(\bar K).$ Moreover, 
    \begin{align*}
      \| v_n - v\|_W 
      &
     \le 
        \| w_n - v \|_W + \| \tin w_n \|_{L^2(K)}
      \\
      & 
        = \| w_n - v \|_W + 
        \left( \int_0^1 \!\! \int_0^x |w_n(x,0)|^2 \; dt\, dx\right)^{1/2}
      \\
      & 
        = \| w_n - v \|_W +  \| \tin w_n \|_{L^2_x(0,1)}
      \\
     & = \| w_n - v \|_W +  \| \tin (w_n - v) \|_{L^2_x(0,1)}.
    \end{align*}
    This, together with Lemma~\ref{lem:trace-dt}, and the convergence
    of $w_n$ to $v$ in $W$, imply the convergence of $v_n$ to $v$ in
    $W$. Thus $V \cap C^1(\bar K)$ is dense in $V$.  Similarly,
    $V^* \cap C^1(\bar K)$ is dense in $V^*$.  Hence, it suffices to
    prove the inequalities of~\eqref{eq:AtAbelow} for the dense
    subsets.

    For any $C^1(\bar K)$ function $v$ in $V$,  we have
    \begin{align*}
      v(x,t)^2
      & = \int_0^t \frac{ \d }{\d s}\left(v(x,s)^2\right)\; ds
      = \int_0^t 2v(x,s) \frac{ \d }{\d s}v(x,s)\; ds,
    \end{align*}
    which implies
    \begin{align*}
      \int_0^1\!\!\!\int_0^x v(x,t)^2 \; dt\,dx
      &
      \le
      2\int_0^1\!\!\!\int_0^x
      \left( \int_0^t v(x,s)^2\;ds\right)^{1/2}
      \left( \int_0^t \left|\frac{ \d }{\d s}v(x,s)\right|^2\; ds\right)^{1/2}dt\,dx,
      \\
      & \le
      2\int_0^1
      \left( \int_0^x \!\!\!\int_0^t v(x,s)^2\;ds\,dt\right)^{1/2}
      \left( \int_0^x\!\!\!\int_0^t \left|\frac{ \d }{\d s}v(x,s)\right|^2
        \; ds\,dt\right)^{1/2}dx.
    \end{align*}
    Since $x \le 1$, this shows that $\| v \|_L \le 2 \| A v \|_L$ for
    all $v \in V \cap C^1(\bar K)$ and hence for all $v \in V$. The
    proof of~\eqref{eq:Atbdbelow} is similar.
  \end{proof}

  \begin{theorem}
    \label{thm:eg-dt}
    Formulations~\eqref{eq:uwp-V} and~\eqref{eq:uwp-V-adj} are well posed
    for this example.
  \end{theorem}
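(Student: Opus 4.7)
The plan is to simply assemble the ingredients already proved in this subsection and invoke Theorem~\ref{thm:wellposedness-V}. That theorem gives well-posedness of~\eqref{eq:uwp-V} and~\eqref{eq:uwp-V-adj} as soon as hypotheses~\eqref{eq:V} and~\eqref{eq:A-V} are verified for the problem at hand, and all the structural hypothesis~\eqref{eq:T} needed to make sense of $D$, $W$, $W_0$, etc.\ is in force. So the proof proposal is a short bookkeeping argument.

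First I would check \eqref{eq:T}, which was remarked informally at the top of \S\ref{ssec:dt}: with $L = L^2(K)$, $\D = \D(K)$, $A = \d_t$ and $\At = -\d_t$, integration by parts on compactly supported smooth functions gives $(A\phi,\psi)_L = (\phi,\At\psi)_L$, and since $A + \At = 0$ the estimate~\eqref{eq:T2} is trivial (with any $c>0$). Hence the abstract framework of \S\ref{sec:friedrichs} applies, and in particular $W$, $W_0$, and $D$ are well defined in the sense used there.

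Next I would note that Lemma~\ref{lem:V-for-dt} establishes~\eqref{eq:V} with the explicit choices~\eqref{eq:VV1*} of $V$ and $V^*$. It then remains to produce~\eqref{eq:A-V}, i.e., the bijectivity of $A : V \to L$ and $\At : V^* \to L$. This is precisely the content of Theorem~\ref{thm:bddbelow}, whose hypotheses~\eqref{eq:AtAbelow} were just verified by Lemma~\ref{lem:bddbelow}. Applying Theorem~\ref{thm:bddbelow} therefore yields~\eqref{eq:A-V}.

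With both~\eqref{eq:V} and~\eqref{eq:A-V} in hand, Theorem~\ref{thm:wellposedness-V} applies directly and delivers the unique solvability of~\eqref{eq:uwp-V} and~\eqref{eq:uwp-V-adj}. There is no real obstacle beyond bookkeeping, since the substantive work (the trace estimates of Lemma~\ref{lem:trace-dt}, the identification of $V$ and $V^*$ in Lemma~\ref{lem:V-for-dt}, and the boundedness-below estimates of Lemma~\ref{lem:bddbelow}) has already been done. The only small point worth flagging is that the inequality $\|v\|_L \le 2 \|Av\|_L$ from Lemma~\ref{lem:bddbelow} controls only the $L$-part of $\|v\|_W$, but since $\|Av\|_L$ itself bounds the $A$-part, we obtain $\|v\|_W \le c \|Av\|_L$ as required by~\eqref{eq:Abdbelow}; and analogously for $\At$ on $V^*$.
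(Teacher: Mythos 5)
Your proposal is correct and follows exactly the paper's own route: invoke Lemma~\ref{lem:V-for-dt} for~\eqref{eq:V}, Lemma~\ref{lem:bddbelow} for~\eqref{eq:AtAbelow}, then Theorem~\ref{thm:bddbelow} to obtain~\eqref{eq:A-V}, and finally Theorem~\ref{thm:wellposedness-V}. The small bookkeeping point you flag (upgrading $\|v\|_L \le 2\|Av\|_L$ to a bound on $\|v\|_W$) is handled correctly and is implicit in the paper as well.
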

  \begin{proof}
    By Lemmas~\ref{lem:V-for-dt} and~\ref{lem:bddbelow},
    assumptions~\eqref{eq:V} and~\eqref{eq:AtAbelow} hold, so
    Theorem~\ref{thm:bddbelow} implies that~\eqref{eq:A-V} holds.
    Therefore, Theorem~\ref{thm:wellposedness-V} gives the required
    result.
  \end{proof}

  \subsection{Unidirectional advection}   \label{ssec:advect}

  The above calculations have a straightforward generalization to
  multidimensional tent-shaped domains. We say that $K_0$ is a vertex
  patch around a point $p$ if it is an open polyhedron in $\RRR^d$ ($d\ge 1$)
  that can be partitioned into a finite number of $d$-simplices with a
  common vertex~$p\in \RRR^d$.  

  We first consider domains $K$ built on (spatial) vertex patches of
  the form
  \begin{equation}
    \label{eq:K}
    K = \{ (x,t): \; x\in K_0, \; \gi(x) < t < \go(x) \}
  \end{equation}
  (and later, after Definition~\ref{defn:tent} below, specialize to
  tent-shaped domains). Above, $\go(x)$ and $\gi(x)$ are Lipschitz
  functions on $K_0$ such that $K$ is a nonempty open set in
  $\RRR^{d+1}$.  Then the unit outward normal vector $n=(n_x,n_t)$
  exists a.e.~on $\d K$.  Continuing to consider the same operator as
  in~\eqref{eq:A=dt}, namely $A = \d_t$, but on the new domain $K$,
  the following defines inflow, outflow, and characteristic parts of
  the boundary:
  \begin{subequations}
    \label{eq:dtoutinbdry}
    \begin{gather}
    \din K   = \{ (x,t) \in \d K:  n_t<0\},
    \qquad
    \dout K   = \{ (x,t) \in \d K: n_t>0\},
      \\
      \dc K   = \{ (x,t) \in \d K: n_t=0 \}.
    \end{gather}
  \end{subequations}
  We can immediately prove the following by extending the arguments of
  \S\ref{ssec:dt}.

  \begin{theorem}
    \label{thm:multidim_dt}
    Let $K$ be as in~\eqref{eq:K} and let $A = \d_t$. Then the inflow
    and outflow trace maps, $ \tin : v(x,t) \mapsto v(x,\gi(x))$ and
    $\tout : v(x,t) \mapsto v(x,\go(x))$, extend to continuous linear
    operators
    \[
    \tin : W \to  L_{\go-\gi}^2(K_0), \quad
    \tout : W \to L_{\go-\gi}^2(K_0),
    \quad\text{and}\quad
    \tin - \tout : W \to L_{1/(\go-\gi)}^2(K_0).
    \]
    As in~\eqref{eq:VV1*}, set $V = \ker(\tin)$ and
    $V^* = \ker(\tout)$. Then, the assumptions of~\eqref{eq:V} and the
    inequalities of~\eqref{eq:AtAbelow} hold.  Hence the
    formulations~\eqref{eq:uwp-V} and~\eqref{eq:uwp-V-adj} are
    well-posed.
  \end{theorem}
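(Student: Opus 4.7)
The plan is to parallel the argument of Section~\ref{ssec:dt} step by step, with the interval $(0,x)$ replaced by the vertical segment $(\gi(x),\go(x))$, and with $x$, as a weight, replaced by $\go(x)-\gi(x)$. First I would establish density of $C^1(\bar K)$ in $W$ by invoking the same general density result of \cite[Theorem~4]{Jense04} used earlier; this is applicable because $K$ is a Lipschitz domain by the Lipschitz assumption on $\gi$ and $\go$ and the polyhedrality of $K_0$. Then for smooth $w$, I would start from the fundamental theorem of calculus
\[
w(x,\gi(x)) = w(x,r) - \int_{\gi(x)}^r \d_t w(x,s)\; ds,
\]
square, integrate $r$ over $(\gi(x),\go(x))$, overestimate, and integrate over $K_0$ to obtain the bound on $\tin$; the bound on $\tout$ is symmetric. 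For $\tin-\tout$, Cauchy--Schwarz applied to $\tout w(x)-\tin w(x)=\int_{\gi(x)}^{\go(x)}\d_t w(x,s)\,ds$ directly yields $|\tout w-\tin w|^2\le (\go-\gi)\int_{\gi(x)}^{\go(x)}|\d_t w|^2\,ds$; dividing by $\go-\gi$ and integrating over $K_0$ gives the weighted inequality. Density then extends all three bounds to $W$.

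Next, I would verify~\eqref{eq:V} with $V=\ker(\tin)$ and $V^*=\ker(\tout)$. The key identity, valid for $C^1(\bar K)$ functions, is
\[
\ip{D w,v}_W = \int_{K_0}(\tout w)(\tout v)\,dx - \int_{K_0}(\tin w)(\tin v)\,dx,
\]
obtained by integration by parts in $t$ on each simplex of the patch and noting that the characteristic part $\dc K$ (where $n_t=0$) contributes nothing. To extend this to all of $W$, I would rewrite the right-hand side in the weighted-trace form used in~\eqref{eq:7} and~\eqref{eq:8}, namely as sums of products of factors of the form $(\go-\gi)^{1/2}\tin w$ or $(\tout w-\tin w)/(\go-\gi)^{1/2}$, all of which belong to unweighted $L^2(K_0)$ by the trace bounds just proved. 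Then $V\subseteq C^+$ and $V^*\subseteq C^-$ follow by choosing the rewritten form whose surviving term is manifestly nonnegative (resp. nonpositive) when $\tin v=0$ (resp. $\tout v=0$).

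For~\eqref{eq:V2}, the inclusion $V\subseteq\lprp{D(V^*)}$ is immediate from the identity. For the reverse inclusion, given $v^\perp\in\lprp{D(V^*)}$, I would substitute $v=v^\perp$ and $w=v^*\in V^*$ into the appropriate reformulation: since $\tout v^*=0$, this reduces to
\[
0=\ip{D v^*,v^\perp}_W = -\int_{K_0}(\tin v^*)(\tin v^\perp)\,dx.
\]
The main technical point here is that the set $\{\tin v^*:\,v^*\in V^*\}$ is dense in $L^2(K_0)$: given any $\varphi\in\D(K_0)$, the function $v^*(x,t)=\varphi(x)(\go(x)-t)/(\go(x)-\gi(x))$ lies in $W$ (it is Lipschitz and compactly supported in the $x$-direction within $K_0$), vanishes on $\dout K$, and satisfies $\tin v^*=\varphi$. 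This forces $\tin v^\perp=0$, giving $v^\perp\in V$. The dual identity $V^*=\lprp{D(V)}$ is symmetric.

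Finally, for~\eqref{eq:AtAbelow}, I would first show $V\cap C^1(\bar K)$ is dense in $V$ by the same cut-off trick used in Lemma~\ref{lem:bddbelow}: given $w_n\in C^1(\bar K)$ converging to $v\in V$ in $W$, let $v_n(x,t)=w_n(x,t)-(\tin w_n)(x)\chi(x,t)$ for a suitable cut-off, or equivalently subtract a Lipschitz extension of $\tin w_n$; the difference is bounded in $W$ by $\|\tin(w_n-v)\|_{L^2_{\go-\gi}(K_0)}$, which tends to zero by the trace estimate. Then for smooth $v\in V$, writing $v(x,t)^2=\int_{\gi(x)}^t 2 v\,\d_s v\,ds$, applying Cauchy--Schwarz, and using that $\go-\gi$ is bounded above on $K_0$ gives $\|v\|_L\le c\|Av\|_L$ with $c$ depending only on the height of $K$. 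The proof of~\eqref{eq:Atbdbelow} is symmetric. With~\eqref{eq:V} and~\eqref{eq:AtAbelow} in hand, Theorem~\ref{thm:bddbelow} yields~\eqref{eq:A-V}, and Theorem~\ref{thm:wellposedness-V} concludes well-posedness. The main obstacle I anticipate is the density statement for $\{\tin v^*:v^*\in V^*\}$, since in the multidimensional setting one must exhibit an explicit extension operator from $K_0$ into $W$ that annihilates $\tout$; the formula $v^*(x,t)=\varphi(x)(\go(x)-t)/(\go(x)-\gi(x))$ above uses only the Lipschitz nature of $\gi,\go$ and the nonnegativity of $\go-\gi$, and should suffice.
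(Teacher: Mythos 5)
Your proposal is correct and follows essentially the same route as the paper, which proves this theorem precisely by the generalizations you describe: weighted trace estimates via the fundamental theorem of calculus along vertical fibers (identities analogous to~\eqref{eq:7} and~\eqref{eq:8}), verification of~\eqref{eq:V} and~\eqref{eq:AtAbelow} by adapting Lemmas~\ref{lem:V-for-dt} and~\ref{lem:bddbelow}, and then invoking Theorems~\ref{thm:bddbelow} and~\ref{thm:wellposedness-V}. Your explicit lifting $v^*(x,t)=\varphi(x)(\go(x)-t)/(\go(x)-\gi(x))$ for the surjectivity of $\tin$ on $V^*$ simply makes precise a step the paper leaves implicit, and it works since $\go-\gi$ is bounded below on $\mathrm{supp}\,\varphi\subset K_0$.
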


  Identities similar to~\eqref{eq:7} and~\eqref{eq:8} prove the
  continuity properties of the trace maps stated above. To prove the
  stated wellposedness, we need to verify the assumptions
  in~\eqref{eq:V} and~\eqref{eq:AtAbelow}, which can be done by simple
  generalizations of the arguments in the proofs of
  Lemmas~\ref{lem:V-for-dt} and~\ref{lem:bddbelow}.  Next, we proceed
  to consider a convection operator on tent-shaped domains.

  \begin{definition} \label{defn:tent} Suppose $K$ and $K_0$ are as
    in~\eqref{eq:K}.  If, in addition, $K$ can be divided into
    finitely many $(d+1)$-simplices with a common edge
    $\{ (p, t): \gi(p) < t < \go(p)\}$, then we call $K$ a space-time
    {\em tent}. We refer to the common edge as its {\em tent pole}.
    Clearly, in this case, $\go$ and $\gi$ are linear on each simplex
    of $K_0$.  We split the tent's boundary into the these parts:
\begin{subequations}
  \label{eq:macbdrysplit}
\begin{gather}
  \label{eq:macbdrysplitio}
  \din K  = \{ (x, \gi(x)) : x \in K_0 \}, \qquad
  \dout K  = \{ (x, \go(x)) : x \in K_0 \},
  \\
  \db K  = \d K \setminus ( \din K \cup \dout K).
\end{gather}
\end{subequations}
We refer to the two parts in~\eqref{eq:macbdrysplitio} as the tent's
{\em inflow} and {\em outflow} boundaries, respectively. (Using such
terms without regard to an underlying flow operator is an abuse of
terminology that we overlook for expediency.)
  \end{definition}

  The equation modeling advection along a fixed direction
  $\alpha \equiv (\alpha_i) \in \RRR^d$ is of the form $A u = f$ with
  \begin{equation}
    \label{eq:17}
    A u = \frac{\d u }{\d t} + \sum_{i=1}^d \alpha_i  \frac {\d u }{\d x_i}.
  \end{equation}
  Setting $L = L^2(K)$, $\D = \D(K)$, and noting that $\At = -A$, we
  can put this into the Friedrichs framework since the
  prerequisite~\eqref{eq:T} holds.

Let $n \in \RRR^{d+1}$ denote the outward unit normal on $\d K$. We
often write it separating its space and time components as $n = ( n_x
, n_t)$ with $n_x \in \RRR^d$. We now assume that the tent
boundaries are such that
\begin{subequations}
  \label{asm:inoutassume}
\begin{align}
  \din K & \subseteq \{ (x,t) \in \d K: n \text{ at } (x,t) 
           \text{ satisfies } n_t + \alpha \cdot n_x <0 \} \\
  \dout K & \subseteq \{ (x,t) \in \d K: n \text{ at } (x,t) \text{ satisfies } n_t + \alpha \cdot n_x > 0 \}.
\end{align}
\end{subequations}
The vertical part of the boundary, namely $\db K$, is further split
into three parts $\db^+ K, \db^- K,$ and $\db^0 K$ where
$n_t + \alpha \cdot n_x = \alpha \cdot n_x$ is positive, negative, and
zero, respectively (see Figure~\ref{fig:proof}).  Let $\Gin$ and
$\Gout$ denote the closures of $\din K \cup \db^- K$ and
$\dout K \cup \db^+ K$, respectively, and let
\[
\Gio = \Gin \cap \Gout.
\]
Define $\delta(z) = \dist(z, \Gio)$. We will use the restriction of this
function to $\Gin$ and $\Gout$ as weight functions while describing
the norm continuity of traces below.  For smooth functions $w$ on $K$,
let
\[
\tin w = w|_{\Gin},
\qquad
\tout w = w|_{\Gout}.
\]

\begin{theorem}
  \label{thm:advect}
  Let $K$ be a tent and $A$ be given
  by~\eqref{eq:17}. Suppose~\eqref{asm:inoutassume} holds. Then the
  above-defined maps $\tin$ and $\tout$ extend to continuous linear
  operators
  \[
  \tin : W \to L_{\delta}^2(\Gin) \quad\text{ and }\quad
  \tout : W \to L_{\delta}^2(\Gout).
  \]
  Hence $V=\ker(\tin)$ and $V^*=\ker(\tout)$ are closed subspaces of
  $W$.  When restricted to these subspaces, the traces have an
  additional continuity property, namely
  \begin{equation}
    \label{eq:29}
  \tin : V^* \to L^2_{1/\delta}(\Gin) \quad\text{ and }\quad
  \tout : V \to L^2_{1/\delta} (\Gout)    
  \end{equation}
  are continuous.  Finally, with this $V$ and $V^*$, the weak
  formulations~\eqref{eq:uwp-V} and~\eqref{eq:uwp-V-adj} are
  well-posed.
\end{theorem}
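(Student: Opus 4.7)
The plan is to mirror the blueprint of Section~\ref{ssec:dt}, replacing the vertical characteristic direction $(0,1)$ with $(\alpha,1)$. After reducing every claim to the case $w \in C^1(\bar K)$ via the density theorem of~\cite{Jense04}, I parametrize, for each $z \in \Gin$, the characteristic $\gamma_z(s) = z + s(\alpha,1)$ and denote by $\htout(z)$ its first exit time, so that $\gamma_z(\htout(z)) \in \Gout$. The hypothesis~\eqref{asm:inoutassume} together with the polyhedral tent geometry yields the two-sided comparison $\htout(z) \asymp \delta(z)$; symmetrically, one defines an entry time $\htin(z)$ on $\Gout$ with $\htin(z) \asymp \delta(z)$. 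The first trace bounds then follow from the fundamental theorem of calculus along $\gamma_z$: writing $w(z) = w(\gamma_z(\htout(z))) - \int_0^{\htout(z)} (Aw)(\gamma_z(s))\,ds$, squaring, multiplying by $\delta(z)$, integrating over $\Gin$, and applying Cauchy--Schwarz together with a change of variables onto $K$, I bound the result by $C(\|w\|_L^2 + \|Aw\|_L^2) = C\|w\|_W^2$. The stronger estimate~\eqref{eq:29} follows because when $w \in V^* = \ker \tout$, the outflow boundary term in this identity vanishes, leaving $|w(z)|^2 \le \htout(z)\,\|Aw\circ\gamma_z\|_{L^2(0,\htout(z))}^2$; dividing by $\delta(z)$ and integrating then gives the $L^2_{1/\delta}(\Gin)$ bound by $\|Aw\|_L^2$. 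The analogous statements for $\tout$ follow by reversing the characteristic direction.

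For wellposedness, I verify~\eqref{eq:V} and~\eqref{eq:AtAbelow}, then invoke Theorems~\ref{thm:bddbelow} and~\ref{thm:wellposedness-V}. An integration by parts gives, for smooth $w, v$,
$$ \ip{Dw,v}_W = \int_{\d K} (n_t + \alpha\cdot n_x)\, wv\, d\sigma, $$
with integrand nonnegative on $\Gout$ and nonpositive on $\Gin$ by~\eqref{asm:inoutassume}. Following the pattern of~\eqref{eq:7}--\eqref{eq:8}, I split each occurrence of $w$ and $v$ into a $\delta^{1/2}$-weighted trace and a $\delta^{-1/2}$-weighted difference of the form $\tin - \tout$; both factors are controlled by the trace estimates above, so the resulting identity extends to all $w,v \in W$ by density. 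The inclusions $V \subseteq C^+$ and $V^* \subseteq C^-$ and the annihilator equalities in~\eqref{eq:V2} then drop out exactly as in Lemma~\ref{lem:V-for-dt}. For the bound-below inequalities~\eqref{eq:AtAbelow}, I first establish density of $V \cap C^1(\bar K)$ in $V$ by the trace-subtraction trick of Lemma~\ref{lem:bddbelow}, and then, for $v \in V \cap C^1(\bar K)$ with $\tin v = 0$, integrate $v^2$ along $\gamma_z$ starting from the zero inflow trace; Cauchy--Schwarz together with uniform boundedness of the characteristic length across the tent delivers $\|v\|_L \le C\|Av\|_L$. The bound on $\At : V^* \to L$ is symmetric.

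The main obstacle is the geometric bookkeeping that secures the uniform comparability $\htout(z) \asymp \delta(z)$ on $\Gin$. Away from $\Gio$ this is straightforward, but near $\db^+ K \cup \db^- K$ one must use the tent's polyhedral structure together with the strict transversality encoded in~\eqref{asm:inoutassume} to rule out characteristics that graze the boundary and yield degenerate rates. Once this equivalence is secured, the Jacobian of the characteristic change of variables parametrizing $K$ by $(z,s) \in \Gin \times [0,\htout(z)]$ is bounded above and below, and every remaining step proceeds in parallel with its one-dimensional counterpart in Section~\ref{ssec:dt}.
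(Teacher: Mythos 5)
Your argument is, in substance, the paper's proof unfolded: the paper straightens the characteristics by the affine change of variables $\hx = x-\alpha t$, $\htt = t$, which turns $A$ into $\d_{\htt}$ on $\hK = H^{-1}K$, and then simply invokes Theorem~\ref{thm:multidim_dt} (trace continuity, the spaces $\hat V,\hat V^*$, and wellposedness for $\d_{\htt}$ on graph-type domains). The only genuinely new work in the paper is (i) identifying $\din \hK = H^{-1}(\din K\cup \db^- K)$ etc.\ using~\eqref{asm:inoutassume}, (ii) the comparability $\hgo-\hgi \asymp \hat\delta$ of~\eqref{eq:22}, proved by the elementary triangle identity~\eqref{eq:23}, and (iii) transporting the weak formulation back to $K$, so wellposedness is inherited rather than re-proved. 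You instead re-verify~\eqref{eq:V} and~\eqref{eq:AtAbelow} directly on $K$ and invoke Theorem~\ref{thm:bddbelow}; that route is legitimate (it is exactly how the paper obtains Theorem~\ref{thm:multidim_dt} itself), but it obliges you to redo the analogues of~\eqref{eq:7}--\eqref{eq:8} and of Lemma~\ref{lem:bddbelow} with surface-measure/Jacobian bookkeeping along characteristics, all of which the mapping argument gets for free.

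Two points need repair. First, the identity you use for the basic trace bound, $w(z)=w(\gamma_z(\htout(z)))-\int_0^{\htout(z)}(Aw)(\gamma_z(s))\,ds$, is the \emph{difference} identity (the analogue of~\eqref{eq:6}): after weighting by $\delta(z)$ it bounds the $\delta$-weighted inflow trace by the $\delta$-weighted \emph{outflow} trace plus $\|Aw\|_L^2$, which is circular, since the outflow bound is the symmetric claim still to be proved; no change of variables onto $K$ controls that endpoint term by $\|w\|_L$. The correct step, as in Lemma~\ref{lem:trace-dt}, is $\tin w(z)=w(\gamma_z(s))-\int_0^s (Aw)(\gamma_z(\sigma))\,d\sigma$ for \emph{every} $s\in(0,\htout(z))$, followed by integration in $s$: that is what produces the weight $\htout(z)\asymp\delta(z)$ on the left and turns the first right-hand term into an interior integral over $K$. (Relatedly, your derivation of~\eqref{eq:29} is done for smooth $w\in V^*$, but the density of $V^*\cap C^1(\bar K)$ in $V^*$ is only established later; proving the $1/\delta$-weighted continuity of $\tin-\tout$ on all of $W$ first, as in the model case, removes this ordering problem.) Second, the comparability $\htout(z)\asymp\delta(z)$ is the actual geometric crux of the theorem, and you only assert it. The paper's proof of~\eqref{eq:22} is short but concrete: near $\Gio$ one forms the triangle $P$, $N$, $O$ and uses $\|P-N\|_2=(\cos\thetao/\sin\theta)\,\|O-N\|_2$, where $\theta>0$ and $0\le\thetao<\pi/2$ are uniformly controlled because the tent boundary is piecewise linear and~\eqref{asm:inoutassume} is a strict transversality condition; away from $\Gio$ both quantities are bounded above and below. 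Some such uniform-angle argument must be supplied, since without it your estimates carry no quantitative information precisely near the set where inflow and outflow boundaries meet, which is the point of the result.
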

\begin{proof}
  The idea is to use a change of variable that brings the operator to
  the previously analyzed operator $\d_t$. The new variables are
  $\hx = x - \alpha t$ and $ \htt = t$, i.e.,
  \[
  \begin{bmatrix}
    x \\ t
  \end{bmatrix}
  = H
  \begin{bmatrix}
    \hx  \\ \htt
  \end{bmatrix}
  \quad \text{ where } \quad
  H =
  \begin{bmatrix}
    I &  \alpha \\
    0 & 1
  \end{bmatrix}.
  \]
  Let $\hK = H^{-1} K$. (Note that $\hat K$ is not a tent, in
  general.) Pulling back functions $w$ on $K$ to functions
  $\hw = w \circ H$ on $\hK$, the chain rule gives
  \begin{equation}
    \label{eq:24}
  \hat A \hat w = (A w) \circ H, 
  \quad \text{ where } \quad 
  \hat A = \frac{ \d  }{\d \htt}.
  \end{equation}
  Thus $w\in W$ if and only if $\hw \in \hat W = \{ \hz \in L^2(\hK): 
  \hat A
  \hz \in L^2(\hK)\}$.

  Next, let $\hn = (\hn_{\hat x}, \hn_{\htt}) $ denote the unit
  outward normal on $\d\hat K$. Then
  $\hn = (\hn_\hx,\hn_\htt) = H^t n / \| H^t n\|_2$.  Defining
  $\din \hK, \dout \hK$, and $\dc \hK$ as in~\eqref{eq:dtoutinbdry},
  we claim that
  \begin{subequations}
    \label{eq:19}
  \begin{align}
    \din \hK  
    & \equiv \{ (\hat x,\hat t) \in \d\hat K : 
        \hat n_{\hat t} < 0\} =  H^{-1}( \din K \cup \db^- K) 
    \\
    \dout \hK  
    &\equiv  \{ (\hat x,\hat t) \in \d\hat K : 
      \hat n_{\hat t} > 0\} 
      = H^{-1}( \dout K \cup \db^+ K),
    \\
    \dc \hK
    & \equiv  \{ (\hat x,\hat t) \in \d\hat K : 
      \hat n_{\hat t} = 0\}  = H^{-1}( \db^0 K).
  \end{align}
  \end{subequations}
  For example, to sketch a proof 
  of the first identity, note that $n$ at $(x,\gi(x))$ is in the
  direction of $(\nabla_x \gi, -1)$ where $\nabla_x$ denotes the
  gradient with respect to $x$.  Hence, because
  of~\eqref{asm:inoutassume}, we have
  $\alpha \cdot \nabla_x \gi -1 <0$ on $\din \hat K$.  Since the
  mapped normal $\hn$ is in the direction of
  \[
  H^t n =
  \begin{bmatrix}
    I  & 0 \\
    \alpha^t  & 1
  \end{bmatrix}
  \begin{bmatrix}
    \nabla_x \gi \\ -1
  \end{bmatrix}
  =
  \begin{bmatrix}
    \nabla_x \gi \\
    \alpha \cdot \nabla_x \gi -1
  \end{bmatrix}
  \]
  we conclude that $\hn_{\htt} <0$. Applying similar arguments on the
  remaining parts of the boundary, the claim~\eqref{eq:19} is proved.

Let $\hat K_0$ be the projection of $\hat K$ on the $\hat t=0$ plane.
There are (continuous piecewise linear) functions $\hgo$ and $\hgi$
such that $\dout \hat K$ and $\din \hat K$ are graphs of $\hgo$ and
$\hgi$, respectively, over $\hat K_0$.  On $\hat K$, since
$\hat A = \d_{\hat t}$, we apply Theorem~\ref{thm:multidim_dt} to
conclude that $\htin \hat w = \hat w|_{\din \hat K}$ and
$\htout \hat w = \hat w|_{\dout \hat K}$ extend to continous linear
operators $ \hat \tin : \hat W \to L^2_{\hgo - \hgi}(\hat K_0)$ and
$\htout : \hat W \to L^2_{\hgo - \hgi}(\hat K_0).$ Hence
$\hat V = \ker (\htin)$ and $\hat V^* = \ker (\htout)$ are closed
subspace of $\hat W$.  By the additional continuity of
$\htin - \htout : \hat W \to L^2_{1/(\hgo - \hgi)}(\hat K_0)$ (also
given by Theorem~\ref{thm:multidim_dt}), we conclude that 
\[
\htin : \hat V^* \to L^2_{1/(\hgo - \hgi) }(\hat K_0),
\qquad
\htout : \hat V \to L^2_{1/(\hgo - \hgi) }(\hat K_0),
\]
are also continuous.

\begin{figure}
  \centering
  \begin{tikzpicture}
    \coordinate (top) at  (0,4);
    \coordinate (bot) at  (0,0);
    \coordinate (rgt) at  (2.2,1.8);
    \draw [blue,  thick] (top) -- (bot) node [midway,left] {$\db^- K$};
    \draw [olive, thick] (bot) -- (rgt) node [midway,right] {$\din K$};
    \draw [red,   thick] (rgt) -- (top) node [midway,right] {$\dout K$};
    \draw[->,dotted] (bot) -- ($(bot)+(2,0)$) node[right] {$x$};
    \draw[->,dotted] (top) -- ($(top)+(0,1)$) node[above] {$t$};

    \foreach \a in {0.5}    % put any alpha value in \a
    { 
      \path (top); \pgfgetlastxy{\XCoord}{\TCoord};    
      \coordinate (topmap) at ($(\XCoord-\a*\TCoord,\TCoord) + (8,0)$);
      \path (rgt); \pgfgetlastxy{\XCoord}{\TCoord};    
      \coordinate (rgtmap) at ($(\XCoord-\a*\TCoord,\TCoord) + (8,0)$);
      \coordinate (botmap) at ($(bot)                        + (8,0)$);
    }
    \draw [blue,  thick] (topmap) -- (botmap) 
                         node[near end,left] {$\din \hat K$}; 
    \draw [olive, thick] (botmap) -- (rgtmap) 
                         node[midway,right]{$\din \hat K$};
    \draw [red, thick]   (rgtmap) -- (topmap) 
                         node(outflow)[near start,above]{\quad$\dout \hat K$};
    \draw[->,dotted] (botmap) -- ($(botmap)+(2,0)$) node[right] {$\hat x$};
    \draw[->,dotted] (botmap) -- ($(botmap)+(0,5)$) node [above] {$\hat t$};
    
    \coordinate (K) at ($(top)+(1,1.6)$);
    \coordinate (Khat) at ($(K)+(5,0)$);
    \node[below] at (K) {$K$};
    \node[below] at (Khat) {\quad$\hat K$};
    \draw[->]  (K) arc (120:60:5) node[midway,below] {$H^{-1}$} ;
    % \node at ($(Khat)-(-0.6,0.9)$) {$\hat A = \d_{\hat t}$};
    % \node at ($(K)-(-1,0.9)$) {$A = \d_t + \alpha \d_x$};
    
    \fill (top) circle (2pt) node[above,left] {$\Gio$};
    \fill (rgt) circle (2pt) node[above,right] {$\Gio$};

    \fill (topmap) circle (2pt);
    \fill (rgtmap) circle (2pt);

    \node[left] at (topmap) {$P$};
    \coordinate (N) at ($(topmap)!0.5!(botmap)$);
    \path (N); \pgfgetlastxy{\XCoord}{\TCoord};    
    \coordinate (zii) at ($(N)+(0,2)$);
    \coordinate (O) at (intersection of N--zii and topmap--rgtmap);
    \draw[dashed,-] (N) node[right]  {$N$} -- (O) node[right] {\;$O$}; 
    \draw (N) circle (1pt);
    \draw (O) circle (1pt);

    \coordinate (arc0) at ($(topmap)!0.3!(N)$);
    \node (cir) [circle through=(arc0)] at (topmap) {};
    \coordinate (arc1) at (intersection of cir and topmap--rgtmap);
    \coordinate (topright) at ($(topmap)+(3,0)$);
    \coordinate (arc2) at (intersection of cir and topmap--topright);
    
    \tkzDrawArc(topmap,arc0)(arc1);
    \tkzDrawArc(topmap,arc1)(arc2);
    \node at ($(arc0)!0.5!(arc1)+0.15*(0.8,-1)$) {$\theta$};
    \draw[dashed] (topmap)-- ++(1,0);
    \node at ($(arc1)!0.5!(arc2)+(0.23,-0.1)$) {$\thetao$};

  \end{tikzpicture}
  \caption{On the left is a tent $K$ with $A = \d_t + 0.5 \d_x$ that
    satisfies~\eqref{asm:inoutassume}. On the right is $\hat K$
    obtained after applying the map in the proof of
    Theorem~\ref{thm:advect} with mapped over operator
    $\hat A = \d_{\hat t}$.}
  \label{fig:proof}
\end{figure}
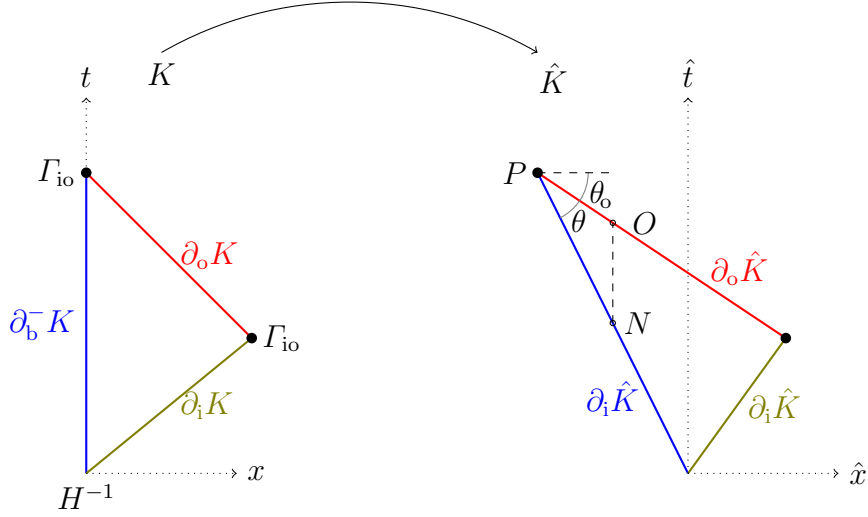

These continuity results are more conveniently mapped to $K$ by using
$\hat \delta(z) = \dist( z, \hGio)$. Note that $\hgo - \hgi$ vanishes
at $\hGio = H^{-1} \Gio$. To restate the continuity properties of
$\tin$ in terms of $\hat \delta$, we prove that there are $c_1, c_2>0$
such that
  \begin{equation}
    \label{eq:22}
    c_1\, \hat \delta(\hat x, \hgi( \hat x) ) \le \hgo(\hat x) - \hgi(\hat x)
    \le c_2 \,\hat \delta(\hat x, \hgi( \hat  x) ),
    \qquad\forall \hat x \in \hat K_0,
  \end{equation}
  (and similarly for $\tout$). When a point
  $N =(\hat x, \hgi(\hat x) )$ on $\din \hat K$ is sufficiently near
  to $\hGio$, the point $P$ nearest to it on $\hGio$, together with
  $O = (\hat x, \hgo(\hat x))$ form a triangle (as shown in
  Figure~\ref{fig:proof}).  Now we may restrict ourselves to the
  two-dimensional plane containing this triangle.

  Consider the case when the segment $PO$ lies on or below the plane
  of constant $\hat t$ passing through $P$, so that $PO$ makes an
  angle $\thetao\ge 0$ with that plane. Let $\theta$ be the angle
  made by $PN$ and $PO$ at $P$. Then, by elementary geometry,
  \begin{equation}
    \label{eq:23}
    \|P-N\|_2 = \frac{\cos \thetao }{\sin \theta} \| O-N \|_2.
  \end{equation}
  Note that $\theta>0$ and $0 \le \thetao<\pi/2$.  Therefore,
  observing that $\| P - N\|_2 = \hat \delta(N)$ and
  $\| O - N \|_2 = \hgo - \hgi$, \eqref{eq:23}
  proves~\eqref{eq:22}. For the remaining geometrical configurations,
  identities similar to~\eqref{eq:23} can be derived to
  prove~\eqref{eq:22}. Having established~\eqref{eq:22}, we find that after
  mapping back to $K$, the stated continuity properties of
  $\tin w = (\htin \hw) \circ H^{-1}$ and
  $\tout w = (\htout \hw)\circ H^{-1}$ are proved.

  It now only remains to prove the stated wellposedness of the weak
  formulations. By Theorem~\ref{thm:multidim_dt}, for any
  $\hat F \in \hat W'$, there is a unique $\hat u \in L^2(\hat K)$ and
  $\hat q = \hat D \hat z \in (\hat V^*)^\perp$ satisfying
  \begin{equation}
    \label{eq:25}
  -(\hat u, \hat A \hat v)_{L^2(\hat K) }
    + \ip{ \hat D \hat z, \hat v }_{\hat W} = \hat F(\hat v),\qquad
    \forall\; \hat v \in\hat W,
  \end{equation}
  where $\hat D \in \L(\hat W, \hat W')$ is defined as before by
  $\ip{ \hat D \hat v, \hat w}_{\hat W} = (\hat A \hat v,\hat
  w)_{L^2(\hat K)} + (\hat v, \hat A \hat v)_{L^2(\hat K)}.$
  Here we have used~\eqref{eq:10} to find a $\hat z \in \hat V$ such
  that $\hat q = \hat D \hat z$. (While $\hat q$ is unique, $\hat z$
  need not be unique.) It now follows from the properties of the
  mapping
that $\hat u$ and
  $\hat q =\hat D \hat z$ satisfies~\eqref{eq:25} if and only if
  $u = \hat u \circ H^{-1}$ and $z =\hat z \circ H^{-1}$ satisfies
  \begin{equation}
    \label{eq:26}
  -(u,  A  v)_{L^2( K) }
    + \ip{  D z,  v }_{ W} = \hat F(v\circ H),\qquad
    \forall\;  v \in W.
  \end{equation}
  Here we have used the fact that~\eqref{eq:24} implies
  $ (\hat u, \hat A \hat v)_{L^2(\hat K) } = ( u, A v)_{L^2( K)}$ and
  consequently,
  $\ip{ \hat D \hat z, \hat v}_{\hat W} = \ip{ D z, v}_{W}$. This
  shows that the weak formulations on $\hat K$ and $K$ are equivalent,
  so the wellposedness of the latter, namely~\eqref{eq:uwp-V}, follows
  from the former.  The wellposedness of~\eqref{eq:uwp-V-adj} is
  proved similarly.
\end{proof}

\begin{remark}
  Under additional assumptions, including $\dist( \din K, \dout K)>0$,
  a stronger trace result is proved
  in~\cite[Lemma~5.1]{ErnGuermCapla07}. However, on tents,
  $\dist( \din K, \dout K)$ is always zero, so we are unable to use
  their result.
\end{remark}

\subsection{A linear symmetric hyperbolic system in one space dimension}  \label{ssec:1dgeneral}

Let $C \in \RRRmm$ be an $m \times m$ real symmetric matrix and let
$K$ be a tent as in~\S\ref{ssec:advect}. Set $L = L^2(K)^m, \D
= \D(K)^m$ and
\begin{equation}
  \label{eq:AC}
  Au = \frac{ \d u }{\d t } + C \frac{ \d u}{\d x}
\end{equation}
where $\d_t u$ and $\d_x u$ are vectors in $\RRR^m$ with their $\ell$th
component equal to $\d_t u_\ell$ and $\d_x u_\ell$, respectively.  Since $C$
is symmetric, $\At = -A$, so assumption~\eqref{eq:T} is obviously
satisfied.

Let $Q$ be an orthogonal matrix and $\vL = \diag(\lambda_\ell)$ be a
diagonal matrix such that $C = Q \vL Q^t$.
Let $\din K, \dout K$ and $\db K$ be as defined
in~\eqref{eq:macbdrysplit}.
In this subsection, we assume -- instead
of~\eqref{asm:inoutassume} -- that
\begin{subequations}
  \label{asm:inoutassumesystem}
\begin{align}
  \din K & \subseteq \{ x \in \d K:  n_t I   + n_x C \text{ is negative definite}\},
  \\
  \dout K & \subseteq \{ x \in \d K: n_t I  + n_x C \text{ is positive definite}\}.
\end{align}
\end{subequations}
For each $\ell =1,\ldots, m$, we decompose $\db K$ into
$\db^{+,\ell} K, \db^{-,\ell} K,$ and $\db^{0,\ell} K$ where
$\lambda_\ell n_x$ is positive, negative, and zero, respectively.  
Then we have the following theorem, which is proved using the
diagonalization of $C$ to separate each component and then appealing
to the analysis in \S~\ref{ssec:advect}. We now opt for a brief
statement of the theorem, leaving the tacitly used properties of the
traces to the proof.

\begin{theorem} \label{thm:hyperbol}
  Suppose~\eqref{asm:inoutassumesystem} holds for the tent $K$ and the
  operator $A$ in~\eqref{eq:AC}.  Then, the
  formulations~\eqref{eq:uwp-V} and~\eqref{eq:uwp-V-adj} with
  \begin{subequations}
    \label{eq:VV*}
  \begin{align}
    V & = \{ z \in W:  [Q^t z]_\ell \big|_{ \din K \cup \db^{-,\ell} K} = 0,
    \text{ for all } \ell=1,\ldots, m\},
    \\
    V^* & = \{ z \in W:  [Q^t z]_\ell \big|_{ \dout K \cup \db^{+,\ell} K} = 0,
    \text{ for all } \ell=1,\ldots, m\},
  \end{align}
  \end{subequations}
  are well-posed.
\end{theorem}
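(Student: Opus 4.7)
The plan is to diagonalize $C = Q\vL Q^t$ via the orthogonal change of variable $\hu = Q^t u$, reducing the system to $m$ decoupled scalar advection equations to each of which Theorem~\ref{thm:advect} applies. Setting $u = Q\hu$ and using the chain rule, $Au = Q(\d_t \hu + \vL\,\d_x \hu)$, so componentwise we obtain the scalar operators $A_\ell \hu_\ell = \d_t \hu_\ell + \lambda_\ell \d_x \hu_\ell$ for $\ell = 1,\ldots,m$, each of exactly the form studied in \S\ref{ssec:advect} with advection speed $\lambda_\ell$. Since $Q$ is orthogonal, $u \mapsto Q^t u$ is an $L^2(K)^m$-isometry that also preserves the graph norm, so the system graph space $W$ is isometrically isomorphic to the product $\prod_\ell W_\ell$ of the scalar graph spaces $W_\ell = \{\hz \in L^2(K) : A_\ell \hz \in L^2(K)\}$.

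Next I would verify assumption~\eqref{asm:inoutassume} for each $A_\ell$ on the same tent $K$. Since $n_t I + n_x C = Q(n_t I + n_x \vL)Q^t$, negative (resp.\ positive) definiteness of $n_t I + n_x C$ on $\din K$ (resp.\ $\dout K$) is equivalent to $n_t + n_x \lambda_\ell < 0$ (resp.\ $>0$) for every $\ell$. Consequently, for each $\ell$, the inflow and outflow parts of $\d K$ with respect to $A_\ell$ include $\din K$ and $\dout K$, while the vertical part $\db K$ splits into $\db^{+,\ell}K \cup \db^{-,\ell}K \cup \db^{0,\ell}K$ as defined above. Theorem~\ref{thm:advect} applied to each $A_\ell$ then produces continuous traces onto $\overline{\din K \cup \db^{-,\ell}K}$ and $\overline{\dout K \cup \db^{+,\ell}K}$, scalar kernel subspaces $V_\ell$ and $V_\ell^*$ satisfying~\eqref{eq:V}, and scalar bounds of the form~\eqref{eq:AtAbelow}.

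Pulling everything back through $Q^t$, the description~\eqref{eq:VV*} says precisely that the $\ell$-th component of $Q^t z$ lies in $V_\ell$ (respectively $V_\ell^*$), so under the isometry $u \mapsto Q^t u$ the system spaces $V$ and $V^*$ correspond to $\prod_\ell V_\ell$ and $\prod_\ell V_\ell^*$. The system boundary operator $D$ then diagonalizes into the direct sum of the scalar $D_\ell$, and the validity of~\eqref{eq:V} and~\eqref{eq:AtAbelow} for the system is a componentwise assembly of the scalar statements; Theorems~\ref{thm:bddbelow} and~\ref{thm:wellposedness-V} deliver the claimed well-posedness. The main bookkeeping obstacle I anticipate is checking that the $\ell$-dependent splitting of $\db K$ is compatible under $Q$-conjugation with the trace kernel description in~\eqref{eq:VV*} and that the isometry genuinely intertwines $D$ with $\bigoplus_\ell D_\ell$; once this is in place, everything reduces cleanly to the scalar case already handled in Theorem~\ref{thm:advect}.
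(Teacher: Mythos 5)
Your proposal is correct and follows the paper's strategy in its essentials: diagonalize $C=Q\vL Q^t$, observe that $Q$-conjugation is an isometry of $L^2(K)^m$ intertwining the graph norms so that $W$, $V$, $V^*$ and $D$ decompose componentwise, check that~\eqref{asm:inoutassumesystem} gives~\eqref{asm:inoutassume} for each scalar operator $\d_t+\lambda_\ell\d_x$ with the $\ell$-dependent splitting of $\db K$, and then invoke Theorem~\ref{thm:advect} componentwise. Where you diverge is the recombination step: the paper never reassembles the hypotheses~\eqref{eq:V} and~\eqref{eq:AtAbelow} at the system level; instead it writes the system weak formulation in the transformed variables and observes that it splits into $m$ decoupled scalar weak formulations of the form~\eqref{eq:uwp-V}, each of which is well posed by the final assertion of Theorem~\ref{thm:advect}, and then maps the solution back through $Q$. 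Your version instead verifies~\eqref{eq:V} and~\eqref{eq:AtAbelow} for the product spaces and reapplies Theorems~\ref{thm:bddbelow} and~\ref{thm:wellposedness-V}; this works (the annihilator conditions and lower bounds do pass to finite products, as you indicate), but note one small overreach: Theorem~\ref{thm:advect} as stated supplies only trace continuity, the extra continuity~\eqref{eq:29}, and well-posedness --- it does not assert that~\eqref{eq:V} and~\eqref{eq:AtAbelow} hold for the advection operator on the tent $K$ itself. To close your argument you must transfer those two properties from Theorem~\ref{thm:multidim_dt} on $\hat K$ back to $K$ via the volume-preserving affine map $H$ used in the proof of Theorem~\ref{thm:advect} (which preserves the $L^2$ pairing, the graph norm, and the $D$-pairing), or else bypass the issue entirely by decoupling the weak formulation as the paper does. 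With that one supplementary observation your proof is complete; the paper's route buys a slightly shorter argument by reusing the scalar well-posedness wholesale, while yours makes the system-level verification of the abstract hypotheses explicit.
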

\begin{proof}
  Let $\bA = Q^t A Q,$
  $\bW = \{ \bv \in L^2(K)^m: \bA \bv \in L^2(K)^m\},$ and $\bD$ be
  the corresponding boundary operator on $\bW$. Then clearly,
  $v \in W$ if and only if $\bv = Q^t v$ is in $\bW$.  Moreover,
  $\bA \bw = \d_t \bw + \vL \d_x \bw$, i.e., its $\ell$th component
  equals
  \[
  \bA_\ell \bw_\ell \equiv
  \d_t \bw_\ell + \lambda_\ell \d_x\bw_\ell.
  \]
  Note that $\bA_\ell$ is a Friedrichs operator on $K$ of the
  form~\eqref{eq:17} and has its associated graph space $\bW_\ell$ and
  boundary operator $\bD_\ell$.

  Now, the assumptions of~\eqref{asm:inoutassumesystem} imply that
  \eqref{asm:inoutassume} holds for each $\bA_\ell$ (with
  $\alpha = \lambda_\ell$) so Theorem~\ref{thm:advect} yields the
  continuity of the maps
  $ \btin^\ell: \bw \mapsto \bw_\ell|_{\Gin^\ell} $ and
  $ \btout^\ell: \bw \mapsto \bw_\ell|_{\Gout^\ell}$ on~$\bW$, where
  $\Gin^\ell = \din K \cup \db^{-,\ell} K$ and
  $\Gout^\ell = \dout K \cup \db^{+,\ell} K$. Therefore, the full
  trace maps $\btin = (\btin^1,\ldots, \btin^m)$ and
  $\btout = (\btout^1,\ldots, \btout^m)$ are continuous on $\bW$.  Set
  $\bV = \ker(\btin)$ and $\bV^* = \ker( \btout).$ Then the following 
  variational equation for $\bu \in L^2(K)^m$ and $\bq = \bD \bz$ 
  with $\bz \in \bV$, 
  \begin{equation}
    \label{eq:27}
  -(\bu,\bA \bv)_{L^2(K)^m} + \ip{ \bq, \bv }_{\bW} = F(\bv),\qquad
  \forall \bv \in \bW,
  \end{equation}
  splits into $m$ decoupled equations, namely
  \begin{equation}
    \label{eq:28}
  -(\bu_\ell,\bA_\ell \bv_\ell)_{L^2(K)} + \ip{ \bD_\ell\bz_\ell, \bv_\ell }_{\bW_\ell} 
  = F(\bv_\ell),\qquad
  \forall \bv_\ell \in \bW_\ell, \quad\forall \ell=1,\ldots, m.
  \end{equation}
  Here $\bu_\ell \in L^2(K)$ and
  $\bz_\ell \in \bV_\ell \equiv \ker(\btin^\ell)$ are the $\ell$th
  components of $\bu$ and $\bz$, respectively.  By
  Theorem~\ref{thm:advect}, there is a unique $\bu_\ell \in L^2(K)$
  and $\bq_\ell =\bD_\ell \bz_\ell \in (\bV_\ell^*)^\perp$
  solving~\eqref{eq:28} for each $\ell$. This in turn proves the
  wellposedness of~\eqref{eq:27}.

  To transfer these results for  $\bA$ to $A$, we define 
  \[
  \tin w = \btin (Q^t w),\qquad
  \tout w = \btout (Q^t w).
  \]
  Then~\eqref{eq:VV*} is the same as 
  $V = \ker(\tin)$ and $V^*=\ker(\tout)$. Note that $z \in V$ if
  and only if $\bz = Q^t z \in \bV.$ Also note that a $\bu \in L^2(K)^m$ and
  $\bz \in \bV$ solves~\eqref{eq:27} if and only if $u = Q\bu$ and
  $z = Q\bz$ satisfies
  \[
  -(u, Av)_{L^2(K)^m} + \ip{ Dz, v }_W = F(Q^t v),\qquad
  \forall v \in W.
  \]
  Here we have used $(\bu, \bA \bv)_{L^2(K)^m} = (u, A v)_{L^2(K)^m}$
  and consequent identities for the corresponding boundary
  operators. Thus the stated wellposedness of~\eqref{eq:uwp-V} follows
  from the established wellposedness of~\eqref{eq:27}. The proof of
  wellposedness of~\eqref{eq:uwp-V-adj} is similar.
\end{proof}

\begin{remark}
  Consider a tent $K$ with empty $\db K$. Then, under the assumptions
  of Theorem~\ref{thm:hyperbol}, a function in $V$ has all its $m$
  components equal to zero on the inflow boundary $\din K$. Moreover,
  if $v \in V \cap C(\bar K)$, then applying the additional continuity
  property~\eqref{eq:29} to the operators $\btout^\ell$ in the above
  proof, we find that the outflow trace of each component of $v$ must
  approach zero as we approach $\Gio$ where the inflow and outflow
  boundary parts meet.
\end{remark}

\section{The wave equation}   \label{sec:wave}

We now apply the previous ideas to the important example of the wave
equation and work out the resulting weak formulation in detail. Our model
problem is to find a real-valued function $\phi$ on the space-time
domain $\om = (0,S) \times (0, T)$, satisfying
\begin{subequations} \label{eq:waveIBVP}
\begin{align}
  c^{-2} \d_{tt} \phi - \d_{xx} \phi
  & = g,  && 0<x < S, \; 0 < t < T, \\
  \d_t \phi = \phi & = 0, && t=0, \; 0 < x < S, \\
  \label{eq:imped1}
  \d_t \phi - c \,\d_x \phi & = 0, && x=0, \; 0 < t < T, \\
  \label{eq:imped2}
  \d_t \phi + c \,\d_x \phi & = 0, && x=S, \; 0 < t < T,
\end{align}
\end{subequations}
where $c>0$ is the wave speed.  Here, we have imposed the outgoing
impedance boundary conditions (but other boundary conditions can also
be considered -- see Section~\ref{sec:numerical}).

The above second order system for $\phi$ arises from a system of first order 
physical principles, which also matches 
the form of the problems we have been studying, 
namely~\eqref{eq:Au=f}. Set 
\[
u =
\begin{bmatrix}
  c\,\d_x \phi \\
  \d_t \phi
\end{bmatrix}
\]
and observe that $\d_t u_1 = c\, \d_{xt}\phi = c\,\d_x u_2$ and
$\d_t u_2 =\d_{tt}\phi = c\, \d_x u_1 + c^2 g$. These two equations
give the first order system $ Au = f$ where
\begin{equation}
  \label{eq:Awave}
  A u = \d_t u -
  \begin{bmatrix}
    0 & c \\
    c & 0
  \end{bmatrix}
  \d_x u,
  \qquad
  f =
  \begin{bmatrix}
    0\\ c^2 g
  \end{bmatrix}.  
\end{equation}
It fits into the framework of~\S\ref{ssec:1dgeneral} after the
diagonalization
\[
C \equiv
  -\begin{bmatrix}
    0 & c \\
    c & 0
  \end{bmatrix}
 = Q \Lambda Q^t, \quad
Q = \frac{1}{\sqrt{2}}
  \begin{bmatrix}
    1 & \phantom{-}1 \\
    1 & -1
  \end{bmatrix},
\quad
\Lambda =
\begin{bmatrix}
  \lambda_1 & \phantom{-}0 \\
  0 & \lambda_2
\end{bmatrix}.
\]
where $\lambda_1 =-c$ and $\lambda_2=c$.

Analogous to \eqref{eq:dtoutinbdry}, we define $\din \om = (0,S) \times \{0\}$,
$\dout \om = (0,S) \times \{T\}$ and $\db \om = \d \om \setminus (
\din \om \cup \dout \om).$ The vertical parts $\db \om$ are further
split into
\[
\db^{+,1}\om= \db^{-,2}\om = \{0\} \times [0,T],
\qquad
\db^{-,1}\om= \db^{+,2}\om = \{S\} \times [0,T].
\]
Set $\Gin^\ell$ and $\Gout^\ell$ to  the closures of
$ \din\om \cup \db^{-,\ell}\om$ and $\dout\om \cup \db^{+,\ell}\om$, respectively,
$\Gio^\ell = \Gin^\ell \cap \Gout^\ell$, and 
$\delta_\ell(x,t) = \dist( (x,t), \Gio^\ell)$ for $\ell=1,2.$ 
By a minor modification of the arguments in
Section~\ref{sec:examples}, one can prove that the global trace maps
\begin{align*}
\tin
\begin{bmatrix}
  z_1 \\ z_2
\end{bmatrix}
& =
\begin{bmatrix}
  (z_1 + z_2) \big|_{ \Gin^1} \\
  (z_1 - z_2) \big|_{ \Gin^2}
\end{bmatrix},
\qquad
\tin: W \to L^2_{\delta_1}(\Gin^1) \times L^2_{\delta_2}(\Gin^2)
\\
\tout
\begin{bmatrix}
  z_1 \\ z_2
\end{bmatrix}
& =
\begin{bmatrix}
  (z_1 + z_2) \big|_{ \Gout^1} \\
  (z_1 - z_2) \big|_{ \Gout^2}
\end{bmatrix},
\qquad
\tout: W \to
L^2_{\delta_1}(\Gout^1) \times L^2_{\delta_2}(\Gout^2)
\end{align*}
are continuous. Set
\[
V(\om) = \ker(\tin), \qquad V^*(\om) = \ker (\tout).
\]
These spaces can be used to give a global weak formulation on $\om$,
but our focus in on local solvers.

In space-time tent pitching methods, we are required to numerically
solve the wave equation on space-time tents, ordered so that inflow
data on a tent can be provided by the outflow solution on previously
handled tents or through given data. Hence we now focus on the
formulation and discretization on one tent~$K$.

\subsection{Weak formulation on a tent}

Consider the analogue of~\eqref{eq:waveIBVP} on one tent $K$, with zero
initial data on the inflow boundaries and with boundary conditions
inherited from the global boundary
conditions~\eqref{eq:imped1}--\eqref{eq:imped2}.

Define, as before, the boundary parts of a tent $K$, by
\begin{gather*}
\din K    = \{ (x,t) \in \d K: n_t<0\},
\quad
\dout K   = \{ (x,t) \in \d K: n_t>0\},
\quad
\db K     = \d K \setminus ( \din K \cup \dout K),
\\
\db^{+,1} K =
\db^{-,2} K
 = \{ (x,t) \in \db K: c n_x <0\},
\\
\db^{+,2} K = \db^{-,1} K
 = \{ (x,t) \in \db K: c n_x >0\}.
\end{gather*}  
Note that the boundary part $\db K$ may be empty in some tents. We
consider the tent problem of solving for $u$ satisfying
\begin{align*}
  Au & = f  && \text{ on } K, 
  & 
  u_1 - u_2 & = 0 &&\text{ on } \db^{+,1} K,
  \\
  u & = 0 && \text{ on } \din K,
  & 
  u_1 + u_2 & = 0 && \text{ on } \db^{+,2} K.
\end{align*}
To obtain a well-posed weak formulation on one tent, we proceed to use
Theorem~\ref{thm:hyperbol}.

To this end, we must assume that the tent satisfies
Assumption~\eqref{asm:inoutassumesystem}, which now reads
\begin{subequations}
  \label{asm:inout-wave}
\begin{align}
  \din K & \subseteq \{ x \in \d K:  n_t   \pm n_x c <0\},
  \\
  \dout K & \subseteq \{ x \in \d K: n_t   \pm  n_x c>0\}.
\end{align}
\end{subequations}
Since $\At = -A$ in this example, the weak
formulation~\eqref{eq:uwp-V} reads
\begin{equation}
  \label{eq:12}
  \begin{aligned}
    u \in L, \; q \in \Vsp:
    \qquad &-(u,A v)_L + \ip{ q,v }_W = F(v),\qquad \forall v \in W,
  \end{aligned}
\end{equation}
where the spaces are set following~\eqref{eq:VV*}, namely
\begin{align*}
    V & = \left\{
    \begin{bmatrix}
      z_1 \\ z_2
    \end{bmatrix}
    \in W:
      \begin{bmatrix}
        z_1+ z_2 \\ z_1 - z_2
      \end{bmatrix}_\ell\bigg|_{ \din K \cup \db^{-,\ell} K}
      = 0, \quad
    \text{ for } \ell=1,2\right\},
    \\
    V^* & = \left\{
    \begin{bmatrix}
      z_1 \\ z_2
    \end{bmatrix}\in W:
      \begin{bmatrix}
        z_1+ z_2 \\ z_1 - z_2
      \end{bmatrix}_\ell\bigg|_{ \dout K \cup \db^{+,\ell} K} = 0, \quad
    \text{ for } \ell=1,2
    \right\}.
\end{align*}
Theorem~\ref{thm:hyperbol} shows that~\eqref{eq:12} is a well-posed
weak formulation on $K$ provided the tent $K$
satisfies~\eqref{asm:inout-wave}.  Note that the above spaces change
from tent to tent and may arguably be better denoted by
$V(K), V^*(K)$, etc., but to avoid notational bulk we will suppress
the $K$-dependence.

\subsection{CFL condition}

Let us take a closer look at~\eqref{asm:inout-wave}.  First note that
each tent, in this application, consists of either two triangles (on
either side of the tent pole), or just one triangle. The tents are thus
divided into three types, as shown in Figure~\ref{fig:macroelt}.

\begin{figure}
  \centering
  \parbox{0.29\textwidth}{
  \begin{tikzpicture}

    \coordinate (top) at  (0,4);
    \coordinate (bot) at  (0,0);
    \coordinate (lft) at  (-1.6,1);
    \coordinate (lftp) at (0,   1);
    \coordinate (rgt) at  (1.4,1.7);
    \coordinate (rgtp) at (0,  1.7);
    \fill (top) circle (2pt);
    \fill (bot) circle (2pt);
    \fill (lft) circle (2pt);
    \fill (rgt) circle (2pt);

    \draw (top) -- (lft) -- (bot) -- (rgt) -- cycle;
    \draw[<->,dotted] (lft) -- (lftp) node [midway,above] {$h_l$};
    \draw[<->,dotted] (rgt) -- (rgtp) node [midway,above] {$h_r$};
    \draw[<->,dotted] (top) -- (bot)  node [midway,left] {$k$};
    \draw[<->,dotted] ($(lftp)-(2,0)$) -- ($(top)-(2,0)$)
                      node [midway,right] {$p_l k$};
    \draw[<->,dotted] ($(rgtp)+(2,0)$) -- ($(top)+(2,0)$)
                      node [midway,left] {$p_r k$};
  \end{tikzpicture}

  Type~I: $h_r>0, h_l >0$
  }\qquad
  \parbox{0.25\textwidth}{
  \begin{tikzpicture}

    \coordinate (top) at  (0,4);
    \coordinate (bot) at  (0,0);
    \coordinate (rgt) at  (1.4,2.2);
    \coordinate (rgtp) at (0,  2.2);
    \fill (top) circle (2pt);
    \fill (bot) circle (2pt);
    % \fill (lft) circle (2pt);
    \fill (rgt) circle (2pt);

    \draw (top) -- (bot) -- (rgt) -- cycle;
    %\draw[<->,dotted] (lft) -- (lftp) node [midway,above] {$h_l$};
    \draw[<->,dotted] (rgt) -- (rgtp) node [midway,above] {$h_r$};
    \draw[<->,dotted] (top) -- (bot)  node [midway,left] {$k$};
    % \draw[<->,dotted] ($(lftp)-(2,0)$) -- ($(top)-(2,0)$)
    %                   node [midway,right] {$p_l k$};
    \draw[<->,dotted] ($(rgtp)+(2,0)$) -- ($(top)+(2,0)$)
                      node [midway,left] {$p_r k$};
  \end{tikzpicture}

  Type~L: $h_r>0, h_l =0$
  }\qquad
  \parbox{0.26\textwidth}{
  \begin{tikzpicture}

    \coordinate (top) at  (0,4);
    \coordinate (bot) at  (0,0);
    \coordinate (lft) at  (-1.6,1);
    \coordinate (lftp) at (0,   1);
    \fill (top) circle (2pt);
    \fill (bot) circle (2pt);
    \fill (lft) circle (2pt);

    \draw (top) -- (lft) -- (bot) -- cycle;
    \draw[<->,dotted] (lft) -- (lftp) node [midway,above] {$h_l$};
    \draw[<->,dotted] (top) -- (bot)  node [midway,left] {$k$};
    \draw[<->,dotted] ($(lftp)-(2,0)$) -- ($(top)-(2,0)$)
                      node [midway,right] {$p_l k$};
  \end{tikzpicture}

  Type~R: $h_r=0, h_l >0$
  }\quad
  \caption{Three types of tents}
  \label{fig:macroelt}
\end{figure}
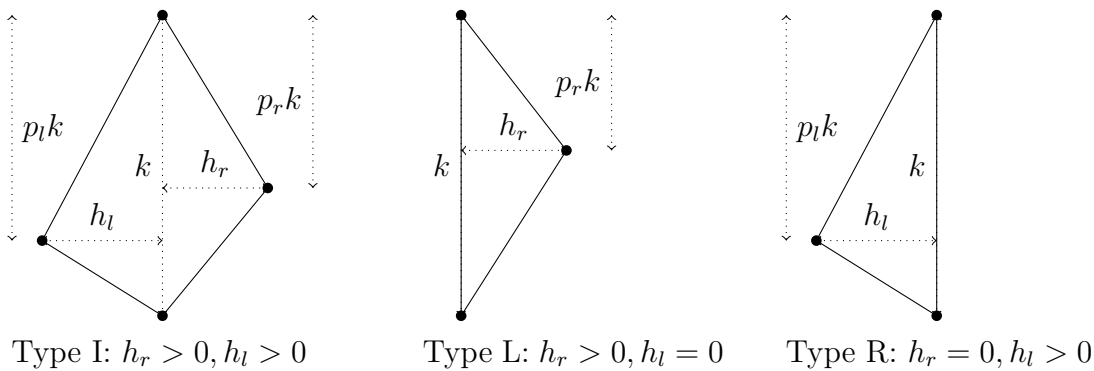

The length of the tent pole is $k$, the numbers $p_l$ and $p_r$ are
such that $p_lk$ and $p_rk$ give the heights of the outflow boundaries
on the left and right side of the tent pole, respectively, and the
spatial mesh size are $h_r, h_l\ge 0$. Writing down the normal vector
on the tent boundaries, we immediately find that
condition~\eqref{asm:inout-wave} on a tent is equivalent to
\begin{equation}
  \label{eq:cfl-2}
\left| c \frac{ k p_r}{ h_r} \right| <1
\quad\text{ and }\quad
\left| c \frac{ k p_l}{ h_l} \right| <1.
\end{equation}
Clearly, by controlling the size of the tent pole we can satisfy
these inequalities.

The well-known Courant-Friedrichs-Levy (CFL)
condition~\cite{CouraFriedLewy28} identifies stability conditions as
constraints on the time step size in terms of space mesh size in
numerical discretizations. In our case, this condition manifests
itself as geometrical constraints~\eqref{eq:cfl-2} on the tent.  For
this reason, we will refer to~\eqref{asm:inout-wave}
-- or~\eqref{eq:cfl-2} -- as the CFL condition of our method.

\section{The numerical scheme}   \label{sec:scheme}

In this section, continuing to work with the wave operator $A$ defined
by~\eqref{eq:Awave}, we give an explicit numerical scheme for
approximating $u(x,t)$ satisfying
\begin{subequations}  \label{eq:theproblem}
  \begin{align}
    A u & = f,     &&  0<x<S, \; 0<t<T,
    \\
    u_i(x,0) & = u_i^0(x), && 0< x< S, \; i=1,2,
   \\
    u_1(0,t) - u_2(0,t) & = 0, && 0< t< T,
    \\                       
    u_1(1,t) + u_2(1,t) & = 0,  && 0< t< T.    
  \end{align}
\end{subequations}
The scheme will allow varying spatial and temporal mesh sizes.  Here $f$ and
$u^0$ are assumed to given smooth functions. We begin by describing
the calculations within each tent, followed by the tent pitching
technique to advance in time.

\subsection{Conforming discretization on a tent}   \label{ssec:confomingtent}

As seen above, a tent is comprised of one or two triangles. Let the
space of continuous functions on a tent $K$ whose restrictions to
these triangles are linear be denoted by $P_1^h(K)$.  We construct a
conforming discretization of~\eqref{eq:12} within $K$ using the
discrete space
\begin{equation}
  \label{eq:13}
V_1 = V \cap ( P_1^h(K) )^2.
\end{equation}
By definition, $V_1 \subseteq V$, and consequently, functions in $V_1$ must
satisfy the essential boundary conditions of $V$.  Depending on the
tent geometry, different boundary conditions must be imposed on
different tents.

To examine what this entails for the nodal coefficients on mesh
vertices, let $\zeta \in P_1^h(K)$ be the continuous scalar function
(unique Lagrange basis function) that equals one at the ``apex'' of
the tent $K$, equals zero at all its other vertices. The {\em apex} of
a tent, irrespective of whether it consists of one or two triangles,
is the vertex in $\dout K$ that is away from $\din K$.  Now, suppose
$\mu
\equiv [\begin{smallmatrix} \mu_1 \\
  \mu_2
\end{smallmatrix}]$ in $\RRR^2$ is such that 
\begin{subequations}
  \label{eq:mu}
\begin{align}
  \mu_1 - \mu_2 & =0 && \text{ if $\db^{+,1} K$
                       is nonempty},
\\
  \mu_1 + \mu_2 & = 0 && \text{ if $\db^{-,1} K$
                       is nonempty}.
\end{align}
\end{subequations}
(Note that if $\db K$ is empty, then $\mu$ is an arbitrary vector in
$\RRR^2$.)  Then, it is easy to see that
\begin{equation}
  \label{eq:14}
V_1 = \left\{ \mu \zeta:  \; \mu \text{ satisfies~\eqref{eq:mu}}
 \right\}
\end{equation}
provides an alternate characterization of~\eqref{eq:13}.

A computable conforming discretization of~\eqref{eq:12} additionally
requires finite-dimensional subspaces of $L$ and $\Vsp$. For the
latter, observe that~\eqref{eq:10} implies that
\[
D(V_1) \subset D(V) = \Vsp.
\]
Hence we choose an approximation $q_1$
of the solution component $q$ in~\eqref{eq:12} to have the form 
\[
q_1 = D z_1, \qquad z_1 \in V_1.
\]
Then $q_1$ is clearly in $\Vsp.$ Next, set $L_1 \subset L$ to be the space
of vector functions whose components are constant functions on
$K$. Finally, set
\begin{equation}
  \label{eq:W}
  W_1 = \left\{ w: \;
  w = \kappa
  + \mu
  \zeta, \quad \kappa, \mu \in \RRR^2, \quad
  \kappa \zeta \in V_1
  \right\}.
\end{equation}

Our discretization of~\eqref{eq:12} now takes the following form: Find
$u_1$ in $L_1$ and $q_1 \in D(V_1)$ satisfying
$ -(u_1,A w) + \ip{ q_1, w} = F(w),$ for all $w \in W_1.$ Clearly,
$\dim(W_1)$ is four or three, depending on whether $\db K$ is empty or
not. This equation gives rise to an invertible discrete system, as a
consequence of the unisolvency of the following slightly modified
problem:
\begin{equation}
  \label{eq:discrete}
  \begin{aligned}
    \text{\em Find $u_1\in L_1$} &\text{ \em and $z_1 \in V_1$
      such that}
    \\
    \qquad &-(u_1,A w) + \ip{ D z_1, w} =
    F(w),
    \qquad
    \forall w \in  W_1.
  \end{aligned}
\end{equation}

\begin{proposition}
  There is a unique solution for Problem~\eqref{eq:discrete}.
\end{proposition}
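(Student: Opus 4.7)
The plan is to verify that~\eqref{eq:discrete} is a square finite-dimensional system and then prove injectivity by testing against two families of functions in $W_1$. Using~\eqref{eq:W}, I would first write the direct-sum decomposition
\[
W_1 = \bigl\{\kappa\in\RRR^2 : \kappa\zeta\in V_1\bigr\} \;\oplus\; \{\mu\zeta : \mu\in\RRR^2\},
\]
in which the constraint $\kappa\zeta\in V_1$ amounts to~\eqref{eq:mu}. Consequently $\dim W_1 = \dim V_1 + 2 = \dim(L_1\times V_1)$, so~\eqref{eq:discrete} is square, and unique solvability is equivalent to showing that any pair $(u_1,z_1)\in L_1\times V_1$ satisfying the homogeneous equation $-(u_1,Aw)_L + \ip{Dz_1,w}_W = 0$ for all $w\in W_1$ must vanish.

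Next, I would write $z_1 = \nu\zeta$ with $\nu\in\RRR^2$ admissible per~\eqref{eq:mu}, and test against each summand of $W_1$ separately. Testing with $w = \kappa$ satisfying~\eqref{eq:mu}, the term $(u_1,Aw)_L$ vanishes because $A\kappa=0$, and a direct computation via~\eqref{eq:D} gives
\[
\ip{Dz_1,\kappa}_W \;=\; \nu^T(I_1 I + I_2 C)\kappa \;=\; 0,
\]
where $I_1 := \int_K \d_t\zeta\,dx\,dt$ and $I_2 := \int_K \d_x\zeta\,dx\,dt$. Testing with $w = \hat\mu\zeta$ for $\hat\mu\in\RRR^2$ free, the constancy of $u_1$ turns the equation into the vector identity
\[
(I_1 I + I_2 C)\,u_1 \;=\; (J_1 I + J_2 C)\,\nu,
\]
with $J_1 := \int_K \d_t(\zeta^2)\,dx\,dt$ and $J_2 := \int_K \d_x(\zeta^2)\,dx\,dt$.

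The closing step is to use the CFL condition~\eqref{eq:cfl-2}, equivalently the strict definiteness of $n_tI+n_xC$ on $\din K\cup\dout K$ expressed in~\eqref{asm:inout-wave}, to show that $I_1I+I_2C$ is invertible on the admissible subspace determined by~\eqref{eq:mu}. Since the spectrum of $C$ is $\{\pm c\}$, the eigenvalues of $I_1I+I_2C$ are $I_1\pm cI_2$; the CFL yields strict scalar inequalities among the geometric integrals $I_1$ and $I_2$ that keep the eigenvalue associated with the admissible eigenvector bounded away from zero. This forces $\nu=0$ from the first family of tests, hence $z_1=0$, and the second family then reduces to $(I_1I+I_2C)u_1=0$, yielding $u_1=0$.

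The hard part will be verifying these invertibility inequalities uniformly across the three tent types of Figure~\ref{fig:macroelt}. In Type~I (empty $\db K$), the pointwise bound $n_t>c|n_x|$ on $\dout K$ alone delivers $I_1>c|I_2|$. In Types~L and~R, the vertical-side contributions enter $I_2$ with a definite sign (since $n_x=\pm 1$ on $\db K$), and the analogous strict inflow inequality $|n_t|>c|n_x|$ on $\din K$ must also be invoked; in a tent-pitching scheme this latter condition is inherited from the CFL of the adjacent previously-handled tent, ensuring the eigenvalue of $I_1I+I_2C$ in the admissible direction remains nonzero.
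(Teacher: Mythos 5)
Your overall skeleton is sound and is essentially a linear-algebra reformulation of the paper's argument: the paper also counts dimensions to get a square system, but then tests with $w=\alpha$ (the coefficient vector of $z_1$) and with $w=u_1\zeta$, obtaining the sign-definite boundary quadratic form $\int_{\dout K\cup\db K}\Dd\alpha\cdot\alpha\,\zeta$ with $\Dd$ as in~\eqref{eq:Dd}, rather than your matrix $I_1I+I_2C$. However, your treatment of what you yourself call the hard part has genuine gaps. First, the fact that makes everything work is that $\zeta$ vanishes identically on $\din K$ (the apex is the only vertex not lying on the inflow boundary), so by the divergence theorem $I_1I+I_2C=\int_{\dout K\cup\db K}\Dd\,\zeta\,ds$ and the inflow face contributes nothing to $I_1,I_2$. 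You never state this; without it your claim that for Type~I the outflow bound ``alone'' gives $I_1>c|I_2|$ is unjustified, and if you instead tried to control a putative inflow contribution by the inflow inequality of~\eqref{asm:inout-wave}, those terms would enter with the \emph{unfavorable} sign, so your sign analysis as written does not close.

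Second, for boundary tents you have the two eigendirections interchanged. The admissible direction fixed by~\eqref{eq:mu} (e.g.\ $(1,1)^T$ for a tent on the left boundary, where $n_x=-1$ on $\db K$) is the eigenvector of $\Dd$ with eigenvalue $n_t-cn_x=c>0$ on $\db K$, so the corresponding eigenvalue of $I_1I+I_2C$ is positive from the outflow CFL plus the lateral face alone; no inflow inequality is needed to conclude $\nu=0$. Where you genuinely need more is the step $(I_1I+I_2C)u_1=0\Rightarrow u_1=0$: since $u_1\in L_1$ is \emph{unconstrained}, you need both eigenvalues $I_1\pm cI_2$ nonzero, and in the complementary direction the lateral face contributes with the bad sign ($n_t+cn_x=-c$ on a left tent). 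For a Type~L tent that eigenvalue works out to $\tfrac12\bigl(h_r+ckp_r-ck\bigr)$, whose positivity is exactly the inflow half of~\eqref{asm:inout-wave} for this tent, i.e.\ $h_r>ck(1-p_r)$, and is \emph{not} implied by the outflow inequality $h_r>ckp_r$ of~\eqref{eq:cfl-2} alone. This hypothesis is available because~\eqref{asm:inout-wave} is assumed for the tent itself; your appeal to ``the CFL of the adjacent previously-handled tent'' is not a legitimate device in a proposition about the single-tent problem~\eqref{eq:discrete}, and in any case you attach the inflow inequality to the wrong eigenvalue. With these two repairs ($\zeta|_{\din K}=0$, and the correct eigenvalue bookkeeping invoking the full~\eqref{asm:inout-wave}) your route closes and parallels the paper's proof.
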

\begin{proof}
  Note the $\dim(L_1) + \dim( V_1) = \dim(W_1)$,
  so~\eqref{eq:discrete} gives a square (Petrov-Galerkin)
  system.  Hence it suffices to set $F=0$ and
  prove that $u_1=z_1=0$. With $F=0$, writing $z_1 = \alpha \zeta$ for
  some $\alpha \in \RRR^2$, we have $\ip{ D (\alpha \zeta), w} =0$ for
  all constant $w \in W$. Since $\alpha \zeta \in V_1$, by the
  definition of $W_1$, we may set $w = \alpha$ in~\eqref{eq:discrete},
  to get
  \[
  \ip{ D (\alpha \zeta), \alpha }
  = \int_{\dout K \cup \db K} \Dd \alpha \cdot \alpha \zeta
  = 0
  \]
  where
  \begin{equation}
    \label{eq:Dd}
    \Dd = n_t I + C n_x =
    \begin{bmatrix}
      n_t & -cn_x \\
      -cn_x& n_t
    \end{bmatrix}.
  \end{equation}
  If $\db K$ is empty, then since $ \Dd
  \alpha \cdot \alpha = (\alpha_1 + \alpha_2)^2 (n_t - cn_x)/2 +
  (\alpha_1 - \alpha_2)^2 (n_t + cn_x)/2,$ the CFL
  condition~\eqref{asm:inout-wave} gives $\alpha=0$.  If $\db K$ is
  nonempty, then whenever $\alpha \zeta \in V$ we have either
  $\alpha_1 - \alpha_2 =0$ or $\alpha_1 + \alpha_2 =0$, so we can
  continue to conclude that $\alpha=0$. Of course $\alpha=0$ implies $z_1=0$.

  To prove that $u_1=0$, we use~\eqref{eq:D} after substituting $z_1=0$, to get
  \[
  \ip{ D w, u_1} =0, \qquad \forall w \in W_1.
  \]
  Since $u_1$ is a constant function, $u_1 \zeta \in W_1$, so we may
  choose $w = u_1\zeta$ and conclude that $u_1=0$ by an argument
  analogous to what we used above.
\end{proof}

\begin{remark}
  One can view $z_1|_{\dout K}$ as an interface trace variable and
  $q_1 = Dz_1$ as an interface flux variable. By the trace theory
  we developed previously, outflow trace $z_1|_{\dout K}$ must vanish
  at the points where outflow and inflow edges meet in order for $z_1$
  to be in $V$. This motivates our choice~\eqref{eq:14} of $V_1$ to
  obtain a conforming method. Other non-conforming avenues to design
  approximations within a tent can be found in~\cite{FalkRicht99}
  and~\cite{MonkRicht05}.
\end{remark}

\subsection{Advancing in time by tent pitching}   \label{ssec:tent-pitch-algo}

We now show how the above ideas yield an explicit time marching
algorithm for solving~\eqref{eq:theproblem}.
First, we mesh the space-time domain $\om = (0,S) \times (0, T)$ by a
collection $\oh$ of tents $K$ with these properties: The first
property is that either $\db K$ is empty or
\begin{subequations}
  \label{eq:mesh}
  \begin{equation}
    \label{eq:21}
     \db K \subseteq \db\om,
  \end{equation}
  for all $K \in \oh$.
  Second, there exists an enumeration of all tents, $K_1, K_2,
  \ldots, K_J$, with the property that for each $j \in \{ 1,\ldots
  J\}$,
\begin{equation}
  \label{eq:downwind-2}
  \din K_j \subseteq \union_{k=1}^{j-1} \dout K_k \cup \din \om.
\end{equation}
Finally, for all $j \in \{ 1,\ldots J\}$,
\begin{equation}
  \label{eq:15}
  K_j \text{ satisfies the CFL condition~\eqref{eq:cfl-2}.}
\end{equation}
\end{subequations}
It is well-known how to construct an algorithm (not only in one space
dimension, but also in higher
dimensions~\cite{ErickGuoySulli05,UngorSheff02}) that produces meshes
satisfying~\eqref{eq:mesh}, so we shall not dwell further on the
meshing process.

The discrete space-time approximation on the mesh $\oh$ is developed using 
\begin{subequations}\label{eq:spaces}
\begin{align}
V_h & = \big\{ z
  \in  H^1(\om)^2 \cap V(\om) :
  \; z|_K \in P_1^h(K)^2,\; \forall K \in \oh, 
    \text{ and }      z(x,0) = I_h u^0(x) \big\}
\\
L_h & = \left\{
  \alpha : \; \alpha|_K \in \RRR^2 \text{ is constant on each } K \in \oh
\right\}
\\
W_h & = \left\{
  w : \; w|_K \in W_1 \text{ on each } K \in \oh
\right\},
\end{align}  
\end{subequations}
where $I_h$ denote the linear nodal interpolant on the spatial mesh.
The method finds approximations $u_h \in L_h$ and $z_h \in V_h$
satisfying
\begin{equation}
  \label{eq:composite}
\sum_{K \in \oh} \left(
-\int_K u_h \cdot A w + \int_{\d K} \Dd z_h \cdot w\right)
= \sum_{K\in \oh}  \int_K f \cdot w,
\qquad \forall w \in W_h,
\end{equation}
where $\Dd$ is as in~\eqref{eq:Dd}.

Because of~\eqref{eq:mesh}, we are able to use a time-marching
algorithm to solve~\eqref{eq:composite}: Proceed in the ordering
of~\eqref{eq:downwind-2}, and for each tent~$K$, solve for $u_h|_K$ and
$z_h|_K$. Specifically, if $\alpha$ is the nodal (vector) value of $z_h$ at the
apex of $K$, then defining $\zout = \alpha \zeta$, the problem on one
tent is to find $u_h|_K \in L_1$ and $\zout \in V_1$
satisfying~\eqref{eq:composite}, namely
\begin{equation}
  \label{eq:20}
-\int_K u_h \cdot A w + \int_{\d K} \Dd \zout  \cdot w
= \int_K f \cdot w - \int_{\d K} \Dd \zin  \cdot w,
\qquad\forall w \in W_1.
\end{equation}
where $\zin = z_h - \zout$.  Note that $\zin$ on right hand side will
be a known quantity if~\eqref{eq:downwind-2} holds and if we have
already solved on every $K'$ appearing before~$K$ in the ordering of
tents in~\eqref{eq:mesh}. Indeed, $\zin$ is completely determined by
its nodal values at (the three or two) vertices on $\din K$, which
either lie at $t=0$ or were apex vertices of previous
tents. Problem~\eqref{eq:20} is exactly of the same type we discussed
in \S~\ref{ssec:confomingtent}.

\subsection{Propagation formula}

Since the system~\eqref{eq:20} is small, we can explicitly calculate
its solution. To see how information is propagated from inflow to
outflow on a mesh of tents, we consider the case where the volume
source $f$ is zero. Write $z_h = \left[
  \begin{smallmatrix}
    z_{h,1} \\ z_{h,2}
  \end{smallmatrix}
\right]$ in~\eqref{eq:composite} and let the nodal values of the
scalar Lagrange finite element functions $z_{h,1}$ and $z_{h,2}$ be
$\left[
  \begin{smallmatrix}
    U^t\\ V^t
  \end{smallmatrix}
\right],
\left[
  \begin{smallmatrix}
    U^b\\ V^b
  \end{smallmatrix}
\right],
\left[
  \begin{smallmatrix}
    U^l\\ V^l
  \end{smallmatrix}
\right],
\left[
  \begin{smallmatrix}
    U^r\\ V^r
  \end{smallmatrix}
\right],$ at the top, bottom, left and right vertices, respectively,
of a tent of Type~I, as in Figure~\ref{fig:macroelt}. For the
other two tent types, we omit the nodal values at the
missing vertex.

Equation~\eqref{eq:20} finds $\left[
  \begin{smallmatrix}
    U^t\\ V^t
  \end{smallmatrix}
\right]$ as a function of the remaining nodal values. After tedious
simplifications (not displayed), this relationship is found to be as
follows:
\begin{align}
  \label{eq:solution}
  \begin{bmatrix}
    U^t \\ V^t
  \end{bmatrix}
  & =
  \begin{bmatrix}
    U^b \\ V^b
  \end{bmatrix}
  +
  w_1
  \begin{bmatrix}
    0 & c \\
    c & 0
  \end{bmatrix}
  \begin{bmatrix}
    U^r - U^l \\ V^r - V^l
  \end{bmatrix}
  + w_2 c
  \begin{bmatrix}
    U^r - U^l \\ V^r - V^l
  \end{bmatrix},
&& \text{ for Type~I},
\\ \nonumber
  \begin{bmatrix}
    U^t \\ V^t
  \end{bmatrix}
  & =
  \begin{bmatrix}
    U^b \\ V^b
  \end{bmatrix}
  +
  w_1
  \begin{bmatrix}
    0 & c \\
    c & 0
  \end{bmatrix}
  \begin{bmatrix}
    U^r - U^b \\ V^r - V^b
  \end{bmatrix}
  + w_2 c
  \begin{bmatrix}
    U^r - U^b \\ V^r - V^b
  \end{bmatrix},
&& \text{ for Type~L},
\\ \nonumber
  \begin{bmatrix}
    U^t \\ V^t
  \end{bmatrix}
  & =
  \begin{bmatrix}
    U^b \\ V^b
  \end{bmatrix}
  +
  w_1
  \begin{bmatrix}
    0 & c \\
    c & 0
  \end{bmatrix}
  \begin{bmatrix}
    U^b - U^l \\ V^b - V^l
  \end{bmatrix}
  + w_2 c
  \begin{bmatrix}
    U^b - U^l \\ V^b - V^l
  \end{bmatrix},
&& \text{ for Type~R},
\end{align}
where
\begin{align*}
  w_1 & = \frac{(h_r+h_l) k}{(h_r+h_l)^2 - c^2 k^2 (p_r-p_l)^2},
  &
  w_2 & = \frac{c\,k^2(p_r-p_l)}{(h_r+h_l)^2 - c^2 k^2 (p_r-p_l)^2},
  && \text{ for Type~I},
  \\
  w_1 & = \frac{k}{2 ( ck (1-p_r) + h_r) },
  &
  w_2 & = w_1,
  && \text{ for Type~L},
  \\
  w_1 & = \frac{k}{2 ( ck (1-p_l) + h_l) },
  &
  w_2 & = -w_1,
  && \text{ for Type~R}.
\end{align*}

\subsection{Error analysis on uniform grids}   \label{ssec:unifanal}

We now work out the stencil given by the method on a uniform grid
where all tents are shaped the same (see
Figure~\ref{fig:stencil}). The stencil translates~\eqref{eq:solution}
into an equation that gives the nodal values of the outflow apex
vertex, given the nodal values at the inflow vertices. Let $h>0$ be
the uniform spatial mesh size, $k>0$ be the time step size measured,
as before, by the height of the tent pole.  At a point $(jh/2, kn/2)$
in the lattice $(h/2) \ZZZ\times (k/2) \ZZZ$, let $(U_j^n, V_j^n)$
denote the nodal value of the approximation to $z_h$ there.  As shown
in Figure~\ref{fig:stencil}, the scheme uses only a subset of lattice
points in $h \ZZZ\times k \ZZZ$.  Each grid point involved in the
scheme has an associated $U$ value (indicated in the figure by
``\tikz\fill (0,0) circle (2pt);'') and a $V$ value (indicated by
``\tikz\draw (0,0) circle (2pt);'').

\begin{figure}
  \centering
  \begin{tikzpicture}
    \draw [step = 1cm,dotted] (-2,2) grid (9,7);
    % \draw [step = 1cm,dotted] (6,2) grid (7,3);

    \foreach \n in {1,2,3} {
      \foreach \j in {-1,0,1,2,3,4} {

        \fill (2*\j+1-0.05,2*\n) circle (2pt);
        \draw (2*\j+1+0.05,2*\n) circle (2pt);

        \fill (2*\j-0.05,2*\n+1) circle (2pt);
        \draw (2*\j+0.05,2*\n+1) circle (2pt);
      }
    }
    % \foreach \n in {1,2,3} {
    %   \foreach \j in {1,2,3} {

    %     \fill[red] (2*\j,2*\n) circle (2pt);
    %     \fill[blue] (2*\j+1,2*\n+1) circle (2pt);
    %   }
    % }

    % mark h and k
    \draw[<->] (2*4,2*1 -0.3) -- (2*4+1,2*1 -0.3)
                node[midway,below] {$h/2$};
    \draw[<->] (2*4+1 +0.3,2*1+1) -- (2*4+1 +0.3,2*1)
                node[midway,right] {$k/2$};

    % tent outline
    \filldraw[-,very thick,blue,opacity=0.4]
               (2*3-1,2*3) -- (2*3,2*3-1) --
               (2*3+1,2*3) -- (2*3,2*3+1) -- cycle;
    \draw[-,very thick,blue,opacity=0.4]
               (2*3,2*3-1) -- (2*3,2*3+1);
    \node at (2*3+1.4,2*3) {\rotatebox{-90}{tent}};

    % stencil
    \draw[-] (2*1-1,2*2) -- (2*1+1,2*2) ;
    \draw[-] (2*1,2*2-1) -- (2*1,2*2+1) ;

    \node at (2*1-1.7,2*2) {
      \footnotesize{
      $
      \begin{bmatrix}
        U_{j-1}^{n}\\
        V_{j-1}^{n}
      \end{bmatrix}
      $}};
    \node at (2*1+1.6,2*2) {
      \footnotesize{
      $
      \begin{bmatrix}
        U_{j+1}^{n}\\
        V_{j+1}^{n}
      \end{bmatrix}
      $}};
    \node at (2*1,2*2+1.7) {
      \footnotesize{
      $
      \begin{bmatrix}
        U_{j}^{n+1}\\
        V_{j}^{n+1}
      \end{bmatrix}
      $}};
    \node at (2*1,2*2-1.7) {
      \footnotesize{
      $
      \begin{bmatrix}
        U_{j}^{n-1}\\
        V_{j}^{n-1}
      \end{bmatrix}
      $}};

  %   % mark nodes
  % \fill (-1,5) circle (2pt);
  %   \draw (-1,4.5) circle (2pt);
  %   \draw[->] (-1,5)  --(0,5)   node[right] {$U$ node};
  %   \draw[->] (-1,4.5)--(0,4.5) node[right] {$V$ node};
  \end{tikzpicture}
  \caption{The stencil}
  \label{fig:stencil}
\end{figure}
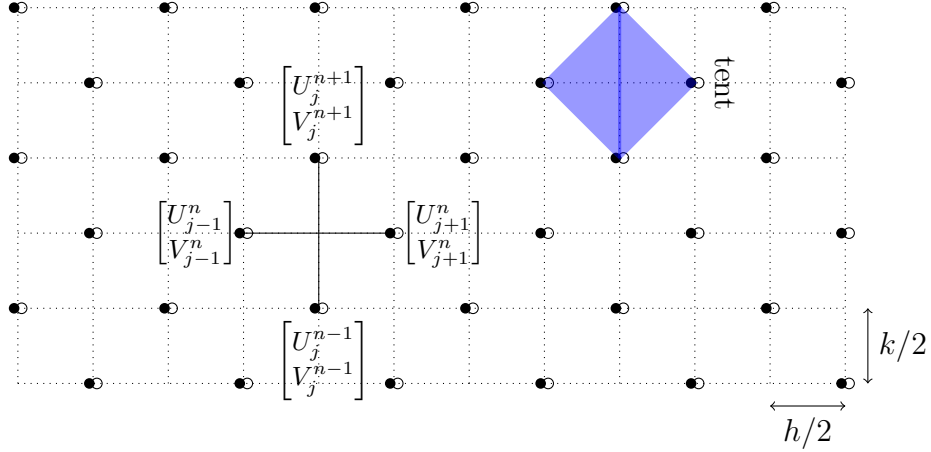

Equation~\eqref{eq:solution} now simplifies to
\begin{subequations}  \label{eq:unifTP}
\begin{align}
  U^{n+1}_j
  & = U^{n-1}_j + a c ( V_{j+1}^n - V_{j-1}^n )
  \\
  V^{n+1}_j
  & = V^{n-1}_j + a c ( U_{j+1}^n - U_{j-1}^n )
\end{align}  
\end{subequations}
where $a = k/h$. This is simply the non-staggered leapfrog scheme (studied
extensively for scalar equations) applied to the first order
system. By a simple Taylor expansion about the stencil center, we see
that the scheme is consistent and that the local truncation error is
of second order (see~\cite{RichtMorto94,Strik89} for definitions of
these and related terminology).

To examine stability, introduce a new vector variable $X_j^n$ and
rewrite the scheme~\eqref{eq:unifTP} as follows:
\begin{align*}
  X^{n+1}_j =
  \begin{bmatrix}
    [X_j^n]_3 + ac [X_{j+1}^n - X_{j-1}^n]_2 \\
    [X_j^n]_4 + ac [X_{j+1}^n - X_{j-1}^n]_1 \\
    [X_j^n]_1 \\
    [X_j^n]_2
  \end{bmatrix}, 
\qquad \text{ where }
X_j^n  =
\begin{bmatrix}
  U_j^n \\ V_j^n\\ U_j^{n-1}\\ V_j^{n-1}
\end{bmatrix}.
\end{align*}
To this one-step scheme, we now 
apply von Neumann analysis~\cite{RichtMorto94,Strik89}. The
amplification matrix $G$, connecting $X_j^{n+1}$ to $X_j^n$ can be
readily calculated:
\[
G=
\begin{bmatrix}
  0 & 2\ii s & 1 & 0 \\
  2\ii s &0 &0& 1 \\
  1 & 0 & 0 & 0\\
  0 & 1 & 0 & 0
\end{bmatrix}
= R \Lambda R^{-1}, \quad\text{ where }\quad
R =
\begin{bmatrix}
1 & 1 & 1 & 1\\
1 & 1 & −1 & −1\\
g^{-1}_1 & g^{-1}_2 & g^{-1}_3 & g^{-1}_4\\
g^{-1}_1 & g^{-1}_2 & -g^{-1}_3 & -g^{-1}_4
\end{bmatrix},
\]
$\ii$ denotes the imaginary unit, the eigenvalues of $G$ are
$g_1 =\ii s - \sqrt{1 - s^2},\; g_2 = \ii s + \sqrt{1 - s^2},\; g_3 =
- \ii s - \sqrt{1 - s^2},\; g_4 = -\ii s + \sqrt{1 - s^2}$,
$\Lambda = \diag(g_i)$, $s = ac \sin(\theta),$ and
$\theta \in [-\pi,\pi]$ gives the frequency in von Neumann
analysis. If
\begin{equation}
  \label{eq:CFL}
  | ac | < 1
\end{equation}
then all eigenvalues satisfy $|g_i|=1$. Furthermore, since
$\det R = 4(g_1^{-1} - g_2^{-1}) (g_4^{-1} -g_3^{-1})$ remains away
from zero whenever~\eqref{eq:CFL} holds, the powers
$G^n = R \Lambda^n R^{-1}$ are uniformly bounded for all $n$ and all
$\theta$. Hence~\eqref{eq:CFL} implies that the scheme is stable.

We thus conclude, by the Lax-Richtmyer theorem, that the scheme is
convergent and is of second order.  Note that the CFL condition we
previously found on general meshes, namely~\eqref{eq:cfl-2}, when
restricted to uniform meshes, gives exactly the same CFL
condition~\eqref{eq:CFL} obtained above from von Neumann analysis.

\bigskip

\section{Numerical results}   \label{sec:numerical}

\subsection{Convergence study}

\begin{figure}
  \centering
  \begin{subfigure}{0.45\textwidth}
    % made using code/figure1.m 
    \includegraphics[width=\textwidth]{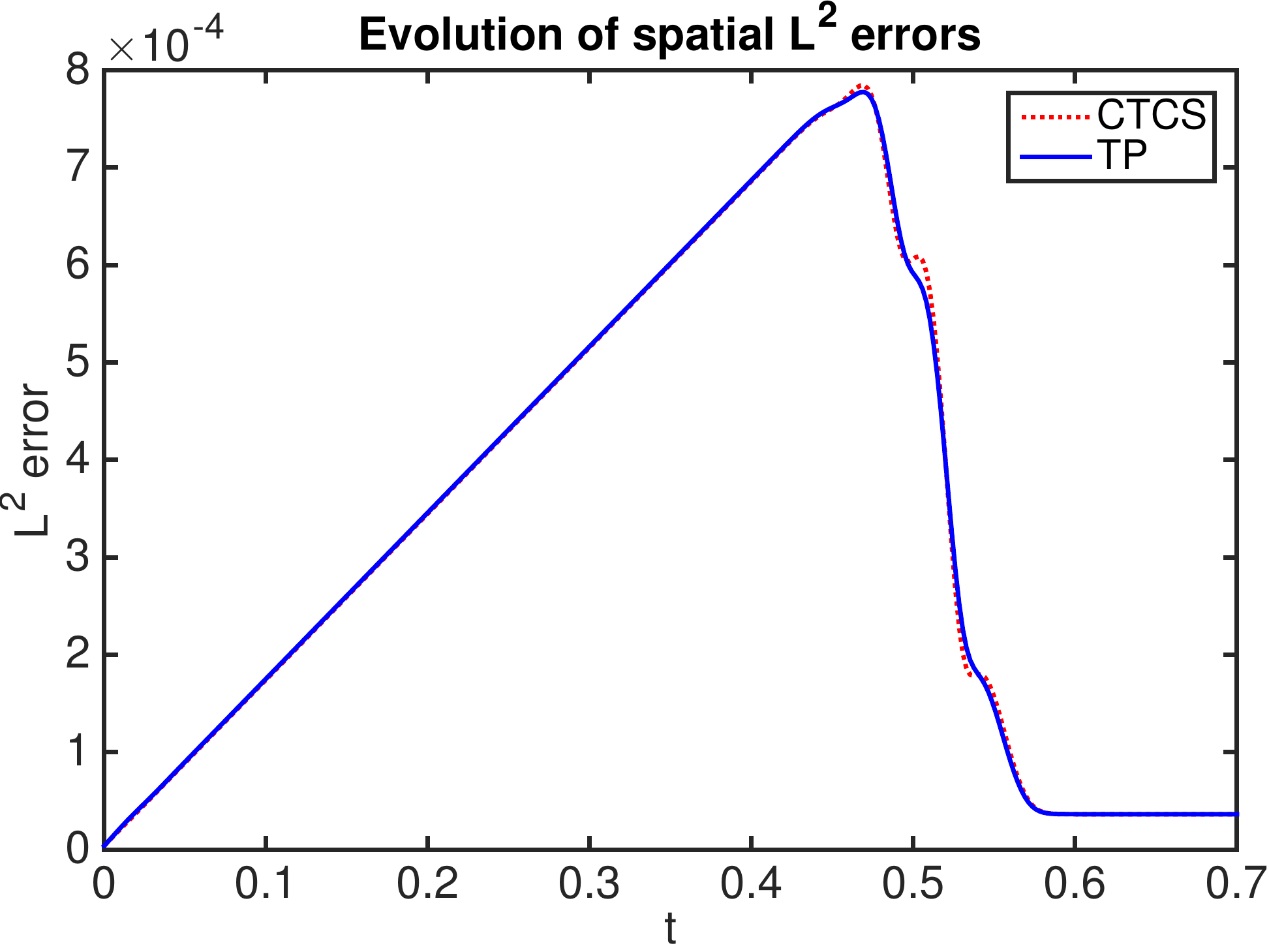}
    \caption{Errors as a function of time}
    \label{fig:compare-evolve}
  \end{subfigure}
  \begin{subfigure}{0.45\textwidth}
    % made using code/figure2.m 
    \includegraphics[width=\textwidth]{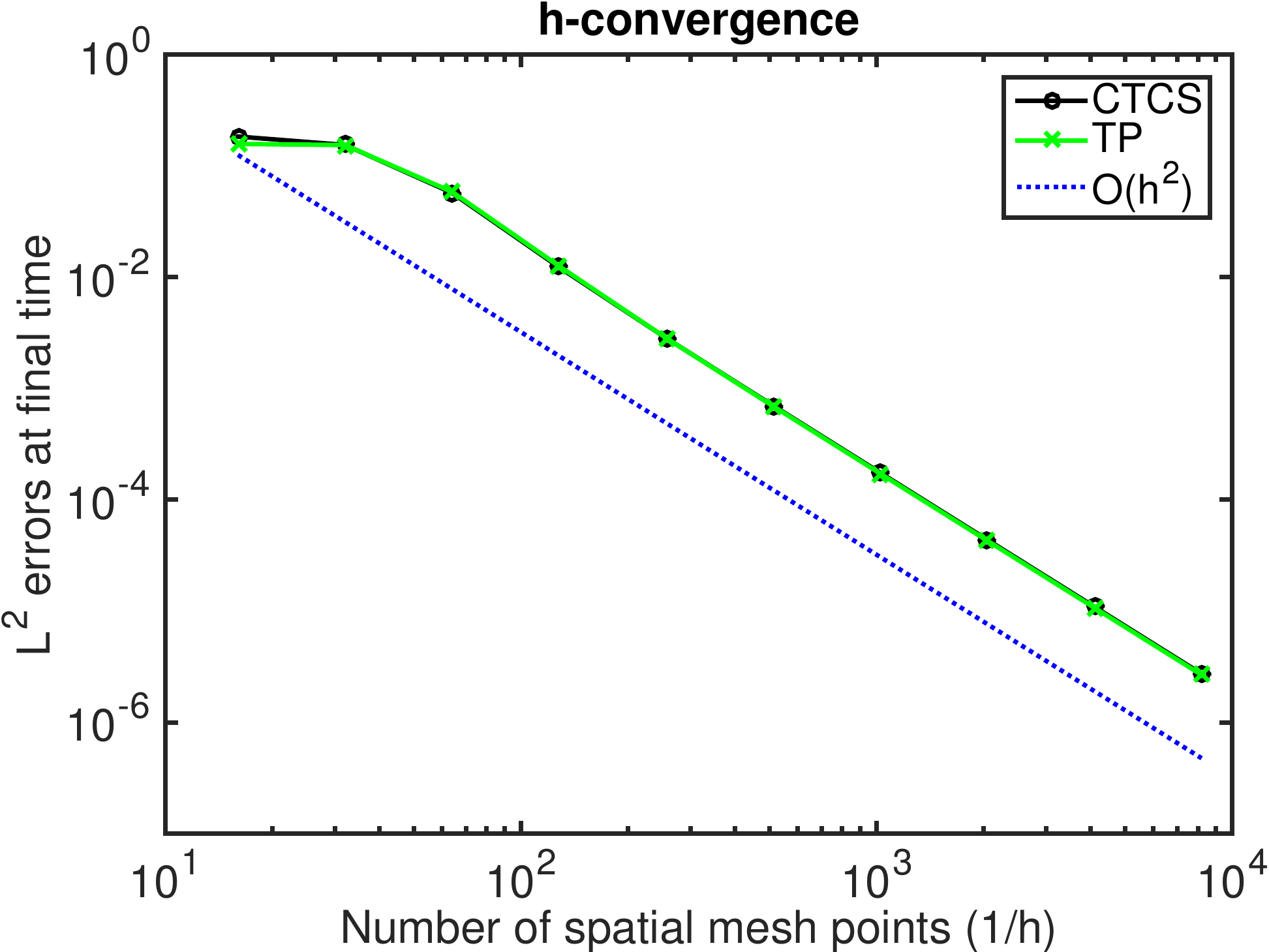}
    \caption{Second order convergence}
    \label{fig:compare-convergence}
  \end{subfigure}
  \caption{Comparison with CTCS scheme}
  \label{fig:compare}
\end{figure}

First, we report numerical results from our tent pitching (TP) scheme
and compare it with the well-known ``central-time central-space''
(CTCS) finite difference scheme
(see~\cite{CouraFriedLewy28,RichtMorto94}, sometimes also known as the
Yee scheme~\cite{Gusta08,Yee66}).  The only difference between the two
is that while the TP scheme sets the $U$ and $V$ nodes on the same
location (exactly as indicated in Figure~\ref{fig:stencil}), the CTCS
scheme sets them on staggered locations on the same grid. Both schemes
are applied to the model problem~\eqref{eq:theproblem} on uniform
grids with $S=1$. We use a grid like that in Figure~\ref{fig:stencil}
for both methods.

To impose the outgoing impedance boundary conditions within the CTCS
scheme, we use the standard finite difference technique of introducing
ghost points to the left and right of the finite grid and eliminating
the unknown values at those points using the boundary condition. In
contrast, in the TP scheme, the impedance boundary
conditions are essentially imposed within the finite element spaces,
as we have already seen previously. We set $c=1$ and impose the
initial condition so that the exact solution is
\[
u_1(x,t) = u_2(x,t) = e^{-1000 ( (x+t) - 1/2)^2 },
\]
i.e., the solution is a smooth pulse moving to the left at unit speed,
eventually clearing out of the simulation domain. At every other time
step (in the uniform space-time grid) we compute the $L^2(0,1)$-norm
of the difference between the computed and exact solution. The
evolution of these errors in time on a grid of spatial mesh size
$h=0.0025$ and $k = 0.9 h$ is shown in
Figure~\ref{fig:compare-evolve}.

We observe from Figure~\ref{fig:compare-evolve} that the errors of
both methods are comparable and remain low throughout the simulated
time.  Note also that after the pulse clears the simulation domain
reflectionlessly (and the solution within $[0,1]$ vanishes), the errors
for both methods decrease markedly. In Figure~\ref{fig:compare}, we
display a log-log plot of the $L^2(0,1)$-norm of the errors at $t=0.5$
for $h=1/2^3, \ldots, 1/2^{13},$ and $k = 0.9 h$. The rate of decrease
of this error is clearly seen to be of the order $O(h^2)$. This is in
accordance with our von Neumann analysis of \S~\ref{ssec:unifanal}
(although we did not take into account boundary conditions in that
analysis).

Thus we conclude from Figure~\ref{fig:compare} that there is
negligible difference between the performance of the two methods on
uniform grids.

\subsection{Material interfaces and other boundary conditions}

Next, we consider a generalization of~\eqref{eq:theproblem} given by 
\begin{subequations}  \label{eq:wavegen}
  
\begin{align}
  \label{eq:wavegen-pde}
  \d_t 
  \begin{bmatrix}
    \kappa_1 & 0 \\
    0 & \kappa_2
  \end{bmatrix}
  \begin{bmatrix}
    u_1 \\ u_2 
  \end{bmatrix}
  - 
  \begin{bmatrix}
    0 & c \\
    c & 0 
  \end{bmatrix}
  \d_x 
  \begin{bmatrix}
    u_1 \\ u_2 
  \end{bmatrix}
       & = f ,
      && 0< x< 1 , 0< t < T,
 \\
  u_1(x,0) & = u_1^0(x), && 0< x< 1,
\\
  u_2(x,0) & = u_2^0(x), && 0< x< 1,
  \\
  \label{eq:wavegen-bc0}
  z_0 u_1 - u_2 & = 0,  && x=0, \; 0< t < T,
  \\                          
  \label{eq:wavegen-bc1}
  z_1 u_1 + u_2 & = 0,  && x=1, \; 0< t < T.
\end{align}
\end{subequations}
where $\kappa_1(x)$ and $\kappa_2(x)$ are time-independent material
parameters and $c$, $z_0$ and $z_1$ are constants.  Such systems arise from
electromagnetics or acoustics~\cite{Gusta08,LeVeq02} on layered media
and the differential equation is often written in the following
equivalent, but non-symmetric  form
\[  
\d_t 
  \begin{bmatrix}
    u_1 \\ u_2 
  \end{bmatrix}
  - 
  \begin{bmatrix}
    0 & \beta_1 \\
    \beta_2 & 0 
  \end{bmatrix}
  \d_x 
  \begin{bmatrix}
    u_1 \\ u_2 
  \end{bmatrix}
       = 
       \tilde f 
\]
where $\beta_i(x) = c/\kappa_i(x)$ and
$\tilde f = \diag( \kappa_1^{-1},\kappa_2^{-1}) f$ obtained by scaling
the equations of~\eqref{eq:wavegen-pde} by $\kappa_1^{-1}$ and
$\kappa_2^{-1}$.  When $\kappa_1(x) \equiv \kappa_2(x)\equiv 1$ and
$z_0 = z_1 =1$, we obtain the model formulation we discussed
previously in detail.  Dirichlet boundary conditions can be imposed by
putting $z_0 = z_1 =0$, while exact outgoing impedance conditions can
be imposed using $z_0= \sqrt{\kappa_1/\kappa_2}$ and
$z_1=\sqrt{\kappa_1/\kappa_2}$. Intermediate values of $z_i$ give
damped impedance boundary conditions.

Whenever $\kappa_i$ is a constant on each spatial mesh interval, a
tent pitching scheme is suggested by a simple generalization of the
previous algorithm for homogeneous media. We define the discrete spaces 
exactly as in~\eqref{eq:spaces}, but noting  that $V(\om)$ now
has different essential boundary conditions -- stemming from
~\eqref{eq:wavegen-bc0}--\eqref{eq:wavegen-bc1} -- which are  inherited
by the spaces on tents with its tent pole on the boundary.  The
generalization of the scheme is derived by merely setting the $A$ and
$\Dd$ in~\eqref{eq:20} by
\[
A = 
  \begin{bmatrix}
    \kappa_1 & 0 \\
    0 & \kappa_2
  \end{bmatrix}
  \d_t 
  - 
  \begin{bmatrix}
    0 & c \\
    c & 0 
  \end{bmatrix}
  \d_x, 
\qquad 
\Dd = 
\begin{bmatrix}
  n_t \kappa_1 & -c n_x \\
  -c n_x & n_t \kappa_2
\end{bmatrix}.
\]
Note that this $A$, appearing on the left hand side
of~\eqref{eq:wavegen-pde}, satisfies~\eqref{eq:T}.  By solving this
general version of~\eqref{eq:20} one can obtain propagation formulas
similar to~\eqref{eq:solution}, but we omit these details and report
only the numerical results.

\begin{figure}  %% Plots generated from figure3.m
  \centering
  \begin{subfigure}{0.4\textwidth}
    \includegraphics[trim=0in 0.5in 0in 0.2in, clip, width=\textwidth]{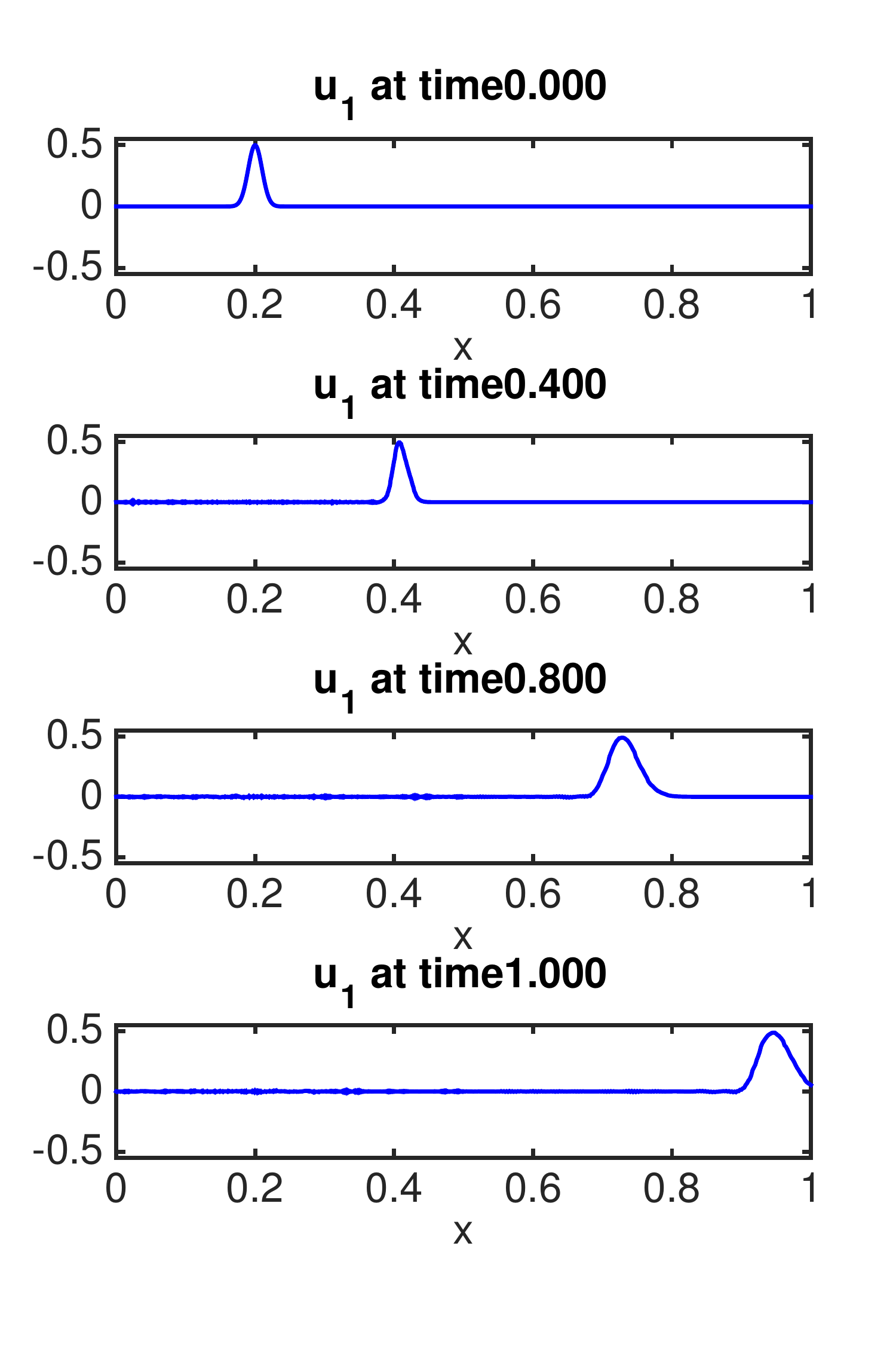}
    \caption{Time snapshots of $u_1$ (cf.\ whole 
      space-time plot of $u_1$
      in Figure~\ref{fig:patch1})}
    \label{fig:snaps1}
  \end{subfigure}\quad
  \parbox{0.49\textwidth}{
  \begin{subfigure}{0.5\textwidth}
    \includegraphics[width=\textwidth]{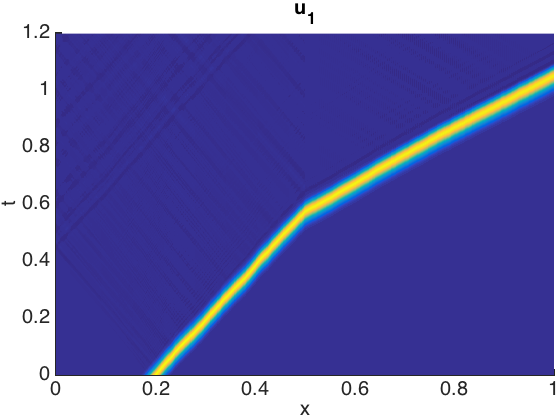}
    \caption{Entire space-time plot (over all time slabs) of the $u_1$-component 
      of the solution}
    \label{fig:patch1}
  \end{subfigure}
  \begin{subfigure}{0.5\textwidth}
    \includegraphics[trim=1in 2.5in 1in 1.8in, clip, width=\textwidth]
                   {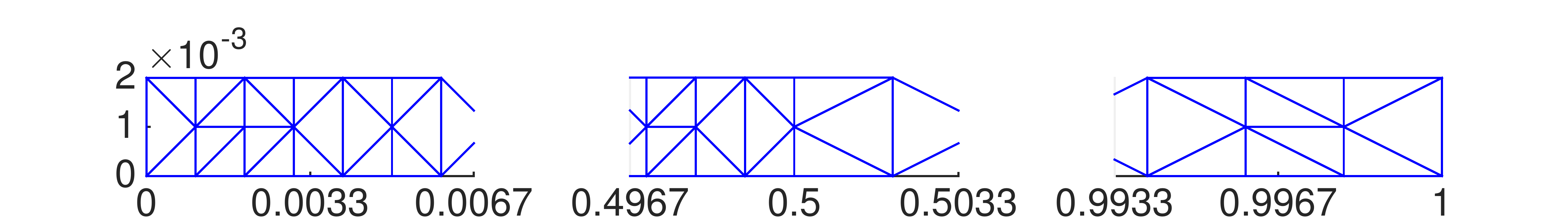}
    \caption{Parts of the tent pitched mesh of the initial 
      time slab, with varying spatial mesh sizes 
      in the regions  $x<0.5$ and $x>0.5$.}
    \label{fig:mesh1}
  \end{subfigure}
  }
  \caption{Wave propagation through an impedance-matched interface}
  \label{fig:impedancematched}
\end{figure}

First we consider the case 
\[
\kappa_1 = 
\left\{
  \begin{matrix}
    2, &&  0< x < 1/2,
    \\
    1, &&  1/2 < x < 1,
  \end{matrix}
\right.
\qquad
\kappa_2 = 
\left\{
  \begin{matrix}
    2, &&  0< x < 1/2,
    \\
    1, &&  1/2 < x < 1,
  \end{matrix}
\right.
\]
and $c=1$. The wave speed (equalling $c / \sqrt{ \kappa_1 \kappa_2}$), jumps from
$0.5$ in the left half to $1$ in the right half.  However, the
impedance (equalling $\kappa_1/\kappa_2$ -- see~\cite{LeVeq02}) is one
in both regions. Thus $x=0.5$ is an impedance-matched interface about
which we do not expect to see any reflection.

We use the tent pitching method to simulate a wave propagating to the
right starting near $x=0.2$. To this end, define a smooth pulse
$g(x) = e^{-5000(x-0.2)^2 }$ and set the data in~\eqref{eq:wavegen} by
\begin{equation}
  \label{eq:16}
f=0, \quad 
u_1^0 (x) = (c/\kappa_1) g(x), 
\quad 
u_2^0 (x) = -(c/\sqrt{\kappa_1\kappa_2}) g(x),  
\end{equation}
and $z_0= \sqrt{\kappa_1/\kappa_2}$ and
$z_1=\sqrt{\kappa_1/\kappa_2}$. We use a spatial mesh of mesh size
$h=10^{-3}$ in the left half and $h = 2\times 10^{-3}$ in the right
half. A simple tent meshing algorithm then produces a mesh of
space-time tents based on this non-uniform spatial mesh that satisfies
the CFL condition~\eqref{eq:cfl-2}. The meshing algorithm proceeds as
illustrated as in Figure~\ref{fig:tents} by simply picking a point
with the lowest time coordinate to pitch a tent. When multiple
locations have the minimal time coordinate, the algorithm picks a
tent pitching location among them randomly, thus giving an unstructured
mesh.  To minimize the overhead in constructing the mesh of tents,
instead of meshing the entire space-time domain at once, we first mesh
a thin time slab $\{ (x,t): 0<t<0.002, \; 0< x< 1\}$ and then
repeatedly stack this mesh in time to cover the entire region of time
simulation. The mesh of the initial slab is shown in
Figure~\ref{fig:mesh1}.

One of the two components of the computed solution is shown in the
remaining two plots of Figure~\ref{fig:impedancematched}. Clearly, the
simulated wave packet travels left across the $x=0.5$ interface
without any reflected wave and expands as it enters the region of
higher wave speed.  In further (unreported) numerical experiments, we
have noticed changes in the discrete wave speed depending on the
space-time mesh. For example, the wave speed differs if one uses
uniform space time meshes with positively sloped diagonals only or
negatively sloped diagonals only. Such wave speed differences appear
to approach to zero slowly as $h$ is made smaller. High order methods
may be needed to reduce these dispersive errors.

\begin{figure}[t]  %% Plots generated from figure4.m
  \begin{center}
  \begin{subfigure}{\textwidth}
    \centering         %    l     b    r   t 
    \includegraphics[trim=0.5in 2in 1in 2.in, clip, width=0.8\textwidth]
           {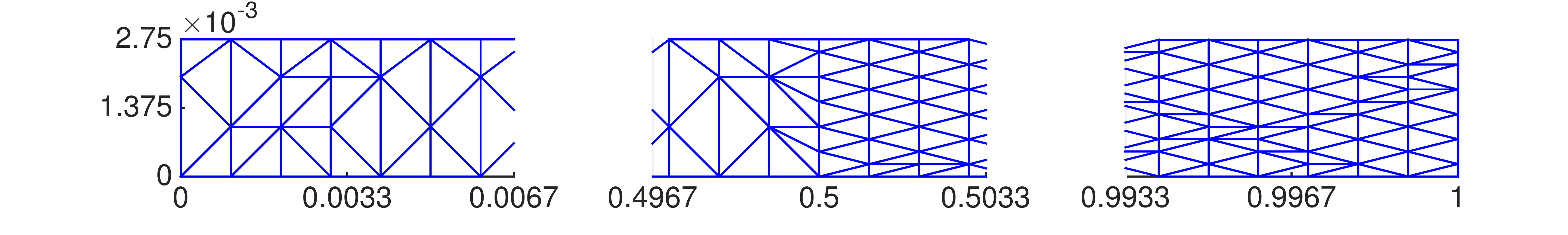}
    \caption{Left, middle and right parts of the tent pitched mesh on one time slab}
    \label{fig:mesh2}
  \end{subfigure}
  \\
  \begin{subfigure}{\textwidth}
    \centering
    \includegraphics[width=0.9\textwidth]{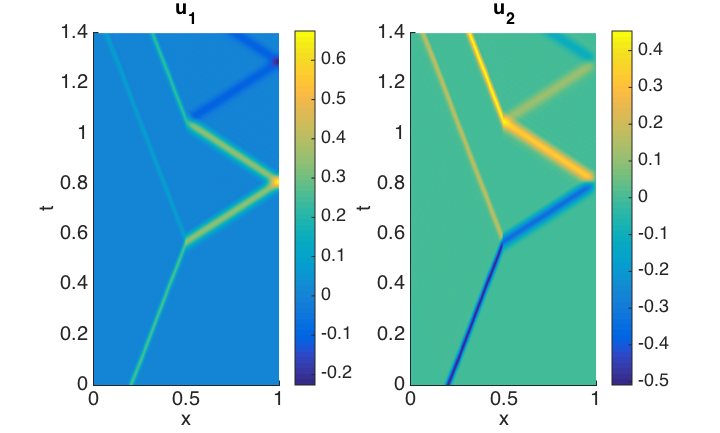}
    \caption{Space-time plot of the two solution components computed
      by explicit tent pitching}
    \label{fig:patch2}
  \end{subfigure}
  \caption{Case of reflected and transmitted waves}
  \label{fig:mismatched}
  \end{center}
\end{figure}

Our next and final example involves an interface where we expect both
reflection and transmission. We set $c=1$ and 
\[
\kappa_1 = 
\left\{
  \begin{matrix}
    4, &&  0< x < 1/2,
    \\
    1/2, &&  1/2 < x < 1,
  \end{matrix}
\right.
\qquad
\kappa_2 = 
\left\{
  \begin{matrix}
    1, &&  0< x < 1/2,
    \\
    1/2, &&  1/2 < x < 1,
  \end{matrix}
\right.
\]
Both the wave speed and the impedance jumps from the left region to
the right region (from 0.5 and 4 to 2 and 1, respectively). We set $f$
and initial data as in the last simulation by~\eqref{eq:16}, but in
order to impose Dirichlet boundary condition, we set $z_0=z_1=0$.
This time, instead of using a non-uniform mesh, we use a spatially
uniform mesh of $h=10^{-3}$ and let the tent pitching algorithm
adjust~$k$ to satisfy the CFL condition~\eqref{eq:cfl-2} in each
tent. We found that the mesh obtained, displayed in
Figure~\ref{fig:mesh2}, while not ideal due to the thin triangles, is
adequate for the simulation. (Better tent pitched meshes can be
obtained using non-uniform spatial mesh spacing, as we saw in the
previous example and Figure~\ref{fig:mesh1}.) The solution components
$u_1$ and $u_2$ obtained from the simulation are displayed in
Figure~\ref{fig:patch2}. The computed waves are transmitted as well as
reflected both from the interface and the Dirichlet boundaries. The
expected features of the solution are therefore recovered by the
method.

\end{document}